\newcommand{\field}[1]{\mathbb{#1}}
\newcommand{\Z}{\field{Z}}
\newcommand{\R}{\field{R}}
\newcommand{\Prob}{\mathbf{P}}
\newcommand{\calC}{\mathcal{C}}
\newcommand{\calR}{\mathcal{R}}
\newcommand{\calS}{\mathcal{S}}
\newcommand{\Dk}{\mathcal{D}_k}
\newcommand{\Jk}{\mathcal{J}_k}
\newcommand{\tilHk}{\widetilde{H}_k}
\newcommand{\Shell}{\text{Shell}}
\theoremstyle{plain}
\newtheorem{theorem}{Theorem}
\newtheorem*{theorem*}{Theorem}
\newtheorem{lemma}[theorem]{\textbf{Lemma}}
\newtheorem{proposition}[theorem]{\textbf{Proposition}}
\newtheorem{corollary}[theorem]{\textbf{Corollary}}
\newtheorem*{conjecture*}{Conjecture}
\theoremstyle{definition}
\newtheorem{definition}[theorem]{Definition}
\theoremstyle{remark}
\newtheorem*{remark}{Remark}
\numberwithin{theorem}{section} \numberwithin{equation}{section}
\renewenvironment{proof}[1][Proof]{\begin{trivlist}
\item[\hskip \labelsep {\bfseries #1:}]}{\qed\end{trivlist}}
\begin{document}


\bigskip

\bigskip

\bigskip

\title[Improved bounds for generalized bootstrap percolation]{Improved bounds on metastability thresholds and probabilities for generalized bootstrap percolation}

\author{Kathrin Bringmann}
\address{Mathematical Institute\\University of
Cologne\\ Weyertal 86-90 \\ 50931 Cologne \\Germany}
\email{kbringma@math.uni-koeln.de}
\author{Karl Mahlburg}
\address{Department of Mathematics \\
Princeton University \\
NJ 08544\\ U.S.A.} \email{mahlburg@math.princeton.edu}
\date{\today}
\subjclass[2000]{05A17, 11P82, 26A06, 60C05, 60K35}

\thanks{The authors thank the Mathematisches Forschungsinstitut Oberwolfach for hosting this research through the Research in Pairs Program.  The first author was partially supported by NSF grant DMS-0757907 and by the Alfried Krupp prize.
The second author was supported by an NSF Postdoctoral Fellowship
administered by the Mathematical Sciences Research Institute through
its core grant DMS-0441170. }

\begin{abstract}
We generalize and improve results of Andrews, Gravner, Holroyd,
Liggett, and Romik on metastability thresholds for generalized
two-dimensional bootstrap percolation models, and answer several of their open
problems and conjectures.  Specifically, we prove slow convergence
and localization bounds for Holroyd, Liggett, and Romik's
$k$-percolation models, and in the process provide a unified and
improved treatment of existing results for bootstrap, modified
bootstrap, and Frob\"ose percolation. Furthermore, we prove improved
asymptotic bounds for the generating functions of partitions without
$k$-gaps, which are also related to certain infinite probability
processes relevant to these percolation models.

One of our key technical probability results is also of independent
interest.  We prove new upper and lower bounds for the probability
that a sequence of independent events with monotonically increasing
probabilities contains no ``$k$-gap'' patterns, which interpolates
the general Markov chain solution that arises in the case that all
of the probabilities are equal.
\end{abstract}

\maketitle

\section{Introduction and statement of results}
In \cite{HLR}, Holroyd, Liggett, and Romik considered two-dimensional {\it
$k$-percolation} models for $k \geq 2$.  In this cellular automaton
model, an initial configuration on $\Z^2$ is randomly determined by
independently setting each site to be active with probability $1-q$,
or empty (inactive) with probability $q$.  Throughout we will write
$q := e^{-s}$.  Active sites always remain active, and the system
evolves by following a {\it threshold} growth rule: if $N_k(x)$ contains
at least $k$ active sites, then $x$ becomes active, where the
neighborhood is the $(k-1)$-cross given by
\begin{equation}
\label{E:Nk} N(x) = N_k(x) := \left\{x + w \; : \; w = (v,0) \text{
or } (0,v), -(k-1) \leq v \leq (k-1), \: v \neq 0\right\}.
\end{equation}
The case $k=2$ is the widely studied {\it bootstrap percolation}
model \cite{AL, Hol03}.  Other commonly studied models include {\it
modified} bootstrap percolation, as well as the {\it Frob\"ose}
model \cite{ASA, AL, Hol03}, which are frequently treated in
conjunction with bootstrap percolation.  Indeed, we will see
throughout this paper that these latter two models can be
fundamentally thought of as being associated with the ``degenerate''
$k=1$ case.

One of the important questions in the study of such models is
whether a given initial configuration will eventually fill the
entire plane with active sites (if not, then there exist sites that
will never become active).  In fact, in light of Schonmann and van Enter's proofs
that the critical probability for percolation on the plane is zero \cite{Sch, vanE}, this question is more properly
asked on finite square regions instead.  A seminal paper by
Holroyd \cite{Hol03} showed that there is a precise {\it metastability
threshold} for bootstrap percolation, which means that the most
interesting (critical) behavior occurs when the probability $q$ and the
side-length $L$ satisfy an exponential relationship.  The limiting
threshold comes into play as $s \to 0$, $q \to 1$, and,
simultaneously, $L \to \infty$  (the rate of exponential scaling is
the {\it critical exponent}).  It should also be noted that Holroyd's
results built upon earlier work of Aizenman and Lebowitz \cite{AL}
that proved the existence of (possibly unequal) upper and lower
threshold bounds.

Subsequently, Holroyd, Liggett, and Romik also exactly described the critical
exponents for each of the $k$-percolation models.  These models form an infinite
family of two-dimensional models that vary in the {\it neighborhood} aspect, but one can also consider percolation
models with similar neighborhoods that vary in {\it dimension}.  This is the subject of recent and ongoing work of Holroyd \cite{Hol06}, and also Balogh, Boll\'obas, and Morris \cite{BBM10, BBM09}, who have found precise critical exponents
for nearest neighbor bootstrap percolation models in all dimensions.

Such metastability thresholds can be further understood through the study of ``localized'' models,
where all active sites must emanate from a fixed initial location
(typically the origin).  In particular, once the global critical exponent is known,
a better understanding of localized growth can lead to refined estimates for the
rate of convergence in the limit.  Gravner and Holroyd used this approach to prove slow convergence
estimates for bootstrap percolation in \cite{GH09}.

In this paper we  generalize Gravner and Holroyd's work to all $k$-percolation models.  We define a local version of
$k$-percolation that has three possible states for each cell: {\it
active}, {\it occupied}, or {\it empty.} An initial configuration
$\calC$ is generated by letting the origin be active  with
probability $1-q$ and empty with probability $q$; all other sites
are either occupied with probability $1-q$ or empty with probability
$q$. Throughout we denote the corresponding probability measure by $\Prob$.

If $k > 1$, then the growth rules are the following:
\begin{itemize}
\item
An occupied site becomes active if there is at least one active site
within $\ell^1$-distance
 $k$.
\item
An empty site $x$ becomes active if there are at least $k$ active
sites in $N_k(x)$.
\end{itemize}
For $k=1$, there are two different models. For the modified local
model we have the following rules:
\begin{itemize}
\item
Each occupied site becomes active if there is at least
one active site within $\ell^\infty$-distance $1$.
\item
An empty site $x$ becomes active if there is at least
one active site in each of $\{x \pm (0,1)\}$ and $\{x \pm (1,0)\}$.
\end{itemize}
For the Frob\"ose local model we have the rules:
\begin{itemize}
\item
Each occupied site becomes active if there is at least
one active site within $\ell^1$-distance $1$.
\item
 An empty site $x$ becomes active if there are two
active sites as described in the second rule of the  modified case,
and if the cell in the ``corner'' between the two sites is also
active.
\end{itemize}

The results on metastability thresholds are closely related to the
concept of {\it indefinite growth} in localized percolation models,
which means that every site in $\Z^2$ eventually becomes active.
For the remainder of this paper we focus only on this perspective.
Along these lines, Gravner and Holroyd proved the following bounds
for local bootstrap percolation, modified bootstrap percolation, and
Frob\"ose percolation.
\begin{theorem*}[Theorem 1 in \cite{GH09}]
If $k = 1$ (modified model) or $2$, then there exist positive
constants $c_1, c_2,s_0$ such that for $s<s_0$, we have
\begin{equation*}
\exp\left(-2\lambda_ks^{-1}+c_1 s^{-\frac{1}{2}}\right)\leq
\Prob(\text{indefinite growth})\leq \exp\left(-2\lambda_ks^{-1} +c_2
s^{-\frac{1}{2}}\left(\log s^{-1}\right)^{3}\right).
\end{equation*}
If $k = 1$ (Frob\"ose model), then the upper bound has the power
$\left(\log s^{-1}\right)^2$ instead.  Here the threshold constants
are given as
\begin{equation*}
\lambda_k:=\frac{\pi^2}{3k(k+1)}.
\end{equation*}
\end{theorem*}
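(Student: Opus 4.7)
The plan is to follow the hierarchical rectangle framework that Aizenman--Lebowitz and Holroyd developed for the global percolation problem, specialized to the localized setting. Call a rectangle $R$ \emph{internally spanned} if the restriction of $\calC$ to $R$ eventually fills $R$ with active sites under the local $k$-percolation rules. Then indefinite growth from the origin is (up to probability negligible on the relevant scale) equivalent to the existence of an infinite increasing sequence of internally spanned rectangles $R_1 \subset R_2 \subset \cdots$ all containing the origin. I would first bound $\Prob(\text{indefinite growth})$ above and below by products of the one-step conditional ``rectangle growth'' probabilities, since each $R_{j+1}$ is obtained from $R_j$ by adjoining rows and columns that are triggered by $R_j$.

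The next step is a one-step analysis: the conditional probability of adjoining a new row of length $a$ to a rectangle of height $b$ is (in each of the three models) the probability that a certain length-$a$ Bernoulli sequence with parameter $1 - q^b$ avoids a ``$k$-gap'' pattern of inactive sites, so that the already-active row below can trigger every cell in the new row. Writing $q = e^{-s}$ and letting $g(a,b)$ denote the logarithm of this probability, telescoping across scales gives
\begin{equation*}
\log \Prob(\text{indefinite growth}) \sim \sum_{(a,b) \in \gamma} g(a,b)
\end{equation*}
for an optimal growth path $\gamma$. Approximating the sum by an integral and optimizing the aspect ratio yields the leading term $-2\lambda_k s^{-1}$, where the constant $\lambda_k = \pi^2/(3k(k+1))$ is a dilogarithmic evaluation of $\int_0^\infty \log g_k(e^{-z})\,dz$ for the appropriate generating function $g_k$ for partitions without $k$-gaps.

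For the lower bound, I would fix a single explicit growth path along a geometric sequence of rectangles near the optimal aspect ratio and use Euler--Maclaurin (together with the sharp near-$q=1$ asymptotics for $g_k$, which the paper establishes separately) to control the Riemann-sum error by $O(s^{-1/2})$, producing the $c_1 s^{-1/2}$ correction. For the upper bound I would combine the one-step bounds with a union bound over all admissible hierarchies of internally spanned rectangles, using a cutoff at rectangle side lengths of order $s^{-1} \log s^{-1}$ (above this scale the contribution is negligible by Aizenman--Lebowitz-type estimates). The combinatorial entropy of choosing the hierarchy at each scale, together with the cutoff, is what produces a $(\log s^{-1})^3$ correction in the bootstrap and modified cases, with the exponent dropping to $2$ in the Frob\"ose case because the diagonal ``corner'' requirement eliminates one degree of freedom in the hierarchy.

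The main obstacle will be the uniform control of the one-step growth probabilities $g(a,b)$ as $a$ varies along the path, since the effective Bernoulli parameter $1 - q^b$ shifts with the current rectangle size rather than being constant. In the classical Markov-chain setting one gets explicit eigenvalue asymptotics, but here the transition probabilities increase monotonically along the path; the technical ``$k$-gap with monotone probabilities'' estimate that is the independent-interest result of the paper is what upgrades the Markov approximation to bounds tight enough to preserve the $s^{-1/2}$ scale of the correction. Absorbing the entropic and cutoff losses into this bound while not degrading the $s^{-1/2}$ rate is the delicate point.
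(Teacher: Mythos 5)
Your overall architecture — a one-step ``row/column triggers'' analysis driving the $g_k$-integral computation of $\lambda_k$, combined with the monotone $k$-gap estimate — matches the skeleton of what the paper does in Sections 4 and 5 (and what Gravner--Holroyd did). But your proposed lower-bound mechanism cannot produce the stated inequality, and this is the crux of the theorem. You say you would ``fix a single explicit growth path along a geometric sequence of rectangles'' and control the Riemann-sum discrepancy by Euler--Maclaurin to get the $c_1 s^{-1/2}$ correction. A single growth path yields a lower bound of the form $\exp(-2\lambda_k s^{-1} - O(\cdot))$; the deterministic Riemann/Euler--Maclaurin error acts in the wrong direction and can at best be controlled, not turned into a \emph{positive} $+c_1 s^{-1/2}$ term in the exponent. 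The whole point of Gravner--Holroyd's lower bound (reproduced and generalized here in Section \ref{S:Lower}) is a \emph{union} over roughly $\exp(c\,s^{-1/2})$ pairwise-disjoint growth events, obtained by inserting ``skew'' growth steps $\Jk(a_i,b_i)$ at freely chosen locations between stretches of diagonal growth $\Dk$; each such event has probability $\gtrsim \exp(-2\lambda_k s^{-1})$ up to a manageable loss, and the combinatorial count over the $\binom{\approx s^{-1}}{m}$ placements of $m \approx M s^{-1/2}$ skew windows (equation \eqref{E:choose-fct}) is what supplies the positive factor $\exp(c_1 s^{-1/2})$. Without that enumeration of disjoint events, the $+c_1 s^{-1/2}$ is unreachable.

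Two smaller issues worth flagging. First, your upper-bound plan is phrased entirely in terms of internally spanned rectangles and a union bound over hierarchies, which is the classical Aizenman--Lebowitz/Holroyd route; this paper deliberately abandons internally spanned rectangles in favor of ``rectangle growth sequences'' and ``good sequences'' (Definitions in Section \ref{S:Uppercomb}), precisely because the internally-spanned bookkeeping does not generalize cleanly to larger $k$ and is tighter than what the row/column probability bounds actually use. Your plan may well recover GH09's $(\log s^{-1})^3$ for $k=1,2$, but it is not the paper's argument. Second, your description of $\lambda_k$ as ``a dilogarithmic evaluation of $\int_0^\infty \log g_k(e^{-z})\,dz$ for the appropriate generating function $g_k$'' conflates the partition generating function with the function $g_k(z) = -\log f_k(e^{-z})$; the correct evaluation is $\lambda_k = \int_0^\infty g_k(z)\,dz$ (Lemma \ref{L:gkproperties}\ref{L:gk:integral}).
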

\begin{remark}
Our notation loosely follows that of \cite{And05}, and many of the
results in the prior literature are stated in terms of the
probability parameter $q$ rather than the exponential parameter $s$.
However, all such statements are equivalent, since  $q = s + O(s^2)$
as $s \to 0$.
\end{remark}

Gravner and Holroyd conclude their papers on local percolation by posing several
questions, which include the following problems:
\begin{itemize}
\item
Extend the results to other bootstrap percolation models for which
sharp thresholds are known to exist, including bootstrap
$k$-percolation (question (iii) of \cite{GH09}).
\item
Is a power of $\log s^{-1}$ in the upper bound really necessary
(question (i) of \cite{GH08})?
\end{itemize}

We solve the first of these problems by proving a new, unified
result for all $k \geq 1$, which includes both the modified and
Frob\"ose models in the $k = 1$ case.  We also make progress on the
second question by showing that the power of the logarithm in the
upper bound is at most $5/2$, which improves the power of $3$ found
in Gravner and Holroyd's results for $k=1$ and $2$ (we reduce the
power from $2$ to $1$ in the Frob\"ose model).
\begin{theorem}
\label{T:maintheorem} For each $k \geq 1$, there exists a
sufficiently small $n_0$, and constants $c_1, c_2$ such that for $s>
n_0$
 we have
\[
\exp\left(-2\lambda_ks^{-1}+c_1 s^{-\frac{1}{2}}\right)\leq
\Prob(\text{indefinite growth})\leq \exp\left(-2\lambda_ks^{-1} +c_2
s^{-\frac{1}{2}}\left(\log s^{-1}\right)^{\frac{5}{2}}\right).
\]
For the $k=1$ Frob\"ose model, the upper bound has the power $\log
s^{-1}$ instead.
\end{theorem}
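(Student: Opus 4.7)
My plan is to follow the shell-growth strategy pioneered by Holroyd \cite{Hol03} and refined by Gravner and Holroyd \cite{GH09}, but now in a unified form that covers all $k \geq 1$. Indefinite growth from the origin forces the appearance of successively larger internally spanned rectangles (shells), and the event that a rectangle of side $L$ extends to a rectangle of side $L+1$ amounts to the statement that a particular column or row of the neighbouring configuration avoids a ``$k$-gap'' pattern in the sense of the key technical result advertised in the abstract. Thus the first reduction is to express $\Prob(\text{indefinite growth})$ as (a sum or supremum of) products of single-shell growth probabilities, each of which is controlled by the probability that a sequence of independent events with monotonically increasing probabilities contains no $k$-gap.

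Next I would plug in the sharp asymptotics for these $k$-gap-avoidance probabilities. These are governed by the generating functions for partitions without $k$-gaps, whose logarithm has leading behaviour $-\lambda_k / s$ with $\lambda_k = \pi^2/(3k(k+1))$; this is precisely where the threshold constant originates, and matches the constants found for $k=1,2$ in \cite{GH09}. For the lower bound I would exhibit a specific deterministic nucleation-plus-growth strategy: a nucleus of probability $\exp(-\Theta(s^{-1/2}))$, followed by shells whose widths grow from order $s^{-1}$ outward, with each shell contributing a factor captured by the new lower bound on $k$-gap-free probabilities. Multiplying these factors and using the Euler-Maclaurin-type estimates for partition sums yields $\exp(-2\lambda_k s^{-1} + c_1 s^{-1/2})$, with the factor $2$ arising because two orthogonal directions of growth must both succeed. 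In the $k=1$ Frob\"ose case, the stronger corner-activation rule eliminates one layer of overhead, which is what allows the cleaner upper bound with power $1$ of $\log s^{-1}$.

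The main obstacle is the upper bound, where I expect the delicate work to lie. Here I would union-bound over all possible shell-growth histories at a dyadic sequence of scales, and apply the new sharp upper bound for $k$-gap-free probabilities to each scale separately. Two sources of polylogarithmic loss must be controlled carefully: first, the combinatorial cost of choosing which shells appear (giving a $\log s^{-1}$ factor per scale), and second, the cost of passing from exact column-growth probabilities to their asymptotic form near the critical regime. Gravner and Holroyd's power $(\log s^{-1})^{3}$ comes from cruder union bounds and a less precise gap estimate; replacing these by the refined Markov-chain-interpolation bound mentioned in the abstract, and tightening the scale-gluing step, should bring the exponent down to $5/2$. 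The remaining difficulty is showing that the accumulated error in matching the leading $-2\lambda_k s^{-1}$ never exceeds $c_2 s^{-1/2}(\log s^{-1})^{5/2}$, which requires careful bookkeeping across all dyadic scales uniformly in $k$.
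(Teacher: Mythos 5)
Your proposal runs into trouble at exactly the place the paper flags as its main conceptual contribution. You frame both bounds around internally spanned rectangles and shell-growth histories, but the paper's entire point is that this framework does \emph{not} generalize cleanly to $k \geq 3$: the internally-spanned condition is too ``tight'' relative to what the probability estimates can actually see, and the Aizenman--Lebowitz/Holroyd decomposition into internally spanned sub-rectangles becomes unwieldy for large neighborhoods. The paper replaces it with \emph{rectangle growth sequences} (nested rectangles, each with no $k$ consecutive empty rows or columns, each obtained from the previous by adding part of its $\Shell$), which is a strictly weaker necessary condition for growth but exactly matches the row/column information that the $k$-gap probability bound controls. Without this substitution your upper-bound plan would stall at the step where you try to enumerate internally spanned configurations for general $k$.

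Your lower-bound mechanism is also off. You propose a single nucleation-plus-growth strategy in which ``a nucleus of probability $\exp(-\Theta(s^{-1/2}))$'' is followed by shells, but a nucleus costing $\exp(-\Theta(s^{-1/2}))$ would only \emph{worsen} the bound. The actual source of the $+c_1 s^{-1/2}$ gain in the exponent is \emph{entropy}: the paper sums the probabilities of $\binom{\lfloor s^{-1}(1-M)\rfloor}{m}(s^{-1/2})^m$ pairwise \emph{disjoint} growth events $E_k(a_1,b_1,\dots,a_m,b_m)$, each built from one diagonal event $\Dk$ and $m \sim M s^{-1/2}$ horizontally skew events $\Jk$. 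Each individual event has probability slightly \emph{smaller} than $\exp(-2\lambda_k s^{-1})$ (losing a factor $(cs)^m$), but the count of choices more than compensates after choosing $M$ appropriately. A single deterministic configuration cannot produce this gain. Finally, your dyadic scale-gluing plan for the upper bound is plausible in spirit but does not identify where the power $5/2$ comes from; in the paper it falls out of optimizing explicit parameters $A \sim s^{-1/2}$, $B \sim s^{-1}\log s^{-1}$, $D \sim s^{1/2}(\log s^{-1})^{-1/2}$ against the four competing error terms $A\log s^{-1}$, $A^{-1}B$, $D^{-1}(\log s^{-1})^2$, and $sB^2D\log s^{-1}$, and it is not evident that a dyadic scheme would reproduce that optimization.
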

\begin{remark}
Throughout the later sections of this paper we use
lower-case $c$'s to represent various (effective) constants without distinguishing them by different indices.
Although this is a slight abuse of notation, there are a finite
number of such constants in use, and one can simply adopt the
supremum/infimum of the collections as necessary.
\end{remark}

There are two main components underlying our improvements.  First,
the combinatorial arguments used in Gravner and Holroyd's
upper bound results for $k=1$ and $2$ do not immediately generalize
to higher $k$, and we adopt a significantly different conceptual
approach in order to unify all cases.  Much of the literature in this subject, including \cite{AL, Hol03,
GH09, GH08, HLR}, uses the concept of {\it internally spanned}
rectangles when studying upper bounds; the general idea being that
if a large rectangle  eventually becomes active, then there
must be a nested subsequence of smaller rectangles that each became
active during the overall growth process.

Our new approach comes from the observation that in their final
form, these previous arguments replace internally spanned rectangles by rectangles
that satisfy certain row and column conditions.  In other
words, the combinatorial approach is unnecessarily ``tighter''
than the bounds that are actually proven, and the arguments are
therefore more restrictive (and thus more complicated) than is
necessary.  We introduce a new combinatorial construction that is more precisely
tailored to reflect the row and column conditions that are actually used in approximations.
The combinatorial approach used in the proof of the $k=1$ and $k=2$ cases of the
lower bound of Theorem \ref{T:maintheorem} then generalize in a more straightforward manner.

The second new component of our arguments is a two-sided bound for the
probabilities of certain pattern-avoiding sequences that naturally
underlie the preceding applications to bootstrap percolation models
(a very rough approximation of these bounds was proven and used in
\cite{HLR}). For $s \in (0,1)$, let $C_1, \ldots, C_n, \ldots$ be
independent events with probabilities
\begin{equation*}
\Prob(C_n)=1-e^{-ns}.
\end{equation*}
Let $A_k$ be the event that there are no $k$-gaps among the
occurrences of the events $C_i$, which means that there are no consecutive $C_i$'s that do not occur.
Symbolically, this can be written
as
\begin{equation}
A_k:=\bigcap_{i=1}^\infty \left(C_i\cup C_{i+1}\cup \ldots \cup
C_{i+k-1}\right).
\end{equation}
Holroyd, Liggett, and Romik described the logarithmic limiting
behavior of these probabilities.
\begin{theorem*}[Theorem 2 in \cite{HLR}]
For every $k \geq 1$,
\begin{equation*}
-\log \Prob(A_k) \sim \lambda_k s^{-1} \qquad \qquad \text{as } s
\to 0.
\end{equation*}
\end{theorem*}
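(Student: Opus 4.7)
The plan is to express $\Prob(A_k)$ as the survival probability of a time-inhomogeneous Markov chain and reduce the asymptotic to a closed-form dilogarithm integral via an adiabatic approximation. Let $Y_n$ denote the number of trailing failures at time $n$, i.e., the maximal $m\geq 0$ with $C_{n-m+1}^c\cap\cdots\cap C_n^c$. Then $(Y_n)$ is a Markov chain on $\{0,1,\ldots,k-1\}\cup\{\dagger\}$, with $\dagger$ an absorbing state encoding the occurrence of a $k$-gap, and $A_k$ is exactly the event that the chain is never absorbed. Writing the one-step transition on non-absorbed states as a $k\times k$ substochastic matrix $M(f)$ with $M(f)_{j,0}=1-f$ and $M(f)_{j,j+1}=f$ for $j<k-1$, we obtain
\[
\Prob(A_k) \;=\; e_0^\top\Bigl(\prod_{n=1}^{\infty}M(q^n)\Bigr)\mathbf{1},\qquad q=e^{-s}.
\]

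The main analytic ingredient will be the Perron--Frobenius eigenvalue $\mu_k(f)$ of $M(f)$, which is smooth on $[0,1]$ with $\mu_k(0)=1$, $\mu_k(1)=0$, and is characterized implicitly by $\mu^k(1-\mu)=f^k(1-f)$ with $\mu\neq f$. The first technical step is the explicit evaluation
\[
\int_0^{\infty}\log\mu_k(e^{-x})\,dx \;=\; -\lambda_k \;=\; -\frac{\pi^2}{3k(k+1)}.
\]
For $k=1$ this reduces to the classical identity $\int_0^\infty\log(1-e^{-x})\,dx=-\mathrm{Li}_2(1)=-\pi^2/6$; for general $k$ I would substitute $u=\mu_k(f)$ and use the implicit relation together with a telescoping dilogarithm identity to land on the stated closed form. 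Standard Euler--Maclaurin then gives $\sum_{n\geq 1}\log\mu_k(e^{-ns})=-\lambda_k s^{-1}+O(1)$.

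The core probabilistic step is the adiabatic approximation
\[
\log\Prob(A_k) \;=\; \sum_{n\geq 1}\log\mu_k(q^n)\;+\;o(s^{-1}),
\]
which I would establish by matching upper and lower bounds. Partition $\N$ into consecutive windows on which the parameter $q^n$ is essentially constant, but which are long enough for the chain to relax toward the instantaneous Perron eigenvector. Within each window, perturbation theory around the leading eigenvalue of $M(q^n)$ shows that the matrix product behaves like $\mu_k(q^n)$ raised to the window length, up to a relative error controlled by the spectral gap and the parameter variation; gluing across windows and bounding the boundary-layer losses completes the approximation.

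The hard part will be controlling the adiabatic approximation uniformly in the two degenerate regimes. Near $n\approx 0$ the probability $q^n$ is close to $1$, so $\mu_k(q^n)$ is small and the spectral gap of $M(q^n)$ degenerates; here I would instead invoke a direct estimate of the probability that a $k$-gap occurs among the first $O(s^{-1/2})$ events, which is absorbed into the $O(1)$ error. In the tail $ns\to\infty$, $M(q^n)$ is nearly a rank-one projection onto state $0$, and one must show that both $\log\Prob(A_k)$ and the Riemann sum beyond $n\approx s^{-1}\log s^{-1}$ contribute only $O(1)$, hence negligibly on the target scale $s^{-1}$.
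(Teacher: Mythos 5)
Your spectral framework is correct and aligns with the Markov-chain perspective that HLR themselves used for the constant-probability case (see the remark following \eqref{E:rec}): the $k\times k$ substochastic matrix $M(f)$ does have Perron eigenvalue $f_k(f)$, since $(\mu-f)\det(\mu I-M(f))=-(\mu^k-\mu^{k+1})+(f^k-f^{k+1})$, so the eigenvalues are exactly the roots of $h_k(\mu)=h_k(f)$ other than $\mu=f$. Your integral identity $\int_0^\infty \log\mu_k(e^{-x})\,dx=-\lambda_k$ is precisely Lemma~\ref{L:gkproperties}(ii), i.e.\ $\int_0^\infty g_k=\lambda_k$, which the paper cites from \cite{HLR} and \cite{AEPR} rather than reproving; your dilogarithm sketch would need to be fleshed out if you want a self-contained argument.

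Where you genuinely diverge from the paper is in the adiabatic step. The paper avoids perturbation theory and spectral-gap control entirely: Theorem~\ref{T:kgaps2sided} gives the \emph{exact} two-sided sandwich $\prod_{i=1}^n f_k(1-u_i)\le\Prob(\text{no $k$-gap})\le\prod_{i=k}^n f_k(1-u_i)$ for any monotone sequence of probabilities, proved by induction on the recurrence \eqref{E:rec} using the auxiliary functions $H_k$ and $\tilde{H}_k$. This eliminates the window/gluing argument and all boundary-layer bookkeeping, and it yields a multiplicative error of only $k-1$ factors of $f_k$ --- i.e.\ $O(\log s^{-1})$ on the log-probability scale --- which is far stronger than the $o(s^{-1})$ you would get from a carefully executed adiabatic bound. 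What your route buys is generality: the eigenvalue-product heuristic applies to any time-inhomogeneous chain, not just this particular $k$-gap structure. What the paper's route buys is sharpness and technical economy, which is exactly what Theorem~\ref{T:probTheorem} needs.

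One concrete inaccuracy: in your treatment of the regime $n\approx 0$ you claim the contribution of the first $O(s^{-1/2})$ events is $O(1)$. It is not. By Lemma~\ref{L:gkproperties}(iv), $g_k(ns)\sim\frac{1}{k}\log(ns)^{-1}$ for small $ns$, so $\sum_{n\le s^{-1/2}}g_k(ns)\asymp s^{-1/2}\log s^{-1}$, and the same order holds for $-\log\Prob(\text{no $k$-gap among the first }s^{-1/2}\text{ events})$. This is $o(s^{-1})$, which still suffices for the stated asymptotic, but you should not claim an $O(1)$ bound --- and this discrepancy is exactly why the adiabatic route cannot recover the sharp secondary terms of Theorem~\ref{T:probTheorem} without substantially more work.
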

\begin{remark}
Although this is not stated in the paper, equation (14) from Section
3 of \cite{HLR} immediately implies that if $s$ is sufficiently
small, then there exists a positive constant $c_2$ such that
\begin{equation}
\label{E:HLRupper} -\log \Prob(A_k) \leq \lambda_k s^{-1} + c_2
s^{-\frac{1}{2}} \log{s^{-1}}.
\end{equation}
Equation (14) in \cite{HLR} also leads to a corresponding implicit
lower bound, although an additional technical result is needed.
Using Lemma \ref{L:gkproperties} part (ii) from later in the
present paper, the resulting bound would state that there is a positive
constant $c_1$ such that
\begin{equation}
\label{E:HLRlower} \lambda_k s^{-1} - c_1 s^{-\frac{1}{2}}
\log{s^{-1}} \leq -\log \Prob(A_k).
\end{equation}
\end{remark}

The probability events $A_k$ are also of interest in combinatorial
number theory and the theory of partitions, and Holroyd, Ligget, and
Romik explained the connections in \cite{HLR}.  They showed that
\begin{equation}
G_k(q) = \sum_{n \geq 0} p_k(n) q^n = \frac{\Prob(A_k)}{ (q;
q)_{\infty}} =
 \Prob(A_k) \cdot \sum_{n \geq 1} p(n) q^n,
\end{equation}
where $p(n)$ denotes the number of integer partitions of $n$, and
$p_k(n)$ denotes the number of partitions of $n$ without
$k$-sequences, i.e., those partitions that do not contain any $k$
consecutive integers as parts.  We have also used the standard notation
$(a;q)_n=(a)_n := \prod_{j=1}^{n-1} \left(1-aq^j \right)$ for the
rising $q$-factorial.

These partitions were further studied by
Andrews in \cite{And05}, who found the explicit (double
hypergeometric) $q$-series expansion
\begin{equation}
G_k(q) = \frac{1}{ (q; q)_{\infty}} \sum\limits_{r,s \geq 0}
\frac{(-1)^s q^{\binom{k+1}{2}(s+r)^2 + (k+1)\binom{r+1}{2}}}{
\left( q^k;q^k\right)_s \left( q^{k+1};q^{k+1}\right)_r }.
\end{equation}
In the case $k=2$, he also found the alternative expression
\begin{equation}
G_2(q) =  \frac{\left( -q^3;q^3\right)_{\infty}}{\left(q^2;q^2
\right)_{\infty}} \cdot \chi(q),
\end{equation}
where
\begin{equation*}
\chi(q) := 1+ \sum\limits_{n \geq 1} \frac{q^{n^2}}{\prod\limits_{j
= 1}^n \left(1-q^j + q^{2j}\right)}
\end{equation*}
is one of Ramanujan's third-order mock theta functions \cite{Wat}.
He then used the cuspidal expansions of mock theta functions and
modular forms to prove the exact (non-logarithmic) asymptotic
formula in the case $k = 2$:
\begin{equation*}
\Prob(A_2) \sim \sqrt{\frac{\pi}{2}} s^{-\frac{1}{2}}
\exp{\left(-\lambda_2 s^{-1}\right)} \qquad \qquad \text{as } s \to
0.
\end{equation*}
\begin{remark}
The present authors showed in \cite{BM} that although this series
asymptotic corresponds to the ``dominant cusp'' for $G_2(q)$, it
does not provide sufficient information to determine the full
asymptotic expansion of the $q$-series coefficients $p_2(n)$, or
even the entire leading exponential term.  The authors developed and
used an amplified version of the Hardy-Ramanujan circle method in
order to find formulas for the coefficients of such functions
(general products of mock theta functions and modular forms), with
error at most $O(\log n)$.
\end{remark}

Although the functions $G_k(q)$ do not seem to be automorphic forms
for $k \geq 3$, Andrews also found numerical evidence for the
asymptotic in the general case and identified theta function
components that make asymptotically ``large'' contributions.  This
led him to a conjectural formula for the general asymptotic.
\begin{conjecture*}[Andrews \cite{And05}]
For each $k \geq 2$, there is a positive constant $c_k$ such that
\begin{equation*}
\Prob(A_k) \sim c_k s^{-\frac{1}{2}} \exp\left(-\lambda_k
s^{-1}\right) \qquad \qquad \text{as } s \to 0.
\end{equation*}
\end{conjecture*}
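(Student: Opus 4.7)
My plan is to combine Andrews' double $q$-hypergeometric representation of $G_k(q)$ with the Dedekind eta asymptotic for $(q;q)_\infty$ and extract the sharp asymptotic by a two-dimensional saddle-point analysis. Since $\Prob(A_k) = (q;q)_\infty\, G_k(q)$ and
\begin{equation*}
(q;q)_\infty = \sqrt{2\pi/s}\,\exp\!\left(-\tfrac{\pi^2}{6s}+\tfrac{s}{24}\right)\bigl(1+O(e^{-c/s})\bigr)
\end{equation*}
as $s\to 0^+$, the conjecture becomes equivalent to the purely exponential statement $G_k(e^{-s})\sim (c_k/\sqrt{2\pi})\exp(-(\lambda_k-\pi^2/6)s^{-1})$: the entire $s^{-1/2}$ prefactor in $\Prob(A_k)$ should come from the eta factor, and $G_k(e^{-s})$ should be purely exponential.

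To analyze $G_k(e^{-s})$, I would rewrite Andrews' sum using summation indices $m,n$ (in place of his $s,r$), set $m=u/\sqrt{s}$ and $n=v/\sqrt{s}$, and apply Euler--Maclaurin to the Pochhammer denominators: for fixed $u,v>0$,
\begin{equation*}
-\log(e^{-ks};e^{-ks})_m = \frac{1}{ks}\int_0^{ku}\!\bigl(-\log(1-e^{-t})\bigr)\,dt + O(1),
\end{equation*}
and similarly for $(e^{-(k+1)s};e^{-(k+1)s})_n$. Together with the quadratic exponent in Andrews' formula, this shows that the log of the $(m,n)$ summand equals $F(u,v)/s+O(1)$ for a smooth function $F$ built from a quadratic form and two dilogarithm-type integrals. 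Next I would verify that $F$ has a unique non-degenerate critical point $(u^*,v^*)$ with critical value $-(\lambda_k-\pi^2/6)$; this identification is forced by the Holroyd--Liggett--Romik threshold and can be confirmed via the Euler--Lagrange equations that characterize $\lambda_k$ variationally. A standard two-dimensional Laplace argument on the Riemann sum then contributes, from each parity class of $m$, a factor of order $e^{F(u^*,v^*)/s}/\sqrt{\det(-\mathrm{Hess}\,F(u^*,v^*))}$.

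The principal obstacle is the alternating sign $(-1)^m$: even and odd values of $m$ give leading Gaussian profiles of opposite sign that nearly cancel, so Laplace's method cannot be applied directly to the full sum. This is precisely why Andrews needed Ramanujan's third-order mock theta function $\chi(q)$ in the case $k=2$, and for $k\geq 3$ the function $G_k$ lacks such a modular companion. To handle the phase in general, I would apply Poisson summation in $m$ by writing $(-1)^m=e^{i\pi m}$: the resulting Gaussian integral is then evaluated at a complex-shifted saddle whose real part still yields the threshold exponent $-(\lambda_k-\pi^2/6)/s$, while the imaginary shift contributes a theta-like constant that, combined with the Hessian determinant, produces the explicit $c_k$. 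Rigorously controlling this cancellation --- including uniform bounds on the Euler--Maclaurin remainders and on the tails where the saddle-point approximation degenerates --- is the delicate heart of the argument, and is what separates the sharp asymptotic from the two-sided bounds \eqref{E:HLRupper} and \eqref{E:HLRlower}.
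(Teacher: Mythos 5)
The statement you were asked to prove is Andrews' conjecture, which the paper explicitly labels as a \emph{conjecture} and does not prove. The best the paper achieves is Theorem \ref{T:probTheorem}, which gives two-sided bounds on $\Prob(A_k)$ that differ by a multiplicative factor of roughly $s^{-(2k-1)/(2k)}$, obtained by a probabilistic route (Theorem \ref{T:kgaps2sided} on $k$-gap probabilities plus the Integral Comparison Theorem). There is therefore no ``paper's proof'' to compare your proposal against; you should have recognized the statement as an open problem rather than treating it as a provable theorem.

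As for your proposal on its own terms: it is an outline of an attack, not a proof, and you say as much yourself when you write that rigorously controlling the cancellation from the sign $(-1)^s$ ``is the delicate heart of the argument'' --- and then do not carry it out. That cancellation is not a loose end to be tied up later; for $k\geq 3$ the function $G_k$ is (as the paper notes) not expected to be automorphic, so the mock-modular structure that let Andrews handle $k=2$ via $\chi(q)$ is unavailable, and there is no known way to control the oscillatory tails of the two-dimensional Riemann sum uniformly. Poisson summation in the index $m$ is a reasonable idea, but you would then need to show that after the contour shift the subleading Fourier modes are genuinely small compared with the putative main term $\exp\bigl(-(\lambda_k - \pi^2/6)s^{-1}\bigr)$, and nothing in your sketch addresses that. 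There is also a technical slip in the setup: to balance the quadratic exponent $\binom{k+1}{2}(m+n)^2$ against the Pochhammer factors, the critical scaling should be $m\sim u/s$, $n\sim v/s$ (so that $\sigma(m+n)^2 \sim (u+v)^2/\sigma$ is of order $1/\sigma$), not $m\sim u/\sqrt{s}$ as you wrote; with your scaling the quadratic term is $O(1)$ and cannot contribute to the leading exponential. In short, the reduction to a saddle-point problem is plausible in spirit but incorrectly normalized, and the key analytic step --- controlling the alternating sum --- is left as an acknowledged gap, so this does not constitute progress toward the conjecture beyond what is already stated in the paper.
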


We greatly refine Holroyd, Liggett, and Romik's theorem for the logarithmic
asymptotic of $\Prob(A_k)$ (as well as the implicit bounds \eqref{E:HLRupper} and \eqref{E:HLRupper}), and also make
significant progress toward Andrews' conjecture by proving
(non-logarithmic) lower and upper bounds that differ by less than a
multiplicative factor of $s$.
\begin{theorem}\label{T:probTheorem}
For every positive integer $k$, we have the following asymptotic as
$s \to 0$:
\begin{equation*}
\exp\left(-\lambda_ks^{-1}\right)\leq \Prob(A_k)\leq
s^{-\frac{(2k-1)}{2k}}\exp\left(-\lambda_ks^{-1}\right)\left(1+o(1)\right).
\end{equation*}
\end{theorem}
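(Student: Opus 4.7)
The plan is to bound $\Prob(A_k)$ from above and below by expressions involving the infinite product $\prod_{n\geq 1} g_k(e^{-ns})$, where $g_k$ is the auxiliary function whose analytic properties are collected in Lemma~\ref{L:gkproperties}, and then to reduce both bounds to a Riemann-sum comparison against the integral $\int_0^\infty -\log g_k(e^{-x})\,dx = \lambda_k$. Conceptually, $g_k$ plays the role of the relevant Perron eigenvalue of the $k\times k$ transfer matrix of the Markov chain whose state tracks the length of the current run of failed $C_i$'s; in particular one expects $g_k(Q)\sim(1-Q)^{1/k}$ as $Q\to 1^-$ and $1-g_k(Q)\sim Q^k$ as $Q\to 0^+$, so that $-\log g_k(e^{-x})$ is a positive, strictly decreasing function on $(0,\infty)$ behaving like $\frac{1}{k}\log(1/x)$ near zero.

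For the \textbf{lower bound}, I first establish
\[
\Prob(A_k) \;\geq\; \prod_{n\geq 1} g_k(e^{-ns})
\]
by a spectral argument on the inhomogeneous product $M_N M_{N-1}\cdots M_1 e_0$ (a Collatz--Wielandt / positivity comparison against the local Perron eigenvector of each $M_n$, using that $e_0$ has nonnegative overlap with this eigenvector). Taking logarithms gives $-\log\Prob(A_k)\leq \sum_{n\geq 1}-\log g_k(e^{-ns})$. Since $-\log g_k(e^{-x})$ is positive and strictly decreasing, the right-endpoint Riemann sum of mesh $s$ underestimates the integral:
\[
s\sum_{n\geq 1} -\log g_k(e^{-ns}) \;\leq\; \int_0^\infty -\log g_k(e^{-x})\,dx \;=\; \lambda_k.
\]
Combining yields $-\log\Prob(A_k)\leq \lambda_k s^{-1}$, and hence $\Prob(A_k)\geq\exp(-\lambda_k s^{-1})$.

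For the \textbf{upper bound}, my plan is to prove a reverse inequality $\Prob(A_k)\leq B(s)\prod_{n\geq 1}g_k(e^{-ns})$ with $B(s) = O\bigl(s^{-(2k-1)/(2k)}\bigr)$, combined with a matching lower estimate on the Riemann sum. For the sum, an Euler--Maclaurin expansion anchored at the logarithmic singularity at $x=0$ yields $s\sum_{n\geq 1}-\log g_k(e^{-ns}) \geq \lambda_k + O(s\log s^{-1})$, which is absorbable into the final $1+o(1)$ factor. The polynomial factor $B(s)$ arises from the transient regime $n\lesssim s^{-1}$ of the Markov chain: there $Q_n\approx 1$, the local Perron eigenvector of $M_n$ is concentrated near state $k-1$, and the mismatch between this and the initial state $e_0$ produces a subexponential loss of the claimed order. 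Putting everything together gives $-\log\Prob(A_k)\geq \lambda_k s^{-1} - \tfrac{2k-1}{2k}\log s^{-1} + O(1)$, which is the desired upper bound.

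The hard part is proving the reverse product bound with the sharp exponent $(2k-1)/(2k)$. This requires careful spectral tracking across the transition regime $n\sim s^{-1}$, where the dominant eigenvector of $M_n$ rotates from being concentrated near state $k-1$ (when $Q_n$ is close to $1$) to being concentrated near state $0$ (when $Q_n$ is close to $0$); a naive operator-norm bound like $\|M_N\cdots M_1\|\leq \prod\|M_n\|$ would produce a much larger polynomial correction. A secondary technicality is uniformly controlling the Euler--Maclaurin error across the logarithmic singularity of $-\log g_k(e^{-x})$ at $x=0$, which must be done using the precise local asymptotic supplied by Lemma~\ref{L:gkproperties}.
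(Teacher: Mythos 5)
Your high-level plan coincides with the paper's: bound $\Prob(A_k)$ above and below by products of $f_k(e^{-ns})$ and then compare the resulting sums of $g_k(ns)=-\log f_k(e^{-ns})$ against $\int_0^\infty g_k = \lambda_k$. (Beware a notation clash: what you call $g_k$ throughout is the paper's $f_k$; the paper's $g_k(z)$ is $-\log f_k(e^{-z})$.) However, there are two substantive gaps.

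First, the central step — the two-sided inequality
\[
\prod_{j=1}^n f_k\!\left(e^{-js}\right)\;\leq\;\Prob\!\left(\{A_j\}_{j=1}^n\text{ has no }k\text{-gaps}\right)\;\leq\;\prod_{j=k}^n f_k\!\left(e^{-js}\right)
\]
— is where essentially all of the work lies, and you dispose of it with a one-line appeal to a ``Collatz--Wielandt / positivity comparison'' on the inhomogeneous transfer-matrix product $M_N\cdots M_1 e_0$. For non-commuting nonnegative matrices there is no general principle that bounds $\|M_N\cdots M_1 e_0\|_1$ by the product of Perron eigenvalues in both directions; the monotonicity of the $u_i$ and the specific structure of the $M_n$ must be exploited, and it is exactly this that the paper's Theorem~\ref{T:kgaps2sided} (via Propositions~\ref{P:kgapslower} and~\ref{P:kgapsupper} and the auxiliary functions $H_k$, $\tilHk$, together with the monotonicity lemmas for $T_j$ and $D_j$) is built to do. The paper even remarks that the eigenvalue argument covers only the homogeneous case from \cite{HLR}. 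Until your spectral argument is actually carried out, the proposal is missing its core.

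Second, your Euler--Maclaurin step is quantitatively too crude to yield the claimed exponent. You assert $s\sum_{n\geq 1} g_k(ns) \geq \lambda_k + O(s\log s^{-1})$ and call the error ``absorbable into $1+o(1)$''; after dividing by $s$ this is an unspecified $O(\log s^{-1})$ additive error in $-\log\Prob(A_k)$, i.e.\ an uncontrolled polynomial factor $s^{-C}$ — not a $1+o(1)$ factor. The paper instead uses the two-sided Integral Comparison Theorem exactly, so the polynomial factor decomposes cleanly: $s^{-(k-1)/k}$ from the $k-1$ missing initial product terms in the upper bound, and $s^{-1/(2k)}$ from the half-endpoint correction $\tfrac{1}{2}g_k(ks)\sim\tfrac{1}{2k}\log s^{-1}$, giving $(k-1)/k + 1/(2k) = (2k-1)/(2k)$. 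Your heuristic attribution of $B(s)$ to ``spectral tracking across the transition regime'' is a misdiagnosis; once the two-sided product bound is in hand, the exponent falls out of elementary integral comparison with no delicate eigenvector rotation analysis needed.
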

\begin{remark}
In fact, our proof shows that $\Prob(A_k)$ lies between lower and
upper bounds that differ by a multiplicative factor of at most
$s^{-(k-1)/k}$; however, in order to write both bounds in a clean
manner, we have expanded the range to $s^{-(2k-1)/2k}$ in the theorem
statement.
\end{remark}
One very interesting feature of this result is that we have used
arguments from combinatorics and probability in order to conclude
better asymptotic bounds for the coefficients of a $q$-series.  In the subject of
combinatorial $q$-series, such implications more frequently proceed
in the opposite direction. For example, Andrews'   work on the $k=2$
case used analytic and automorphic properties of modular forms and
mock theta functions in order to conclude more precise combinatorial
and probabilistic statements \cite{And05, BM}.

The remainder of the paper is organized as follows.  In Section
\ref{S:Probability} we define a class of certain pattern-avoiding
probability sequences and prove an important new general bound for
the probability of sequences without $k$-gaps.  In Section
\ref{S:Logprobability} we turn to a logarithmic version of the
function from Section \ref{S:Probability} and prove several useful
estimates that are used in bounds throughout the rest of the paper.
The proof of Theorem \ref{T:maintheorem} is found in the next two
sections, and is split into the lower and upper bounds.  Section
\ref{S:Lower} contains the proof of the lower bound in Theorem
\ref{T:maintheorem}, which combines generalized versions of the
combinatorial ideas from \cite{GH08} with the estimates from Section
\ref{S:Logprobability}.  The upper bound then follows in Section
\ref{S:Localupper}; in order to adapt Gravner and Holroyd's
arguments in \cite{GH09} to the case of general $k$, we introduce a
conceptually different combinatorial model for tracking the possible
ways a configuration can grow.  We also make slight (and optimal)
modifications to some of the parameters used in Gravner and
Holroyd's arguments in order to achieve an improved
$\log s^{-1}$-power.  Finally, Theorem \ref{T:probTheorem} is proved in Section
\ref{S:PAk} by adapting the general probability results of Section
\ref{S:Probability} to the specific cases described in the theorem
statement.

\section*{Acknowledgments}
To be entered later.


\section{Probability results}
\label{S:Probability}

\subsection{Definitions and notation}

In \cite{HLR}, Holroyd, Liggett, and Romik introduced and studied an interesting
family of functions.  For each positive integer $k$,
$f_k(x)$ is defined to be the unique, decreasing function on $[0,1]$
that satisfies the functional equation
\begin{equation}\label{E:fshortform}
f^k(x)-f^{k+1}(x)=x^k-x^{k+1}.
\end{equation}
\begin{remark}
Note that $f_k(0)=1$ and $f_k(1)=0$ for all $k$, and also that the
first two cases have simple explicit equations; namely, $f_1(x) =
1-x$ and $f_2(x) = \frac{1-x + \sqrt{(1-x)(1+3x)}}{2}$ (this case
was studied extensively by Holroyd in \cite{Hol03}).
\end{remark}

As was pointed out in \cite{HLR}, the existence and (Lipschitz)
continuity of $f_k$ follows immediately from the fact that $h_k(x)
:= x^k - x^{k+1}$ is strictly increasing for $0 \leq x <
\frac{k}{k+1}$ and is strictly decreasing for $\frac{k}{k+1} < x
\leq 1.$ We will also require the following equivalent ``long'' form
of \eqref{E:fshortform}:
\begin{equation}\label{E:flongform}
f^k = (1-x)\left(f^{k-1} + xf^{k-2} + \dots + x^{k-1}\right).
\end{equation}

The main result in this section is a fairly tight probability bound
that is used in many places throughout the paper.  Note that a
sequence of probabilistic events is said to have a {\it $k$-gap} if
there are $k$ consecutive events that do {\it not} occur.
\begin{theorem}
\label{T:kgaps2sided} Suppose that $A_1, \ldots, A_n$  are
independent events that each occur with probability $\Prob(A_i) =
u_i$, where $u_1, \ldots, u_n$ form a monotonically increasing
sequence.  Then this sequence of events satisfies
\begin{equation*}
\prod\limits_{i=1}^n f_k(1-u_i) \leq \Prob\Big(\{A_i\}_{i=1}^n\text{
has no $k$-gaps}\Big) \leq \prod_{i=k}^n f_k(1-u_i).
\end{equation*}
\end{theorem}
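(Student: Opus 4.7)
The plan is to prove the two bounds separately by induction on $n$, using two \emph{different} recurrences for $P_n := \Prob(\{A_i\}_{i=1}^n\ \text{has no $k$-gaps})$. Writing $q_i := 1 - u_i$ so that $q_1 \geq q_2 \geq \cdots \geq q_n$, I would first derive both a last-success recurrence (obtained by conditioning on the position $n-j$ of the most recent success, with $0 \leq j \leq k-1$ to avoid a trailing $k$-gap) and a first-success recurrence (obtained by conditioning on the position $t \leq k$ of the first success):
\[
P_n = \sum_{j=0}^{k-1} u_{n-j}\,P_{n-j-1}\prod_{i=n-j+1}^{n} q_i \qquad\text{and}\qquad P_n = \sum_{t=1}^{k} u_t\,\widetilde{P}^{(t)}_{n-t}\prod_{i=1}^{t-1} q_i
\]
for $n \geq k$, where $\widetilde{P}^{(t)}_m$ is the probability of no $k$-gap in $A_{t+1},\ldots,A_{t+m}$. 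Both are supplemented by $P_m = 1$ for $0 \leq m < k$.

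The base cases ($n \leq k$) reduce to two elementary consequences of \eqref{E:fshortform}: namely $f_k(q) + q \geq 1$ and $f_k(q) \geq 1-q^k$, both of which follow from the unimodality of $h_k(x) = x^k - x^{k+1}$ on $[0,1]$. Together with the monotonicity of $(q_i)$, these give for example $P_k = 1 - q_1\cdots q_k \leq 1 - q_k^k \leq f_k(q_k)$ for the upper bound, and analogously for the lower bound.

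For the inductive step I would use the last-success recurrence for the upper bound and the first-success recurrence for the lower bound; this choice is crucial, since the naive approach of using a single recurrence for both bounds leads to contradictory algebraic requirements. Substituting the respective inductive hypotheses and dividing by the appropriate product of $f_k(q_i)$'s, the upper bound reduces to
\[
\sum_{\ell=0}^{k-1}\frac{u_{n-\ell}}{f_k(q_{n-\ell})}\prod_{i=n-\ell+1}^{n}\frac{q_i}{f_k(q_i)} \leq 1,
\]
while the lower bound reduces to the inequality $\sum_{t=1}^{k}\frac{u_t}{f_k(q_t)}\prod_{i=1}^{t-1}\frac{q_i}{f_k(q_i)} \geq 1$. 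Both sums equal $1$ in the constant case $q_1 = \cdots = q_n = q$ precisely by the long form \eqref{E:flongform} of the functional equation; this is the identity that makes each recurrence ``balance'' against its corresponding product of $f_k(q_i)$'s.

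The main obstacle is proving these two algebraic inequalities when the $q_i$'s are not constant. The monotonicity hypothesis is essential and must be exploited in opposite directions for the two inequalities: since $h(q) := (1-q)/f_k(q)$ is strictly decreasing in $q$ while $g(q) := q/f_k(q)$ is strictly increasing in $q$, the two sums move in opposite directions under a decreasing-$q$ perturbation away from the constant case. I expect to handle this by a directional-derivative analysis at the constant-case equality: for each sum, I will check that along every extremal direction of the monotone cone $\{q_{n-k+1}\geq\cdots\geq q_n\}$ (respectively $\{q_1\geq\cdots\geq q_k\}$), the directional derivative has the correct sign, so that each sum attains its extremum on the cone at the apex where all $q_i$'s coincide. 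Both directional-sign computations ultimately reduce to manipulations of the fundamental identity $f_k^k(1-f_k) = q^k(1-q)$ combined with the opposing monotonicity of $h$ and $g$.
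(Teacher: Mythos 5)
Your reduction is essentially correct, and there is a genuinely nice idea in it: by proving the lower bound via a first-success recurrence and the upper bound via a last-success recurrence (both stated for increasing $u_i$), you arrive at two \emph{symmetric} $k$-variable target inequalities. In fact your upper-bound inequality, after reversing the order of variables, is the \emph{same} function for which the lower bound is $\geq 1$ — just evaluated on the opposite monotone cone. This is cleaner than the paper's route, which proves the upper bound for decreasing $u_i$ and (because the product in the claimed bound starts at index $1$ rather than $k$) has to introduce a ``decoupled'' auxiliary function $\widetilde{H}_k$ in $2k-1$ variables; your form needs only $k$. (The paper then obtains the increasing case by reversal, which is morally the same move as your two-recurrence choice.) Your base-case discussion of $f_k(q)+q\geq 1$ and $f_k(q)\geq 1-q^k$ is reasonable and matches the ancillary facts needed.

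However, there is a genuine gap in the last step. You propose to establish the two reduced inequalities — say, after your reindexing, $\Phi(y_1,\dots,y_k):=\sum_{t=1}^k \frac{1-y_t}{f_k(y_t)}\prod_{m<t}\frac{y_m}{f_k(y_m)}\lessgtr 1$ on the monotone cone — by a ``directional-derivative analysis at the constant-case equality,'' citing only the opposing monotonicity of $T_1(y)=(1-y)/f_k(y)$ and $G(y)=y/f_k(y)$. That is not enough. Computing a single coordinate derivative of $\Phi$ gives a sum of terms of \emph{both} signs (a $T_1'$ contribution of one sign and $G'$ contributions of the other), so the sign is not determined by these two monotonicities alone — not even at the diagonal. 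The paper's actual proof isolates a non-obvious intermediate lemma: \emph{every} partial sum $D_j:=T_1+\dots+T_j$, $j=1,\dots,k$, where $T_i(y):=(1-y)y^{i-1}/f_k^i(y)$, is decreasing. That statement is proven by a downward induction using the observation that $T_j=T_i\cdot(y/f_k)^{j-i}$, so $T_i'(y)>0$ forces $T_j'(y)>0$ for $j>i$. With the $D_j$'s in hand, the proof is a discrete substitution (``ratchet'') argument — push $y_1\to y_2$, then $y_2\to y_3$, etc., each step controlled by a different $D_j$ — not a derivative argument. Your plan would need to discover the $D_j$ lemma, and nothing in the sketch points toward it. A second, smaller issue: your ``clean'' form of the upper-bound inequality is the correct reduction only when $n\geq 2k$; for $k\leq n<2k$ some inductive hypotheses degenerate to $P_m\leq 1$ and the quotient acquires extra $f_k$-factors in the denominator, so the inequality to be proved is actually slightly \emph{stronger} there and needs separate treatment. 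Finally, ``each sum attains its extremum on the cone at the apex'' is imprecise — the monotone cone contains the whole diagonal line, not a single vertex, and a sign check at the diagonal alone does not by itself yield a global bound on the cone; you need the sign (or a ratchet path) globally.
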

\begin{remark}
If $n < k$, then the empty product on the right-hand side should be
interpreted as having the value $1$.
\end{remark}
\begin{remark}
If we take the limit $n \rightarrow \infty$, then the case of monotonically increasing probabilities $u_i$ is
essentially the only interesting possibility; if the probabilities oscillate indefinitely, then all of the terms in Theorem \ref{T:kgaps2sided} approach zero rapidly.
\end{remark}
The proof of both bounds will use inductive arguments.  Define the
shorthand notation
\begin{equation*}
\rho_n:= \Prob\Big(\{A_i\}_{i=1}^n\text{ has no $k$-gaps}\Big),
\end{equation*}
and observe that the initial $k$ values are clearly
$\rho_0=\ldots=\rho_{k-1}=1$, all of which satisfy the theorem
bounds.

The inductive argument for all other cases follows from a simple
combinatorial recurrence.  Observe that if the event $\rho_{n+k}$
occurs (where $n \geq 0$), then at least one of $A_{n+1}, \dots,
A_{n+k}$ must occur.  Separating into distinct cases based on the
final such $A_{n+i}$ leads to the following recurrence:
\begin{equation}\label{E:rec}
\rho_{n+k}=\rho_{n+k-1}u_{n+k}+\rho_{n+k-2}u_{n+k-1}(1-u_{n+k})+\dots+\rho_n
u_{n+1}(1-u_{n+2})\dots(1-u_{n+k}).
\end{equation}
The first term corresponds to the case where $A_{n+k}$ occurs, the
second term the case where $A_{n+k-1}$ occurs and $A_{n+k}$ does
not, and so on until the final term, which represents the case where
only $A_{n+1}$ occurs.
\begin{remark}
If $k=1$, Theorem \ref{T:kgaps2sided} is actually an equality (which
is seen to be trivially true upon recalling that $f_1(x) = 1-x$). Gravner
and Holroyd proved the lower bound for the case $k=2$ in \cite{GH08}
by using the explicit formula for $f_2(x)$. Furthermore, Holroyd,
Liggett, and Romik \cite{HLR} also proved the theorem for general
$k$ in the very special case that all $u_i$ are equal; their
argument essentially treats \eqref{E:rec} as a linear recurrence and
shows that $f_k$ gives the largest eigenvalue. Alternatively, this
is equivalent to calculating the limiting entropy of the
corresponding Markov process as $n$ increases.
\end{remark}

\subsection{Lower bound for probabilistic $k$-gaps}
\label{S:Lowerkgaps}

We prove a result that is more general than the lower bound in
Theorem \ref{T:kgaps2sided}, as it also allows for decreasing
probabilities.
\begin{proposition}\label{P:kgapslower}
Suppose that $A_1, \ldots, A_n$  are independent events whose
corresponding probabilities $u_1, \ldots, u_n$ are either increasing
or decreasing.  Then this sequence of events satisfies
\[
\Prob\Big(\{A_i\}_{i=1}^n\text{ has no $k$-gaps}\Big)\geq
\prod_{i=1}^n f_k(1-u_i).
\]
\end{proposition}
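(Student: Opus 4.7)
The plan is to prove this by induction on $n$. Because both the event ``$\{A_i\}_{i=1}^n$ has no $k$-gaps'' and the product $\prod_{i=1}^n f_k(1-u_i)$ are invariant under reversing the index set $\{1,\ldots,n\}$, I may assume without loss of generality that $u_i$ is monotone non-increasing, equivalently that $v_i:=1-u_i$ is non-decreasing. The base cases $0\leq n\leq k-1$ are immediate, since $\rho_n=1$ and $f_k(v)\leq 1$ on $[0,1]$.

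For the inductive step, assume $\rho_m\geq\prod_{i=1}^m f_k(v_i)$ for all $m<n+k$. The recurrence \eqref{E:rec} writes $\rho_{n+k}$ as a positive linear combination of $\rho_{n+k-1},\ldots,\rho_n$, so replacing each $\rho_m$ by its inductive lower bound, factoring out $\prod_{i=1}^n f_k(v_i)$, and relabeling the block $v_{n+1},\ldots,v_{n+k}$ as $v_1\leq v_2\leq\cdots\leq v_k$ reduces the desired inequality $\rho_{n+k}\geq\prod_{i=1}^{n+k}f_k(v_i)$ to the following key algebraic inequality, in which $w_i:=f_k(v_i)$:
\begin{equation*}
\sum_{i=1}^{k}(1-v_i)\Bigl(\prod_{j<i}w_j\Bigr)\Bigl(\prod_{j>i}v_j\Bigr)\;\geq\;\prod_{j=1}^{k}w_j. \qquad(\dagger)
\end{equation*}

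The main obstacle is establishing $(\dagger)$ under the constraint $v_1\leq\cdots\leq v_k$. In the constant case $v_1=\cdots=v_k=v$ it holds with equality, because the long form \eqref{E:flongform} of the functional equation rewrites the left side as $(1-v)\sum_{j=0}^{k-1}v^j w^{k-1-j}=w^k$. For general monotone $v_i$, the plan is to use \eqref{E:flongform} to eliminate $\prod_{j=1}^k w_j$ on the right side of $(\dagger)$ and then compare term by term with the left side, exploiting monotonicity properties of auxiliary functions of $f_k$ obtained by implicit differentiation of \eqref{E:fshortform}. Already when $k=2$ a short manipulation using $w_2^2=u_2(w_2+v_2)$ gives
\[
u_1v_2+u_2w_1-w_1w_2\;=\;\frac{v_2}{w_2}\bigl(u_1w_2-w_1u_2\bigr),
\]
so that $(\dagger)$ then reduces to the easily verified monotonicity of $v\mapsto f_2(v)/(1-v)$; an analogous factorization for general $k$ should reduce $(\dagger)$ to monotonicity statements for similar auxiliary functions of $f_k$.
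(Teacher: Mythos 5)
Your reduction is exactly the paper's: the base cases, the inductive use of the recurrence \eqref{E:rec}, factoring out $\prod_{i=1}^n f_k(v_i)$, and arriving at the key inequality $(\dagger)$ all match the paper's reduction to the nonnegativity of the auxiliary function $H_k(y_1,\ldots,y_k)$ (Proposition~\ref{L:HLemma}): your $(\dagger)$ is precisely $H_k(v_1,\ldots,v_k)\geq 0$. Your $k=2$ computation is also correct, and the factor $u_1 w_2 - w_1 u_2$ that appears is exactly the statement that $T_1(v)=(1-v)/f_k(v)$ is decreasing, which is Lemma~\ref{TLemma}(\emph{i}) of the paper (equivalently Lemma~\ref{L:fkLemma}(\emph{ii})).

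The genuine gap is the proof of $(\dagger)$ for general $k$. You say ``an analogous factorization for general $k$ should reduce $(\dagger)$ to monotonicity statements for similar auxiliary functions of $f_k$,'' but this is where the actual content of the argument lies, and a single factorization analogous to $k=2$ does not obviously exist. What the paper does instead is a rolling substitution: it writes the bracketed sum as $\prod f_k(y_j)\bigl(\sum_i T_i\text{-type terms} - 1\bigr)$, observes that the term containing $y_1$ is $\frac{y_2\cdots y_k}{f_k(y_2)\cdots f_k(y_k)}D_1(y_1)$ where $D_j:=T_1+\cdots+T_j$ and $T_j(y)=(1-y)y^{j-1}/f_k^j(y)$, and then replaces $y_1\mapsto y_2$ (allowed because $D_1$ is decreasing), combining two terms into one involving $D_2(y_2)$; it then replaces $y_2\mapsto y_3$ using $D_2$ decreasing, and so on until only $D_k(y_k)-1=0$ remains. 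The hard part is showing \emph{all} the accumulation functions $D_1,\ldots,D_k$ are decreasing (Lemma~\ref{DLemma}); this is not a consequence of $T_1$ decreasing alone, and the paper's proof requires also that $T_k$ is \emph{increasing} (Lemma~\ref{TLemma}(\emph{ii}), another nontrivial computation from the implicit derivative \eqref{fkquotient}) together with a downward induction from the identity $D_k\equiv 1$. For $k\geq 3$ the intermediate $D_j$ do not reduce to a single monotone ratio like $f_k(v)/(1-v)$, so the claim that the general case should follow from ``similar'' monotonicity statements is left unsubstantiated, and the intermediate $T_j$ are in fact \emph{not} monotone on $[0,1]$ (as the paper notes), making the naive version of your plan fail. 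You should identify the correct accumulation functions $D_j$, prove they are monotone, and carry out the iterative replacement to close the argument.
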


This proposition will follow from an inductive argument that relies
on an auxiliary function that is essentially an
approximation of \eqref{E:rec}.  For $k\geq 1$, let
\begin{align*}
H_k(y_1, \ldots, y_k)&:=
(1-y_k)f_k(y_1)\cdots f_k(y_{k-1})+(1-y_{k-1})y_kf_k(y_1)\cdots f_k(y_{k-2})\\
& +\cdots+(1-y_i)y_{i+1}\cdots y_k f_k(y_1)\cdots
f(y_{i-1})+\cdots+(1-y_1)y_2\cdots y_k-f_k(y_1)\cdots f_k(y_k).
\end{align*}
Note that $H_k(y, \ldots, y)=0$ by \eqref{E:flongform}. The most
important property of this function is found in the following
proposition.
\begin{proposition}\label{L:HLemma}
For increasing arguments $0\leq y_1\leq \ldots \leq y_k\leq 1$, the
function $H_k(y_1, \ldots, y_k)$ is nonnegative.
\end{proposition}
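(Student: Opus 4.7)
The plan is to reduce a general monotone configuration $(y_1,\ldots,y_k)$ to the all-equal diagonal $(y_k,\ldots,y_k)$, at which $H_k$ vanishes by the long form \eqref{E:flongform} of the functional equation. The base case $k=1$ is immediate: $f_1(y) = 1-y$ gives $H_1(y_1) \equiv 0$. For $k \geq 2$, the essential auxiliary input is that $h(y) := (1-y)/f_k(y)$ is nonincreasing on $[0,1]$, and in parallel $p(y) := (1-f_k(y))/y$ is nondecreasing. Both facts are proved by differentiating the short form \eqref{E:fshortform} to obtain
\begin{equation*}
f_k'(y) = \frac{y^{k-1}[k-(k+1)y]}{f_k(y)^{k-1}[k-(k+1)f_k(y)]},
\end{equation*}
locating the common fixed point $y^* = k/(k+1)$ of $y\mapsto f_k(y)$, and verifying by a sign analysis on either side of $y^*$—where the numerator and denominator of $h'$ both vanish with compatible orders—that $h'(y) \leq 0$ throughout.

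Applying the long form \eqref{E:flongform} to collapse the sum in $H_k$ at the special configuration in which only the first coordinate differs from a common value $y$ yields the compact identity
\begin{equation*}
H_k(y_1,y,y,\ldots,y) = y^{k-1}\,f_k(y_1)\,\bigl[h(y_1)-h(y)\bigr],
\end{equation*}
which is manifestly nonnegative for $y_1\leq y$ by monotonicity of $h$. A parallel computation for the dual configuration in which only the last coordinate differs yields
\begin{equation*}
H_k(z,z,\ldots,z,y_k) = f_k(z)^{k-1}\,y_k\,\bigl[p(y_k)-p(z)\bigr],
\end{equation*}
nonnegative for $z\leq y_k$ by monotonicity of $p$. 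These two identities anchor the general reduction.

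To handle a fully general monotone configuration $y_1\leq y_2\leq\cdots\leq y_k$, I would deform it to the diagonal via the sequence of coordinate collapses
\begin{equation*}
(y_1,y_2,\ldots,y_k)\to(y_2,y_2,y_3,\ldots,y_k)\to\cdots\to(y_k,y_k,\ldots,y_k),
\end{equation*}
showing at each step that $H_k$ does not decrease as the current smallest coordinate slides up to meet its neighbor. Each step is justified by computing the associated partial (or directional) derivative from the definition of $H_k$, simplifying it with the functional equation, and reducing the sign question to the monotonicity of $h$. \emph{The main obstacle} is making this iterative reduction rigorous when several distinct values remain in the configuration simultaneously; I expect to handle this by expressing each collapse step in a form whose nonnegativity reduces to the two boundary identities displayed above, which themselves are clean consequences of the monotonicity of $h$ and $p$.
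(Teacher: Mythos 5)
Your overall strategy — collapsing a monotone configuration to the diagonal — is the paper's, and your two boundary identities are correct; in the paper's notation, $h=T_1$ and (by the short form \eqref{E:fshortform}) $p=T_k$, so you have rediscovered Lemma \ref{TLemma}. But note the direction is reversed in your inductive claim: your first identity gives $H_k(y_1,y,\ldots,y)=y^{k-1}f_k(y_1)\bigl[h(y_1)-h(y)\bigr]\geq 0$ when $y_1\leq y$, while $H_k(y,\ldots,y)=0$, so $H_k$ \emph{decreases} to $0$ as the small coordinate slides up. If $H_k$ literally "did not decrease" along the collapse, the conclusion would be $H_k\leq 0$. The correct assertion is that $H_k$ does not \emph{increase} at each step.

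The substantive gap is that the two endpoint monotonicity facts do not control the general collapse step. After factoring $\prod_i f_k(y_i)$ out of $H_k$, a configuration $(y_j,\ldots,y_j,y_{j+1},\ldots,y_k)$ (first $j$ coordinates equal) depends on $y_j$ only through the partial sum $D_j(y_j):=T_1(y_j)+\cdots+T_j(y_j)$, where $T_i(y)=(1-y)y^{i-1}/f_k^i(y)$, and the $j$-th step requires $D_j$ to be nonincreasing for \emph{every} $j=1,\ldots,k-1$. You have $D_1=T_1=h$ nonincreasing, and since $D_k\equiv 1$ by \eqref{E:flongform}, also $D_{k-1}=1-T_k=1-p$ nonincreasing — this covers $k\leq 3$. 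But for $k\geq 4$ the intermediate $D_2,\ldots,D_{k-2}$ are not determined by the endpoints, and your boundary identities, which only see configurations with at most two distinct coordinate values, say nothing about them. The paper closes this gap with Lemma \ref{DLemma}, proved by a downward induction from $D_k\equiv 1$: at any $y$, either $T_i'(y)\leq 0$ for all $i<j$ (and then $D_{j-1}'(y)\leq 0$ trivially), or some $T_i'(y)>0$, in which case $T_j'(y)>0$ because $T_j=T_i\cdot(y/f_k)^{j-i}$ and $y/f_k$ is nonnegative and increasing (Lemma \ref{L:fkLemma}), whence $D_{j-1}'(y)=D_j'(y)-T_j'(y)\leq 0$ by the induction hypothesis. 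Your proposal needs a lemma of this strength; it does not reduce to the two boundary identities alone.
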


We further postpone the proof of Proposition \ref{L:HLemma} until
after we have proven several necessary intermediate results.
\begin{lemma}\label{L:fkLemma} \mbox{}
Consider the domain $0 \leq y \leq 1$.
\begin{enumerate}
\item
\label{L:fkLemma:fk}
The function $\displaystyle \frac{y}{f_k(y)}$ is increasing.
\item
\label{L:fkLemma:fk'}
We have
\begin{equation*}
\frac{f'_k(y)}{f_k(y)}\geq -\frac{1}{1-y}.
\end{equation*}
\end{enumerate}
\end{lemma}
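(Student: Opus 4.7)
The plan is to prove part (i) almost immediately from basic monotonicity of $f_k$, and then to deduce part (ii) from part (i) together with the long form \eqref{E:flongform} of the functional equation.

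For part (i), recall that $f_k$ is the unique decreasing function on $[0,1]$ satisfying \eqref{E:fshortform}, with $f_k(0)=1$ and $f_k(1)=0$; in particular $f_k$ is nonnegative and nonincreasing on $[0,1]$. Hence on $[0,1)$ the map $y\mapsto 1/f_k(y)$ is nonnegative and nondecreasing, and so is $y\mapsto y$. The product $y/f_k(y)$ is therefore nondecreasing, proving (i).

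For part (ii), note that since $f_k(y)>0$ on $[0,1)$ the asserted inequality is equivalent to
\[
\frac{d}{dy}\log\!\left(\frac{f_k(y)}{1-y}\right)=\frac{f'_k(y)}{f_k(y)}+\frac{1}{1-y}\geq 0,
\]
so it suffices to show that $y\mapsto f_k(y)/(1-y)$ is nondecreasing on $[0,1)$. I would obtain this by dividing both sides of \eqref{E:flongform} by $f_k(y)^{k-1}(1-y)$, which yields the clean identity
\[
\frac{f_k(y)}{1-y}=\sum_{j=0}^{k-1}\left(\frac{y}{f_k(y)}\right)^{j}.
\]
Each term on the right is a nonnegative integer power of $y/f_k(y)$, which by part (i) is nondecreasing in $y$; the sum is therefore nondecreasing, establishing (ii).

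The only real insight is reformulating (ii) as monotonicity of $f_k(y)/(1-y)$ and then noticing that \eqref{E:flongform} rewrites this ratio purely in terms of $y/f_k(y)$, so that (ii) reduces immediately to (i). I do not anticipate any serious obstacle, since once this identity is in hand the remainder is routine monotonicity reasoning.
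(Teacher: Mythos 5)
Your proof of part (i) is the same as the paper's: both $y$ and $1/f_k(y)$ are nonnegative and nondecreasing, so their product is nondecreasing.

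For part (ii) you take a genuinely different and cleaner route. The paper differentiates the long form \eqref{E:flongform} directly to derive the explicit expression
\[
\frac{f'_k}{f_k}=-\frac{1}{1-y}+\frac{f_k^{k-2}+2yf_k^{k-3}+\cdots+(k-1)y^{k-2}}{(1-y)\left(f_k^{k-1}+2yf_k^{k-2}+\cdots+ky^{k-1}\right)},
\]
and reads off the inequality by observing that the second term is nonnegative. You instead divide \eqref{E:flongform} by $f_k^{k-1}(1-y)$ to obtain the identity $f_k(y)/(1-y)=\sum_{j=0}^{k-1}\left(y/f_k(y)\right)^{j}$, which is correct, and then part (ii) follows immediately from part (i) since each summand is nondecreasing and $f'_k/f_k + 1/(1-y)$ is the logarithmic derivative of $f_k/(1-y)$. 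Your argument is shorter and avoids differentiating a length-$k$ product, so it is arguably the more elegant proof of this particular lemma. The tradeoff is that the paper's messier computation \eqref{fkquotient} is not a throwaway: the authors reuse exactly that identity in the proof of Lemma \ref{TLemma}\ref{TLemma:Tk}, so if one adopts your proof here, one still has to carry out the paper's differentiation there.
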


\begin{proof}
\begin{enumerate}
\item
This holds because the functions $y$ and $\frac{1}{f_k(y)}$ are both increasing and non-negative.\\
\item Differentiating \eqref{E:flongform}, we find that
\begin{equation}\label{fkquotient}
\begin{split}
f'_k &\left(k f_k^{k-1}-(1-y)(k-1)f_k^{k-2}-\ldots-(1-y)y^{k-2}\right)\\
& = -f_k^{k-1}+(1-2y)f_k^{k-2}+\ldots+\left((k-1)y^{k-2}-ky^{k-1}\right)\\
\mathop{\iff}^{\text{\eqref{E:flongform}}}
\frac{f'_k}{f_k}& \left((1-y)f_k^{k-1}+2(1-y)yf_k^{k-2}+\ldots+(k-1)(1-y)y^{k-2}f_k+k(1-y)y^{k-1}\right)\\
& = -f_k^{k-1}+(1-2y)f_k^{k-2}+\ldots+\left((k-1)y^{k-2}-ky^{k-1}\right)\\
\iff
\frac{f'_k}{f_k}&=-\frac{1}{1-y}+\frac{f_k^{k-2}+2yf_k^{k-3}+\ldots+(k-1)y^{k-2}}{(1-y)\left(f_k^{k-1}+2yf_k^{k-2}+\ldots+ky^{k-1}\right)}.
\end{split}
\end{equation}
\end{enumerate}
This easily gives the claim.
\end{proof}

\noindent Next, for $1\leq j\leq k$ define
\begin{equation}
\label{E:Tj} T_{k, j}(y)=T_j:=\frac{(1-y)y^{j-1}}{f_k^j(y)}.
\end{equation}
This family of functions interpolates between $T_1$ and $T_k$, whose
opposing behaviors are described in the next result.
\begin{lemma}\label{TLemma}
The following hold for every $k \geq 1$.
\begin{enumerate}
\item
\label{TLemma:T1}
The function $T_1$ is decreasing.
\item
\label{TLemma:Tk}
The function $T_k$ is increasing.
\end{enumerate}
\end{lemma}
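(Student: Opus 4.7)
Part (i) is an immediate consequence of Lemma \ref{L:fkLemma}(ii). Taking the logarithmic derivative of $T_1(y) = (1-y)/f_k(y)$ gives
\[
(\log T_1)'(y) \;=\; -\frac{1}{1-y} \;-\; \frac{f_k'(y)}{f_k(y)},
\]
and Lemma \ref{L:fkLemma}(ii) supplies $f_k'/f_k \geq -1/(1-y)$, so this expression is $\leq 0$. Hence $T_1$ is decreasing.

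For part (ii), my plan is to reduce to part (i) via a symmetry of the defining functional equation. The key observation is that \eqref{E:fshortform} can be written as $h_k(f_k(y)) = h_k(y)$, where $h_k(x) := x^k - x^{k+1}$ is unimodal on $[0,1]$ with maximum at $x = k/(k+1)$. For each value in its range, $h_k$ has (at most) two preimages, one in each monotonicity interval. Since $f_k$ is the unique decreasing continuous function with $f_k(0) = 1$ and $f_k(1) = 0$, and since the maximum of $h_k$ forces $f_k(k/(k+1)) = k/(k+1)$, the function $f_k$ must swap the intervals $[0, k/(k+1)]$ and $[k/(k+1), 1]$. It therefore sends every $y$ to the \emph{other} preimage of $h_k(y)$, which yields $f_k(f_k(y)) = y$; in other words, $f_k$ is an involution on $[0,1]$.

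With the involution in hand, I rewrite $T_k$ using the functional equation \eqref{E:fshortform} in the form $f_k^k(1-f_k) = y^k(1-y)$, which gives
\[
T_k(y) \;=\; \frac{(1-y) y^{k-1}}{f_k^k(y)} \;=\; \frac{1-f_k(y)}{y}.
\]
Substituting $y = f_k(f_k(y))$ from the involution identity yields $T_k(y) = (1-f_k(y))/f_k(f_k(y)) = T_1(f_k(y))$. Since $T_1$ is decreasing by part (i) and $f_k$ itself is decreasing, the composition $T_k = T_1 \circ f_k$ is increasing, proving (ii). The most delicate point in this plan is the involution claim; this will require care in tracking the two preimage branches of $h_k$ and verifying that $f_k$ does in fact exchange them, but it is then a short calculation to assemble the rest.
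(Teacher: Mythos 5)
Your part (i) matches the paper's proof exactly. Part (ii), however, is a genuinely different and more conceptual argument, and it is correct. The paper differentiates $T_k$ directly, reduces $T_k' \geq 0$ to the inequality $-f_k'/f_k \geq (ky-(k-1))/(ky(1-y))$, substitutes the closed form for $f_k'/f_k$ obtained in Lemma \ref{L:fkLemma}(ii), and then checks the resulting polynomial inequality term by term. You instead observe that the defining relation $h_k(f_k(y))=h_k(y)$ forces $f_k$ to be the ``other preimage'' map for the unimodal $h_k$, hence an involution $f_k\circ f_k=\mathrm{id}$; that observation, combined with the algebraic simplification $T_k(y)=(1-f_k(y))/y$ from the short form \eqref{E:fshortform}, yields $T_k = T_1\circ f_k$, so (ii) follows from (i) and the monotonicity of $f_k$. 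The involution identity is correct: $f_k$ swaps the two monotonicity intervals of $h_k$, preserves the $h_k$-level sets, and is decreasing and continuous, so it coincides with the branch-swapping map and is its own inverse. Your route buys a cleaner proof that avoids the polynomial verification and makes transparent \emph{why} $T_1$ and $T_k$ have opposite monotonicity; the paper's route is self-contained calculus and does not require establishing the involution, but is messier. Either is fine for the lemma as stated; if you use yours, you should state and prove the involution property explicitly as a standalone fact, since it is not recorded anywhere in the paper and is the load-bearing step.
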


\begin{proof}
\begin{enumerate}
\item
We differentiate (\ref{E:Tj}) to get
\[
T'_1=\frac{1}{f_k}\left(-1+(1-y)\left(\frac{-f'_k}{f_k}\right)\right),
\]
and use Lemma \ref{L:fkLemma} \ref{L:fkLemma:fk'} to conclude that this is
non-positive.
\item
We have
\[
T'_k=\frac{(k-1)y^{k-2}-ky^{k-1}}{f_k^k}-\frac{kf'_k(y^{k-1}-y^k)}{f_k^{k+1}}.
\]
Thus $T'_k\geq 0$ is equivalent to
\begin{equation}\label{Tineq}
-\frac{k f'_k(y-y^2)}{f_k}\geq -(k-1)+ky\iff \frac{-f'_k}{f_k}\geq
\frac{ky-(k-1)}{ky(1-y)}.
\end{equation}
Inserting \eqref{fkquotient} implies that \eqref{Tineq} is
again equivalent to
\begin{align*}
& \frac{1}{1-y}-\frac{f_k^{k-2}+2y
f_k^{k-3}+\ldots+(k-1)y^{k-2}}{(1-y)\left(f_k^{k-1}+2y
f_k^{k-2}+\ldots+ky^{k-1}\right)} \geq \frac{ky-(k-1)}{ky(1-y)},
\end{align*}
which further simplifies to
\begin{align*}
& (k-1)f_k^{k-1}+2(k-1)yf_k^{k-2}+\ldots+(k-1)k y^{k-1} \geq
kyf_k^{k-2}+2ky^2f_k^{k-3}+\ldots+k(k-1)y^{k-1}.
\end{align*}
Comparing like powers of $f_k$ directly shows that this last
inequality is satisfied for $k \geq 2$ (Observe that for $k=1$ the Lemma's
claim is trivial).
\end{enumerate}
\end{proof}
\begin{remark}
Interestingly, it can also be shown that for any $2 \leq j \leq
k-1$, $T_j$ is increasing on some interval $[0,\sigma_j]$, and is
then decreasing on $[\sigma_j, 1]$.  We do not devote any space to
proving this, as it is not needed in the sequel.
\end{remark}

We next define the accumulation functions for $1\leq j\leq k$ as
\begin{equation}
\label{E:Dj} D_{k, j}(y)=D_j(y):=T_1(y)+\ldots+T_j(y).
\end{equation}
Note that by \eqref{E:flongform} we have the identity $D_k(y)=1$.  Although the individual components $T_j$ have varying behavior, their sums are much more uniform.
\begin{lemma}\label{DLemma}
The functions $D_j$ are decreasing for all $1 \leq j \leq k$.
\end{lemma}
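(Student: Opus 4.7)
The key observation is that by \eqref{E:flongform} we have $D_k(y) \equiv 1$, so it is equivalent to prove that the complementary tail $E_j := D_k - D_j = T_{j+1} + \cdots + T_k$ is non-decreasing on $[0,1]$ for each $0 \leq j \leq k - 1$.

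My plan has three steps. First, I would place the defining sum for $E_j$ over the common denominator $f_k^k$ and recognize the resulting numerator as a truncated geometric series in $y$ and $f_k$. This yields
\begin{equation*}
E_j(y) \;=\; \frac{1-y}{f_k^k}\cdot\frac{y^j f_k^{k-j} - y^k}{f_k - y}.
\end{equation*}
Invoking \eqref{E:flongform} in the equivalent form $\frac{1-y}{f_k^k} = \frac{f_k - y}{f_k^k - y^k}$ cancels the factor $f_k - y$ and collapses $E_j$ to the compact expression
\begin{equation*}
E_j(y) \;=\; \frac{y^j f_k(y)^{k-j} - y^k}{f_k(y)^k - y^k}.
\end{equation*}

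Second, I would introduce the substitution $g := f_k(y)/y$. Dividing numerator and denominator by $y^k$ displays $E_j$ as $\phi_j(g(y))$, where $\phi_j(g) := \frac{g^{k-j}-1}{g^k-1}$. By Lemma~\ref{L:fkLemma}\ref{L:fkLemma:fk}, $y/f_k$ is increasing in $y$, so $g = (y/f_k)^{-1}$ is decreasing in $y$. It therefore suffices to show that $\phi_j$ is a decreasing function of $g$ on $[0,\infty)$.

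Finally, a direct differentiation shows that $\phi_j'(g) \leq 0$ is equivalent to the polynomial inequality
\begin{equation*}
\psi(g) \;:=\; j g^k - k g^j + (k-j) \;\geq\; 0 \qquad (g \geq 0,\; 1 \leq j \leq k-1).
\end{equation*}
This I would verify by noting that $\psi'(g) = jk\, g^{j-1}(g^{k-j}-1)$ is negative on $(0,1)$ and positive on $(1,\infty)$, so $g = 1$ is the global minimum on $[0,\infty)$; since $\psi(1) = 0$, the inequality follows. The only substantive step is the first one: once the compact closed form for $E_j$ is in hand, the substitution $g = f_k/y$ removes all dependence on the implicitly defined function $f_k$ and reduces the claim to a single-variable polynomial inequality that can be checked by elementary calculus. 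The case $j = 0$ is trivial since $E_0 = D_k = 1$, so combining all of this gives $D_j = 1 - E_j$ non-increasing for every $1 \leq j \leq k$.
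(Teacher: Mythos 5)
Your argument is correct, and it takes a genuinely different route from the one in the paper. The paper's proof is inductive: it starts from $D_k \equiv 1$, and at each downward step $D_j \to D_{j-1}$ it exploits the multiplicative relation $T_j = T_i (y/f_k)^{j-i}$ (both factors non-negative, the second increasing by Lemma~\ref{L:fkLemma}\ref{L:fkLemma:fk}) to conclude that wherever some $T_i'$ with $i \leq j-1$ is positive, $T_j'$ is also positive, so that subtracting $T_j$ from $D_j$ can only decrease the derivative; combined with the inductive hypothesis $D_j' \leq 0$ this closes the step. Your proof instead closes the tail sum $E_j = T_{j+1} + \cdots + T_k$ in finite form by summing the geometric series, then invokes the functional equation in the guise $\frac{1-y}{f_k^k} = \frac{f_k-y}{f_k^k - y^k}$ to obtain the compact expression
\begin{equation*}
E_j(y) = \frac{y^j f_k(y)^{k-j} - y^k}{f_k(y)^k - y^k} = \phi_j\!\left(\frac{f_k(y)}{y}\right), \qquad \phi_j(g) = \frac{g^{k-j}-1}{g^k-1}.
\end{equation*}
Composing with the decreasing quantity $g = f_k/y$ (again Lemma~\ref{L:fkLemma}\ref{L:fkLemma:fk}) reduces the lemma to the elementary single-variable inequality $\psi(g) = jg^k - kg^j + (k-j) \geq 0$, which you verify correctly via the sign of $\psi'$. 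Both proofs rest on the same two ingredients --- monotonicity of $y/f_k$ and the identity $D_k \equiv 1$ --- but yours replaces the inductive bookkeeping and the $T_j = T_i(y/f_k)^{j-i}$ observation with an explicit algebraic collapse of the sum, which has the pleasant side effect of eliminating the implicitly defined $f_k$ entirely from the final inequality. One small point worth noting in a written version: the factor $f_k - y$ (and hence $f_k^k - y^k$) vanishes at the fixed point $y = k/(k+1)$; the compact formula there should be understood as a removable singularity, which one can dispose of by continuity or by checking directly that $E_j(k/(k+1)) = (k-j)/k$.
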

\begin{proof}
We proceed inductively and note that $D_k=1$ is decreasing. Now
suppose that $D_j$ is decreasing. Then
\[
D'_{j-1}=T'_1+\ldots+T'_{j-1}.
\]
If $T'_i(y)\leq 0$ for all $1\leq i\leq j-1$ and all $y \in [0,1]$,
then $D_{j-1}$ is clearly decreasing. Otherwise, if there is some
$y$ and some $1\leq i\leq j-1$ such that $T'_i(y)>0,$ then we also
have that
\[
T'_j(y)>0,
\]
since $T_j=T_i\left(\frac{y}{f_k}\right)^{j-i}$, and both factors are
non-negative and  increasing at $y$. Then
\[
D'_{j-1}(y)=D'_j(y)-T'_j(y)\leq D'_j(y)\leq 0
\]
by the induction hypothesis.
\end{proof}

\begin{proof}[Proof of Proposition \ref{L:HLemma}]
We factor out the product of all of the $f_k(y_i)$ from the
definition of $H_k$ and write
\begin{align}
\label{E:Hk} H_k(y_1, \ldots, y_k) & =f_k(y_1)\ldots f_k(y_k)
\Bigg(\frac{1-y_k}{f_k(y_k)}+\frac{(1-y_{k-1})y_k}{f_k(y_{k-1})f_k(y_k)}+\ldots\\
& +\frac{(1-y_i)y_{i+1}\ldots y_k}{f_k(y_i)f_k(y_{i+1}) \ldots
f_k(y_k)}+\ldots+\frac{(1-y_1)y_2\ldots y_k}{f_k(y_1)\ldots
f_k(y_k)}-1\Bigg). \notag
\end{align}
Consider the second-to-last term, which can be written as
\begin{equation*}
\frac{y_2\ldots y_k}{f_k(y_2)\ldots f_k(y_k)}\cdot D_1(y_1).
\end{equation*}
Lemma \ref{DLemma} states that $D_1(y_1)\geq D_1(y_2)$, so we can
replace $y_1$ by $y_2$ without increasing \eqref{E:Hk}. After this
substitution, there are now two terms that contain $y_2$, and the
combination of them can be written as
\begin{equation*}
\frac{y_3\ldots y_k}{f_k(y_3)\ldots f_k(y_k)}\cdot D_2(y_2).
\end{equation*}
Again we can use Lemma \ref{DLemma} and replace $y_2$ by $y_3$
without increasing the result.  Continuing in this way, we
successively increase the indices of all of the $y_i$'s until
\eqref{E:Hk} becomes
\begin{align*}
H_k(y_1, \dots, y_k) \geq f_k(y_1) \cdots f_k(y_k) \left(D_k(y_k) -
1\right) = 0.
\end{align*}
\end{proof}

\begin{proof}[Proof of Proposition \ref{P:kgapslower}]
As mentioned earlier, we use an inductive argument based on
\eqref{E:rec}.  Without loss of generality, assume that the $u_i$
are decreasing (if they are increasing, simply reverse the order)
and recall that the proposition holds for the initial cases.

The inductive hypothesis implies that every term on the right-hand
side of \eqref{E:rec} has a lower bound in terms of the $f_k$, which
gives
\begin{align*}
\rho_{n+k}
& \geq \prod_{i=1}^n f_k(1-u_i)\Big(u_{n+k}f_k(1-u_{n+1})\dots f_k(1-u_{n+k-1})\\
& +u_{n+k-1}(1-u_{n+k})f_k(1-u_{n+1})\dots f_k(1-u_{n+k-2})+\ldots+u_{n+1}(1-u_{n+2})\dots(1-u_{n+k})\Big) \\
& = \prod_{i=1}^n f_k(1-u_i) \Big(H_k(y_{n+1}, \ldots,
y_{n+k})+f_k(1-u_{n+1})\ldots f_k(1-u_{n+k})\Big),
\end{align*}
where  $y_i:=1-u_i$, so the $y_i$ are increasing. Then by
Proposition \ref{L:HLemma} we have the required bound
\[
\rho_{n+k}  \geq \prod_{i=1}^{n+k} f_k(1-u_i).
\]
\end{proof}
\begin{remark}
Numerical evidence (and the naive heuristic of grouping together
the events with larger probabilities) suggests that Proposition
\ref{P:kgapslower} may also hold for arbitrary probabilities $u_i$,
and that the cases where the $u_i$ are monotonically decreasing or
increasing (or more generally, unimodular) give the tightest bounds.
\end{remark}

\subsection{Upper bound for $k$-gaps}
We also prove the upper bound of Theorem \ref{T:kgaps2sided} as
a separate statement for easier reference.
\begin{proposition}\label{P:kgapsupper}
Let $A_1, \ldots, A_n$ be independent events with probabilities $1
\geq u_1\geq u_2\geq \ldots \geq u_n\geq 0$. Then
\begin{equation*}
\Prob\big(\{A_i\}_{i=1}^n \text{ has no $k$-gaps }\big)\leq
\prod_{i=1}^{n-k+1}f_k(1-u_i).
\end{equation*}
\end{proposition}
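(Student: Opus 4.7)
The proof will follow the inductive scheme of Proposition \ref{P:kgapslower}, using the recurrence \eqref{E:rec}. The base cases $\rho_0 = \cdots = \rho_{k-1} = 1$ are trivial (the right-hand side is an empty product). For the inductive step, I substitute the inductive hypothesis $\rho_m \leq \prod_{i=1}^{m-k+1} f_k(1-u_i)$ into each term on the right of \eqref{E:rec}, factor out the common product $\prod_{i=1}^{n-k+1} f_k(1-u_i)$, and introduce the substitution $y_r := 1-u_{n-k+1+r}$ for $r = 1, \ldots, 2k-1$, which is a non-decreasing sequence because the $u_i$ are non-increasing. Matching the resulting upper bound against the target $\prod_{i=1}^{n+1} f_k(1-u_i)$ reduces the inductive step to proving the companion inequality
\begin{equation*}
\tilHk(y_1, \ldots, y_{2k-1}) := \sum_{j=0}^{k-1}(1-y_{k+j})\,y_{k+j+1}\cdots y_{2k-1}\,f_k(y_1)\cdots f_k(y_j) - f_k(y_1)\cdots f_k(y_k) \leq 0
\end{equation*}
for $0 \leq y_1 \leq \cdots \leq y_{2k-1} \leq 1$. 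Exactly as with $H_k$ in the lower bound, the all-equal case $y_1 = \cdots = y_{2k-1} = y$ collapses to equality via \eqref{E:flongform}.

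My plan for proving $\tilHk \leq 0$ is a two-stage reduction to the equal case. In the first stage I will show that $\tilHk$ is non-increasing in each upper-half variable $y_m$ for $m \in \{k+1, \ldots, 2k-1\}$, so that iteratively replacing $y_{2k-1} \to y_{2k-2} \to \cdots \to y_{k+1} \to y_k$ only weakly increases $\tilHk$. After this reduction, in the configuration $y_k = y_{k+1} = \cdots = y_{2k-1}$, dividing by $\prod_{r=1}^k f_k(y_r)$ the inequality takes the form
\begin{equation*}
(1-y_k)\sum_{l=1}^k \frac{y_k^{\,l-1}}{\prod_{r=k-l+1}^k f_k(y_r)} \leq 1,
\end{equation*}
which follows immediately from the monotonicity $f_k(y_r) \geq f_k(y_k)$ for $r \leq k$ (since $y_r \leq y_k$ and $f_k$ is decreasing) together with the identity $D_k(y_k) = \sum_{l=1}^k T_l(y_k) = 1$ encoded in \eqref{E:flongform}.

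The main obstacle is the first-stage monotonicity. For the representative case $m = 2k-1$, the claim $\partial \tilHk / \partial y_{2k-1} \leq 0$ reduces, after clearing a common factor of $f_k(y_k)$, to proving $\sum_{l=2}^k (1-y_{2k-l})\,y_{2k-l+1}\cdots y_{2k-2}\,/\prod_{r=k-l+1}^{k-1} f_k(y_r) \leq 1$, which has the structure of a truncated copy of $\tilHk \leq 0$ itself. I expect to handle this by combining the pointwise bound $f_k(y) \geq 1-y$ (equivalent to $y^{k-1}(1-y) \leq f_k^k$, which is a direct consequence of \eqref{E:flongform} upon isolating the last term of the long sum) with the family of sub-identities $(1-y)\sum_{t=0}^{j-1} y^t f_k^{\,j-1-t} \leq f_k^{\,j}$ for $1 \leq j \leq k$, which follow from \eqref{E:flongform} by truncation (the discarded tail being a non-negative sum of terms of the form $(1-y) y^t / f_k^{\,t-j+1}$). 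The monotonicity in the remaining upper variables should then follow either by descending induction on $m$, or by a direct analog of the $T_j/D_j$ replacement scheme of Lemma \ref{DLemma}, playing the same structural role as in the lower-bound argument.
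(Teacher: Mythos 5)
Your setup is correct: you reuse the recurrence \eqref{E:rec}, introduce the auxiliary function $\tilHk$ (which matches the paper's definition exactly), and reduce the inductive step to proving $\tilHk \leq 0$, finishing with the identity $D_k(y_k)=1$ via \eqref{E:flongform}. The second stage as you state it is also correct. The difficulty is all in the first stage, and your preferred route there has a real gap.

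You propose to prove the first-stage monotonicity \emph{directly}, i.e.\ to show $\partial\tilHk/\partial y_m \leq 0$ at the original (unsubstituted) point for each $m$, working downward from $m=2k-1$. The paper instead works \emph{upward} from $m=k+1$, and this order is not cosmetic: the paper computes the coefficient of $y_{k+i}$ in the function \emph{after} $y_{k+1},\ldots,y_{k+i-1}$ have already been set equal to $y_k$. After those substitutions the intermediate polynomial factors collapse to powers $y_k^{i-1-j}$, so the bracket becomes exactly $-1 + D_i(y_k) \leq 0$ once $f_k(y_r) \geq f_k(y_k)$ is applied. In your descending route, the coefficient of $y_{2k-1}$ involves the genuinely distinct variables $y_{k},\ldots,y_{2k-2}$ in both the numerators and the $f_k$-denominators; the truncated inequality you write down, $\sum_{l=2}^{k}(1-y_{2k-l})y_{2k-l+1}\cdots y_{2k-2}/\prod_{r=k-l+1}^{k-1}f_k(y_r)\leq 1$, is true but does \emph{not} follow from the naive replacements you suggest. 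For instance, for $k=3$ replacing $f_k(y_1),f_k(y_2)$ by $f_k(y_3)$ and bounding $1-y_4\leq 1-y_3$ gives $\frac{1-y_3}{f_k(y_3)} + \frac{(1-y_3)y_4}{f_k(y_3)^2}$, which can exceed $1$ (take $y_3=1/2$, $y_4=1$); proving the correct bound requires either a separate sub-induction that replays the ascending argument inside the truncated sum, or a sharper use of \eqref{E:flongform} such as $f_k^2 \geq (1-y)(f_k + y)$. In other words, your plan~A collapses to a nested version of your plan~B, and plan~B — ``a direct analog of the $T_j/D_j$ replacement scheme'' carried out in ascending index order — is precisely the paper's proof. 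So you have the right lemma and the right endgame, but the ``descending induction on $m$'' branch should be discarded, and the order of replacement upgraded from an implementation detail to the central point of the argument.
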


As before, we will use an auxiliary function, although in order to
prove an upper bound, the function needs to have bounds that are
opposite from those proven for $H_k$. Let
\begin{align}
\label{E:tildeHk} \tilHk(y_1, y_2, \ldots, y_{2k-1})
& := (1-y_{2k-1})f_k(y_1)\ldots f_k(y_{k-1})+ (1-y_{2k-2})y_{2k-1}f_k(y_1)\ldots f_k(y_{k-2}) + \dots\\
& +(1-y_{k+i})y_{k+i+1}\cdots y_{2k-1}f_k(y_1)\dots
f_k(y_{i})+\ldots+(1-y_{k})y_{k+1}\ldots y_{2k-1} \notag \\ &
-f_k(y_1)\ldots f_k(y_{k}), \notag
\end{align}
and note that this function has ``decoupled'' the
variables that appear in polynomial terms from those that appear as
the arguments of $f_k$.  In particular, $\tilHk$ is a linear
function in each of $y_{k+1}, \dots, y_{2k-1}$.

\begin{proposition}\label{L:tildeHLemma}
For $0\leq y_1\leq \ldots\leq y_{2k-1}\leq 1$, we have that
$\tilHk(y_1, \ldots, y_{2k-1})\leq 0$.
\end{proposition}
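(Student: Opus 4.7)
The plan is to exploit the multi-linearity of $\tilHk$ in the variables $y_{k+1}, \ldots, y_{2k-1}$ (none of which appears inside an $f_k$) and to bound $\tilHk$ from above by its value at the collapsed configuration $y_{k+1} = \cdots = y_{2k-1} = y_k$. To this end I would introduce, for each $1 \leq l \leq k$, the truncation
\[
\tilHk^{(l)}(y_1, \ldots, y_l, y_k, y_{k+1}, \ldots, y_{k+l-1}) := \sum_{i=0}^{l-1}(1-y_{k+i}) \prod_{m=k+i+1}^{k+l-1} y_m \prod_{j=1}^{i} f_k(y_j) - \prod_{j=1}^{l} f_k(y_j),
\]
so that $\tilHk^{(k)} = \tilHk$, and prove by induction on $l$ that $\tilHk^{(l)} \leq 0$ at every admissible point $y_1 \leq \cdots \leq y_l \leq y_k \leq y_{k+1} \leq \cdots \leq y_{k+l-1} \leq 1$.

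For the inductive step, a direct expansion identifies the coefficient of the polynomial variable $y_{k+i}$ in $\tilHk^{(l+1)}$ (for $1 \leq i \leq l$) as $\prod_{m=k+i+1}^{k+l} y_m \cdot \tilHk^{(i)}(y_1, \ldots, y_i, y_k, y_{k+1}, \ldots, y_{k+i-1})$, which is non-positive by the induction hypothesis. Then along the straight-line path $y_{k+i}(\lambda) := (1-\lambda) y_{k+i}^0 + \lambda y_k$, $\lambda \in [0,1]$ (which preserves the ordering), the chain rule yields
\[
\frac{d}{d\lambda} \tilHk^{(l+1)} = \sum_{i=1}^{l}(y_k - y_{k+i}^0) \cdot \prod_{m=k+i+1}^{k+l} y_m(\lambda) \cdot \tilHk^{(i)} \geq 0,
\]
each summand being non-negative as a product of $(\leq 0)(\geq 0)(\leq 0)$. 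Hence $\tilHk^{(l+1)}$ at the original $\lambda = 0$ is majorized by its value at $\lambda = 1$, namely $(1-y_k) \sum_{i=0}^{l} y_k^{l-i} \prod_{j=1}^{i} f_k(y_j) - \prod_{j=1}^{l+1} f_k(y_j)$. Dividing by $\prod_{j=1}^{l} f_k(y_j)$ and using $f_k(y_j) \geq f_k(y_k)$ (from $y_j \leq y_k$), this is bounded above by $(1-y_k)\sum_{m=0}^{l}(y_k/f_k(y_k))^m - f_k(y_{l+1})$, which is $\leq 0$ since the partial sum is at most the full sum $(1-y_k)\sum_{m=0}^{k-1}(y_k/f_k(y_k))^m = f_k(y_k)$ (by \eqref{E:flongform} at $y = y_k$), and $f_k(y_{l+1}) \geq f_k(y_k)$ (from $y_{l+1} \leq y_k$).

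The base case $l = 1$ reads $(1-y_k) - f_k(y_1) \leq 0$, and by monotonicity of $f_k$ it reduces to $f_k(y) + y \geq 1$. I would verify this inequality separately: substituting $f_k(y) = 1 - y$ into \eqref{E:flongform} forces $(1-y)^{k-1} = y^{k-1}$, whose unique interior solution is $y = 1/2$; but on the decreasing branch of $h(t) = t^k(1-t)$ we have $f_k(1/2) > 1/2$ for $k \geq 2$, so continuity together with the endpoint values $f_k(0) + 0 = f_k(1) + 1 = 1$ forces $f_k(y) + y \geq 1$ throughout $[0,1]$. Specializing the induction to $l = k$ then yields $\tilHk \leq 0$.

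The main obstacle I anticipate is the base-case inequality $f_k(y) + y \geq 1$, which is not explicit among the preceding lemmas and needs a separate continuity argument, together with careful bookkeeping of the empty products that arise in the boundary cases $i = 0, l$ and $l = 1, k$; once these are in place, the collapse-path monotonicity and the long-form comparison at $\lambda = 1$ are elementary.
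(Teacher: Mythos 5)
Your proposal follows the same underlying strategy as the paper's proof: exploit the multi-linearity of $\tilHk$ in the polynomial variables $y_{k+1}, \ldots, y_{2k-1}$, show each partial derivative $\partial_{y_{k+i}} \tilHk$ is non-positive, collapse each $y_{k+i}$ down to $y_k$, and bound the resulting one-variable expression by invoking \eqref{E:flongform} (equivalently, $D_k=1$). The genuinely novel feature of your write-up is the recursion on the truncations $\tilHk^{(l)}$ together with the clean identity
\begin{equation*}
\frac{\partial \tilHk^{(l+1)}}{\partial y_{k+i}} = \prod_{m=k+i+1}^{k+l} y_m \cdot \tilHk^{(i)}(y_1,\ldots,y_i, y_k, y_{k+1},\ldots,y_{k+i-1}),
\end{equation*}
which makes the self-similar structure transparent and reduces the sign of each partial derivative directly to the induction hypothesis. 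The paper instead obtains the same sign conclusion by a direct estimate: replace all $f_k(y_j)$ inside the bracket by $f_k(y_k)$ using monotonicity of $f_k$, then apply $D_i(y_k) \leq D_k(y_k) = 1$. Both amount to the same inequality, but your version is a nice structural repackaging. The straight-line path and chain rule are more machinery than strictly necessary (replacing $y_{k+i}$ by $y_k$ one coordinate at a time, as the paper does, suffices once the partial derivative signs are known), but this is a stylistic point.

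There is one genuine error, albeit easily fixed: the base-case verification of $f_k(y) + y \geq 1$ is wrong. Substituting $f_k(y) = 1-y$ into \eqref{E:flongform} gives $(1-y)^{k-1} = (1-y)^{k-1} + y(1-y)^{k-2} + \cdots + y^{k-1}$, which forces $y(1-y)^{k-2} + \cdots + y^{k-1} = 0$, hence $y \in \{0,1\}$ — not $(1-y)^{k-1} = y^{k-1}$, and there is no interior solution $y=1/2$. The inequality $f_k(y) \geq 1-y$ is nonetheless true and follows immediately from \eqref{E:flongform} itself: since every summand on the right-hand side is non-negative,
\begin{equation*}
f_k(y) = (1-y)\sum_{m=0}^{k-1}\left(\frac{y}{f_k(y)}\right)^m \geq (1-y)\cdot 1,
\end{equation*}
which is precisely the paper's observation that $D_1 = T_1 \leq D_k = 1$. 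With that repair, the induction is sound, the admissibility of the path (needed to invoke the induction hypothesis at intermediate $\lambda$) is correctly preserved as you note, and the final long-form comparison at $\lambda=1$ is valid.
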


\begin{proof}
As in the proof of Proposition \ref{P:kgapslower}, we will
eventually change all of the $y_i$'s to $y_k$ by using properties of
the functions $D_j(y)$, but this time we will also need to study the
partial derivatives of the linear terms of $\tilHk$ in order to do
so.

For the first step, consider the coefficient of $y_{k+1}$ in
\eqref{E:tildeHk}, which is
\begin{equation}
\label{E:y_{k+1}} -y_{k+2}\ldots
y_{2k-1}f_k(y_1)+(1-y_{k})y_{k+2}\ldots y_{2k-1} = y_{k+2}\ldots
y_{2k-1}f_k(y_1)\left(-1+\frac{1-y_{k}}{f_k(y_1)}\right).
\end{equation}
Since $f_k$ is decreasing, replacing $y_1$ by $y_k$ inside the
parentheses gives the following upper bound for \eqref{E:y_{k+1}}:
\begin{equation}
y_{k+2}\ldots y_{2k-1}f_k(y_1)\big(-1+D_1(y_{k})\big)\leq 0,
\end{equation}
where the inequality follows from the fact that $D_1 \leq D_2
\leq \dots \leq D_k = 1$.  This means that
\begin{equation*}
\frac{\partial}{\partial y_{k+1}} \tilHk \leq 0,
\end{equation*}
and thus changing $y_{k+1}$ into $y_{k}$ in \eqref{E:tildeHk}
increases the total expression.

Continuing inductively, assume that we have already shown that
changing $y_{k+j}\mapsto y_k$ for $1\leq j\leq i-1$ gives an upper
bound for \eqref{E:tildeHk}.  The next case is then the coefficient
of $y_{k+i}$, which is
\begin{align*}
& y_{k+i+1}\ldots y_{2k-1}\big(-f_k(y_1)\ldots
f_k(y_i)+(1-y_{k})f_k(y_1)\ldots f_k(y_{i-1})
+\ldots +(1-y_{k})y_{k}^{i-1}\big)\\
& = y_{k+i+1}\ldots y_{2k-1} f_k(y_1)\ldots f_k(y_i)
\left(-1+\frac{1-y_{k}}{f_k(y_i)}+\frac{(1-y_{k})y_{k}}{f_k(y_{i-1})f_k(y_i)}+\ldots+\frac{(1-y_{k})y_{k}^{i-1}}{f_k(y_1)\ldots f_k(y_i)}\right) \\
&  \leq y_{k+i+1}\ldots y_{2k-1}f_k(y_1)\ldots
f_k(y_i)\Big(-1+D_{i}(y_{k})\Big)\leq 0,
\end{align*}
where we used the fact that $f_k$ is decreasing in order to shift all
denominator arguments to $y_k$. Thus the partial derivative with
respect to $y_{k+i}$ is also non-positive, so the substitution
$y_{k+i} \mapsto y_k$ gives an  upper bound.  Finally, once all
$y_{k+i}$ for $1 \leq i \leq k-1$ have been set to $y_k$, we reach
the desired conclusion, as
\begin{eqnarray*}
\widetilde{H}(y_1, \ldots, y_{k}, y_{k}, \ldots, y_{k}) &\leq&
f_k(y_1)\ldots f_k(y_{k})\ \left( \frac{1-y_k}{f_k(y_k)} +
\frac{(1-y_{k})y_k}{f_k(y_{k-1})f_k(y_{k})} + \cdots +
\frac{(1-y_{k})y_k^{k-1}}{f_k(y_{1})\cdots f_k(y_{k})}  -1
\right) \\
&\leq& f_k(y_1)\ldots f_k(y_{k})\Big(D_k(y_{k})-1\Big)=0.
\end{eqnarray*}
\end{proof}

\begin{proof}[Proof of Proposition \ref{P:kgapsupper}]
As before, we use an inductive argument that relies on the
recurrence \eqref{E:rec}. By the inductive hypothesis we obtain
\begin{align*}
\rho_{n+k} & \leq
\prod_{i=1}^{n-k+1}f_k(1-u_i)\Big(u_{n+k}f_k(1-u_{n-k+2})\cdots
f_k(1-u_n) \\
& +u_{n+k-1}(1-u_{n+k})f_k(1-u_{n-k+2})
\cdots f_k(1-u_{n-1})+\ldots+u_{n+1}(1-u_{n+2})\cdots (1-u_{n+k})\Big)\\
& =\prod_{i=1}^{n-k+1}f_k(1-u_i)\Big(\tilHk(y_{n-k+2}, \ldots,
y_{n+k})+f_k(1-u_{n-k+2})\cdots f_k(1-u_{n+1})\Big) \leq
\prod_{i=1}^{n+1}f_k(1-u_i),
\end{align*}
where the conclusion is due to Proposition \ref{L:tildeHLemma}.
\end{proof}

\begin{corollary}\label{C:kgapsupper}
Assume that $A_1, \ldots, A_n$ are independent events with
probabilities $0 \leq u_1\leq u_2\leq \ldots \leq u_n\leq 1$. Then
\[
\Prob\big(\{A_i\}_{i=1}^n \text{ has no $k$-gaps}\big)\leq
\prod_{i=k}^n f_k(1-u_i).
\]
\end{corollary}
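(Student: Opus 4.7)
The plan is to reduce this corollary to Proposition~\ref{P:kgapsupper} by a simple reversal-of-order argument, exploiting the fact that the ``no $k$-gaps'' event is invariant under reversing the indexing of the events.

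More precisely, first I would observe that a $k$-gap in the sequence $A_1,A_2,\ldots,A_n$ is just a block of $k$ consecutive indices $i,i+1,\ldots,i+k-1$ on which none of the $A_j$ occur; reading this block from right to left instead of left to right gives the same set of indices. Therefore the event $\{A_i\}_{i=1}^n$ has no $k$-gaps is literally the same event as the event that the reversed sequence $B_i := A_{n+1-i}$ has no $k$-gaps. The independence of the $B_i$ is inherited from the $A_i$, and their probabilities $v_i := \Prob(B_i) = u_{n+1-i}$ satisfy $v_1 \geq v_2 \geq \cdots \geq v_n$ precisely because the original $u_i$ are increasing.

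Next I would apply Proposition~\ref{P:kgapsupper} directly to $B_1,\ldots,B_n$, since these events now meet its hypothesis of decreasing probabilities. This gives
\[
\Prob\big(\{B_i\}_{i=1}^n \text{ has no $k$-gaps}\big) \leq \prod_{i=1}^{n-k+1} f_k(1-v_i) = \prod_{i=1}^{n-k+1} f_k(1-u_{n+1-i}).
\]
Performing the change of summation index $j = n+1-i$ (so that $i=1$ corresponds to $j=n$ and $i=n-k+1$ corresponds to $j=k$) converts the right-hand side to $\prod_{j=k}^{n} f_k(1-u_j)$, which is exactly the bound claimed in the corollary.

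There is essentially no obstacle here; all the work is done in Proposition~\ref{P:kgapsupper}, and the only thing to verify carefully is that the indices in the product transform correctly under the reversal. Since the $f_k(1-u_i)$ factors appearing in the bound of Proposition~\ref{P:kgapsupper} correspond to the first $n-k+1$ events in the (decreasing) sequence, after reversal they correspond to the last $n-k+1$ events in the original (increasing) sequence, i.e.\ to indices $i=k,k+1,\ldots,n$, matching the corollary statement.
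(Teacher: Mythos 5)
Your proof is correct and follows exactly the paper's own argument: reverse the indexing so the probabilities become decreasing, apply Proposition~\ref{P:kgapsupper}, and re-index the resulting product. The index bookkeeping you carry out (showing $i=1,\ldots,n-k+1$ maps to $j=k,\ldots,n$) matches the paper's one-line computation.
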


\begin{proof}
This follows directly from Proposition \ref{P:kgapsupper}
since the probabilities $u_n, u_{n-1}, \ldots, u_1$ are \\
decreasing. This gives
\begin{align*}
\Prob\Big(\{A_i\}_{i=1}^n\text{ has no $k$-gaps}\Big) & = \Prob\Big(\{A_{n-i+1}\}_{i=1}^n\text{ has no $k$-gaps}\Big) \\
& \leq \prod_{i=1}^{n-k+1}f_k\Big(1-u_{n-i+1}\Big)=\prod_{i=k}^n
f_k(1-u_i).
\end{align*}
\end{proof}

\begin{proof}[Proof of Theorem \ref{T:kgaps2sided}]
The theorem statement combines Proposition \ref{P:kgapslower} and
Corollary \ref{C:kgapsupper}.
\end{proof}

\section{Logarithmic probability estimates}
\label{S:Logprobability} In this section we largely adopt
Holroyd, Liggett, and Romik's notation from \cite{HLR} and define
 $$
 g_k(z):=-\log f_k(e^{-z}),
 $$
 which will allow us to translate products of $f_k$ (as seen in the probability estimates from Section \ref{S:Probability}) into (exponent) sums involving $g_k$.  We catalog a number of useful properties of $g_k$ that we will need throughout the rest of the paper; the $k=1$ and $k=2$ cases of Lemma \ref{L:gkproperties} were proven and used by Gravner and Holroyd in \cite{GH08, Hol03} (with some slight misstatements in parts \ref{L:gk:z0} and \ref{L:gk:g'z0}), although it should be noted that several of their proofs
 used the explicit formulas for $f_1$ and $f_2$, whereas our proofs use only the defining properties of $f_k$ for any $k$.

\begin{lemma}\label{L:gkproperties}
For all $k \geq 1$, the following are true.
\begin{enumerate}
\item
\label{L:gk:dec} The function $g_k$ is decreasing and convex.
\item
\label{L:gk:integral} The function $g_k$ is integrable on
$\R_{\geq 0}$, and satisfies the explicit evaluation
\begin{equation*}
\int\limits_{0}^\infty g_k(z) dz = \lambda_k.
\end{equation*}

\item
\label{L:gk:zinfty} As $z\to\infty$, $g_k$ has the asymptotic
behavior
\begin{equation*}
g_k(z)\sim e^{-kz}.
\end{equation*}
\item
\label{L:gk:z0} As $z\to 0$,
\begin{equation*}
g_k(z)\sim \frac{1}{k}\log z^{-1}.
\end{equation*}
\item
\label{L:gk:g'z0} As $z\to 0$,
\begin{equation*}
g'_k(z)\sim -\frac{1}{kz}.
\end{equation*}
\end{enumerate}
\end{lemma}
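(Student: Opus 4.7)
The plan is to work throughout with the chain-rule identity
\[
g_k'(z) = \frac{x f_k'(x)}{f_k(x)}, \qquad x = e^{-z},
\]
together with the ``long form'' \eqref{E:flongform} of the functional equation, which written in the transparent form $f_k^k(1-f_k) = x^k(1-x)$ implicitly defines $f_k$ in terms of $x$. Each of the five parts then reduces to extracting either global or local information from this equation.

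For part \ref{L:gk:dec}, monotonicity is immediate: $f_k$ is strictly decreasing and positive on $(0,1)$, so $f_k(e^{-z})$ is strictly increasing in $z$, and applying $-\log$ reverses monotonicity. Convexity is equivalent to showing that the map $x \mapsto x f_k'(x)/f_k(x)$ is non-increasing in $x$, after which composing with the decreasing change of variables $x = e^{-z}$ yields $g_k'(z)$ non-decreasing in $z$. I would substitute the explicit formula for $f_k'/f_k$ already derived in \eqref{fkquotient} and, following the same clearing-of-denominators strategy as in the proof of Lemma \ref{TLemma}, reduce the claim to comparing coefficients of like powers of $f_k$ in two polynomial expressions in $x$ and $f_k$. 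I expect this to be the main algebraic obstacle in the lemma.

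Part \ref{L:gk:integral} begins with the substitution $x = e^{-z}$, turning the integral into $\int_0^1 \frac{-\log f_k(x)}{x}\, dx$. Integration by parts with $u = -\log f_k(x)$ and $dv = dx/x$ kills the boundary contributions (since $-\log f_k(x) = O(x^k)$ near $x=0$ by part \ref{L:gk:zinfty} and $\log x = O(1-x)$ near $x=1$) and leaves $\int_0^1 \log x \cdot f_k'(x)/f_k(x)\, dx$. The change of variables $t = f_k(x)$ combined with the implicit relation coming from the functional equation then rewrites this as a dilogarithm-type integral, and standard identities around $\mathrm{Li}_2(1) = \pi^2/6$ produce the precise constant $\pi^2/(3k(k+1))$.

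Parts \ref{L:gk:zinfty}, \ref{L:gk:z0}, and \ref{L:gk:g'z0} are parallel asymptotic calculations. For part \ref{L:gk:zinfty}, as $z \to \infty$ we have $x = e^{-z} \to 0$ and $f_k \to 1$; setting $f_k = 1-\varepsilon$ in $f_k^k(1-f_k) = x^k(1-x)$ gives $\varepsilon \sim x^k$, so $g_k(z) = -\log(1-\varepsilon) \sim \varepsilon \sim e^{-kz}$. For part \ref{L:gk:z0}, as $z \to 0$ we have $x \to 1$ and $f_k \to 0$; the same identity gives $f_k^k \sim (1-x) \sim z$, so $g_k(z) = -\log f_k \sim k^{-1}\log z^{-1}$. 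For part \ref{L:gk:g'z0}, implicit differentiation of $f_k^k(1-f_k) = x^k(1-x)$ yields, to leading order at $x=1$, $f_k'(x) \sim -1/(k f_k^{k-1})$, and hence $g_k'(z) = xf_k'(x)/f_k(x) \sim -1/(k f_k^k) \sim -1/(kz)$ by part \ref{L:gk:z0}. These three asymptotic parts are essentially automatic once the functional equation is brought to bear, so the main obstacles lie in the algebraic work behind parts \ref{L:gk:dec} and \ref{L:gk:integral}.
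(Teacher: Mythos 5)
Your treatment of parts \ref{L:gk:zinfty}, \ref{L:gk:z0}, and \ref{L:gk:g'z0} is correct, and in fact slightly cleaner than the paper's: by plugging $f_k = 1 - \varepsilon$ (respectively $f_k \to 0$) directly into the short form $f_k^k(1-f_k) = x^k(1-x)$, you avoid the paper's repeated differentiation at $x=0$ for part \ref{L:gk:zinfty} and its detour through the auxiliary claim $f_k(x) \sim (1-x)^{1/k}$ in part \ref{L:gk:g'z0}. The identity $g_k'(z) = x f_k'(x)/f_k(x)$ together with $f_k'(x) \sim -1/(k f_k^{k-1})$ and $f_k^k \sim z$ is a short, correct route to \ref{L:gk:g'z0}.

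The significant divergence is in parts \ref{L:gk:dec} and \ref{L:gk:integral}. The paper does not prove these at all; it cites Lemma~15 and Theorem~1 of \cite{HLR} (and the shorter proof of the integral evaluation in \cite{AEPR}). You propose direct proofs instead, which is a more ambitious route, but both of your sketches have genuine gaps. For \ref{L:gk:dec}, the convexity argument is only stated as a plan (``reduce to comparing coefficients of like powers of $f_k$'') and you acknowledge it is the main algebraic obstacle; it is not carried out, and it is not clear a priori that the coefficient comparison is sign-definite the way it is in Lemma~\ref{TLemma}. For \ref{L:gk:integral}, the integration by parts is fine, but the proposed change of variables $t = f_k(x)$ does not decouple anything: since $f_k$ is an involution, substituting $t = f_k(x)$ into $\int_0^1 \log x \cdot \frac{f_k'(x)}{f_k(x)}\,dx$ returns the same integral, so no ``dilogarithm-type'' form emerges and the appeal to $\mathrm{Li}_2(1) = \pi^2/6$ is not justified. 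Actually evaluating this integral requires a more delicate argument (as in \cite{HLR} or \cite{AEPR}), so either that work must be supplied or those two parts should be cited as the paper does.
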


\begin{proof}
\begin{enumerate}
\item
This follows from Lemma 15 of  \cite{HLR}.

\item
This was  proven as Theorem 1 of \cite{HLR}, and a second shorter
proof was later given in \cite{AEPR}.

\item
Differentiate the recursion \eqref{E:fshortform} to obtain
\begin{equation}\label{E:f'}
\left(kf_k^{k-1}-(k+1)f_k^k\right)f'_k=kx^{k-1}-(k+1)x^k.
\end{equation}
The boundary value $f_k(0)=1$ implies that $f'_k(0)=0$.
Repeatedly differentiating \eqref{E:f'} then iteratively shows that
\begin{equation}\label{diffzero}
f''_k(0)=\ldots=f_k^{(k-1)}(0)=0.
\end{equation}
Finally, the $k$-th derivative yields a nonzero term, which is
\begin{equation*}
f_k^{(k)}(0)=-k!.
\end{equation*}
Therefore the Taylor expansion of $f_k$ around $0$ has the form
\begin{equation*}
f_k(x)=1-x^k+O\left(x^{k+1}\right).
\end{equation*}
This leads to the stated assertion, since
\begin{equation*}
g_k(z)=-\log f_k\left(e^{-z}\right)\sim
-\log\left(1-e^{-kz}\right)\sim e^{-kz}
\end{equation*}
as $z\to\infty$.

\item
Taking the logarithm of \eqref{E:fshortform} yields
\begin{equation}\label{E:logequation}
k\log f_k(x)+\log (1-f_k(x))=k\log x+\log(1-x).
\end{equation}
Isolating $- \log f_k\left( e^{-z}\right)= g_k(z)$, we then apply
the limiting value $f_k(1)=0$ to find that
\begin{equation*}
\lim_{z\to 0}\frac{g_k \left(e^{-z}\right)}{\log
\left(z^{-1}\right)} = \lim_{z\to
0}\frac{\frac{1}{k}\left(\log\left(1-f_k\left(e^{-z}\right)\right)-k\log
\left( e^{-z}\right)-\log \left(1-e^{-z}\right)\right)}{\log
\left(z^{-1}\right)} = \lim_{z\to 0}
\frac{\log\left(1-e^{-z}\right)}{k\log z}=\frac{1}{k}.
\end{equation*}

\item
Equation \eqref{E:logequation} implies that
\begin{equation*}
-kg_k(z)+\log
\left(1-f_k\left(e^{-z}\right)\right)=-kz+\log\left(1-e^{-z}\right).
\end{equation*}
Differentiating this equation gives
\begin{equation}
\label{E:gk'}
-kg'_k(z)+\frac{e^{-z}f'_k\left(e^{-z}\right)}{1-f_k\left(e^{-z}\right)}=-k+\frac{e^{-z}}{1-e^{-z}}.
\end{equation}
We next claim that
\begin{equation}\label{E:fder}
f_k(x)\sim (1-x)^{\frac{1}{k}}\quad\text{ as } x\to 1.
\end{equation}
This is enough to conclude \ref{L:gk:g'z0}. Indeed, by \eqref{E:f'}
we have
\[
f'_k(x)=\frac{kx^{k-1}-(k+1)x^k}{kf_k^{k-1}(x)-(k+1)f_k^k(x)}.
\]
As $z\to 0$ we then apply \eqref{E:fder} to get
\[
f'_k(e^{-z})\ll
\frac{1}{f^{k-1}\left(e^{-z}\right)}\ll\frac{1}{\left(1-e^{-z}\right)^{\frac{k-1}{k}}}\sim\frac{1}{z^{\frac{k-1}{k}}}.
\]
Therefore in \eqref{E:gk'} we have the asymptotic equality
\[
-kg'_k(z)+f'_k\left(e^{-z}\right)\sim -k+\frac{1}{z},
\]
yielding
$$
g'_k(z)\sim -\frac{1}{kz},
$$
We finish the proof by verifying \eqref{E:fder}. Taking the
limit as $z \to 0$ in \eqref{E:logequation} and using the boundary
value $f_k(1) = 0$, we find that (again, $x=e^{-z}$)
\[
k\log f_k(x)\sim\log(1-x),
\]
which gives \eqref{E:logequation}.
\end{enumerate}
\end{proof}

\section{Lower Bound}
\label{S:Lower} In this section we prove the lower bound in Theorem
\ref{T:maintheorem} for arbitrary $k$ by generalizing the
combinatorial construction used in \cite{GH08}, and then using the new
results from Section \ref{S:Logprobability} to help estimate the
corresponding probabilities.  The general idea is to consider
configurations that are {\it sufficient} for growth and that occur
with large enough probability to give the tight lower bound.  It
should be noted that this construction also gives lower bounds for
the original, non-localized $k$-percolation models, as any
configuration with localized growth starting from the origin is
clearly also sufficient for unrestricted growth (the same is not
true of the upper bound in Section \ref{S:Localupper}, as localized
growth is not a {\it necessary} condition for unrestricted growth).

We first set some notation for rectangles in $\Z^2$.  For a
rectangle $R=\{a, \ldots, c\}\times\{b, \ldots, d\}$, we denote its
{\it dimensions} by
\begin{equation*}
\dim (R):=\left(c-a+1, d-b+1\right).
\end{equation*}
We also let $R(a,b)$ denote a rectangle with dimensions $(a,b)$
whose position may or may not be specified. Moreover, we visualize
the base square at the origin as the lower-left
corner of the northeast quadrant of the lattice $\Z^2$.

Following Gravner and Holroyd's basic argument in \cite{GH08}, we
construct classes of configurations that always lead to indefinite
growth.  Figure \ref{F:DJ} illustrates the two possibilities that we
consider for the growth of a rectangle $R(a,a)$ to one of size
$R(b,b)$.  The first is ``diagonal'' growth, where $R(a,a)$ grows to
$R(a+1, a+1)$, then  to $R(a+2, a+2)$, and so on until $R(b,b)$ is
active (with deviations from the diagonal of at most distance $k$);
this sort of growth was shown to give the main (logarithmic)
term of Theorem \ref{T:maintheorem} in \cite{Hol03, HLR}.  The
second sort of growth is horizontally ``skew'' growth, where growth
proceeds first in the horizontal direction only, and then continues
in the vertical direction only.  The inclusion of the second growth
event will be enough to increase the total probability by the
claimed factor of $\exp\left(c_1 s^{-1/2}\right).$

\begin{figure}[here]
\centering \scalebox{0.7}{\input{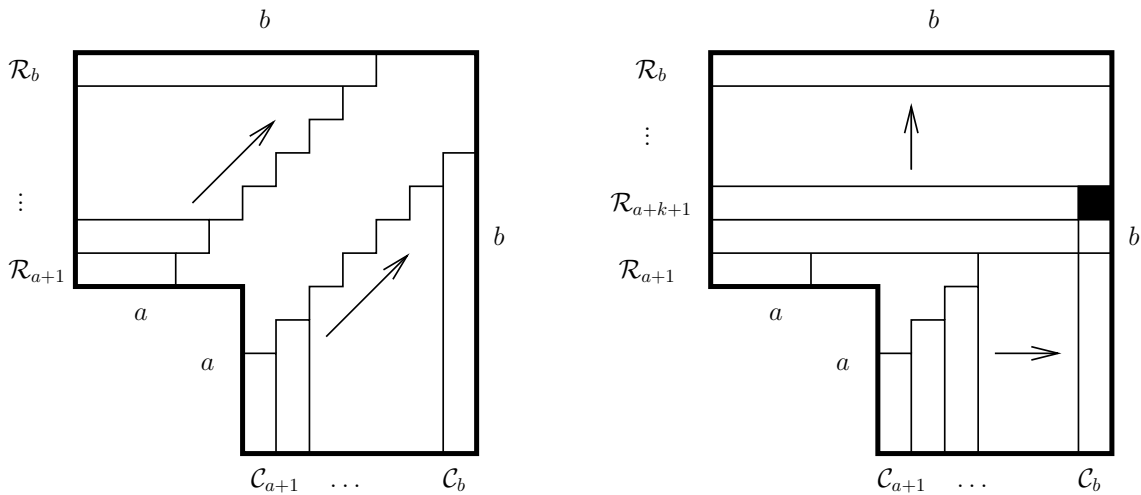}} \caption{The columns
and rows that define the events $\Dk(a,b)$ and $\Jk(a,b)$,
respectively.} \label{F:DJ}
\end{figure}

\begin{definition}
\label{D:Dk} Consider the ``stair-step'' columns and rows defined
such that for $i \geq k+1$, the column $\calC_i$ at a distance $i$
to the right of the origin has height $i - k$, and the row $\calR_i$ at
height $i$ also has width $i-k$.  If $b > a \geq k$, then the {\it
diagonal growth event} $\Dk(a, b)$ is the event that the columns
$\{\mathcal{C}_{a+1}, \ldots, \mathcal{C}_b\}$ and rows
$\{\mathcal{R}_{a+1}, \ldots, \mathcal{R}_b\}$ have no $k$-gaps in
the given configuration.
\end{definition}

\begin{definition}
\label{D:Jk} Suppose that $b - a \geq k+2$, let $\calC_{a+i}$ have
height $a+i-k$ for $1 \leq i \leq k$, and let all other $\calC_i$ have
height $a+1$ ($a+k+1 \leq i \leq b$).  For the rows, let $\calR_{a+1}$ have width $a-k+1$, let
$\calR_i$ have width $b-1$ for $a+2 \leq i \leq a+k+1$, and let
$\calR_i$ have width $b$ for $a+k+2 \leq i \leq b$.  The {\it
(horizontally) skew event}  $\Jk(a, b)$ is the event that the
following occur:
\begin{itemize}
\item $\calR_{a+1}, \calC_{a+1}, \calR_b$ and $\calC_b$ are nonempty,
\item $\calR_{a+2}, \ldots, \calR_{a+k+1}$ are empty,
\item the cell $(b, a+k+1)$ is occupied,
\item $\{\calC_{a+2}, \ldots, \calC_{b-1}\}$ and $\{\calR_{a+k+2}, \ldots, \calR_{b-1}\}$ have no $k$-gaps.
\end{itemize}
\end{definition}
It is clear that both events lead to further growth as stated in the following result.
\begin{proposition}
\label{P:DJ} Suppose that we are given a configuration $\calC$, and
consider only those rectangles whose lower-left corner is at $0$.
\begin{enumerate}
\item
If $R(a,a)$ eventually becomes active and $\Dk(a,b)$ occurs, then
$R(b-s,b-t)$ also becomes active for some $0 \leq s,t \leq k-1$.
\item
If $R(a-s, a-t)$ eventually becomes active for some $0 \leq s, t
\leq k-1$ and $\Jk(a,b)$ occurs, then $R(b,b)$ also becomes active.
\end{enumerate}
\end{proposition}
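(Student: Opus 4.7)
The plan is to verify each part deterministically from the growth rules. The initial rectangle together with the structured occupation patterns required by $\Dk(a,b)$ or $\Jk(a,b)$ will force the claimed active region by an induction on the column/row index, with the stair-step heights in Definitions \ref{D:Dk} and \ref{D:Jk} ensuring that every cell whose activation is needed has either an active neighbor within $\ell^1$-distance $k$ (so the occupied-to-active rule fires) or at least $k$ active sites in $N_k$ (so the empty-to-active rule fires). The proof reduces to a careful bookkeeping that traces these local activations in order.

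For part (i), I would induct on $j \in \{1,\ldots,b-a\}$ with the invariant that after the dynamics have acted on the occupation data in $\calC_{a+1},\ldots,\calC_{a+j}$ and $\calR_{a+1},\ldots,\calR_{a+j}$, the active region contains a rectangle $R(a+j-s_j,\,a+j-t_j)$ with $0\leq s_j,t_j\leq k-1$. If $\calC_{a+j}$ is non-empty, an occupied cell in it lies at $\ell^1$-distance at most $k$ from the current rectangle (the stair-step height $a+j-k$ is chosen exactly for this), so it activates by the first rule, and the remaining empty cells of $\calC_{a+j}$ then cascade one by one, each using $k-1$ active horizontal neighbors from the rectangle plus one active vertical neighbor in the column. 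If $\calC_{a+j}$ is empty, the no-$k$-gap condition guarantees non-emptiness within the next $k-1$ columns, so the lag $s_j$ grows by at most $1$ and never exceeds $k-1$. A symmetric argument handles the rows, and the two directions can be interleaved since the column and row stair-step regions meet only along the diagonal. Setting $j = b-a$ yields the claimed $R(b-s,b-t)$.

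For part (ii), I would split the growth into two phases separated by the activation of the special cell $(b,a+k+1)$. In the first phase, starting from $R(a-s,a-t)$, the columns $\calC_{a+1},\ldots,\calC_b$ together with their no-$k$-gap condition fill in a horizontal strip of dimensions roughly $b\times(a+1)$ by the same column-by-column mechanism as in part (i), since the stair-step heights rise to $a$ and then level off at $a+1$. The occupied cell $(b,a+k+1)$ then activates, being within $\ell^1$-distance $k$ of the now-active $(b,a+1)$. In the second phase, the non-empty rows $\calR_{a+k+2},\ldots,\calR_b$ together with the no-$k$-gap condition drive vertical growth up to height $b$ by a row analogue of part (i), and a final cascade fills in the intentionally empty gap rows $\calR_{a+2},\ldots,\calR_{a+k+1}$ using the active horizontal strip below and the newly grown active region above as donors of active neighbors.

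The principal obstacle will be the per-cell verification that the empty-to-active rule succeeds on cells not touched by any occupied seed, especially at the top of each stair-step column and inside the gap rows of part (ii). The stair-step design and the no-$k$-gap conditions are chosen precisely so these checks go through, but the cleanest formulation will require spelling out the inductive invariant explicitly enough that each step reduces to an elementary neighborhood count. Some additional care is needed for the $k=1$ modified and Frob\"ose models, where the neighborhood rules differ; in those cases the argument simplifies because the no-$1$-gap condition forces every column and row in the ranges considered to be non-empty.
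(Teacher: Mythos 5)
The paper itself gives no proof of Proposition \ref{P:DJ} --- it is dismissed with ``It is clear that both events lead to further growth'' --- so you are attempting to fill a gap that the authors left implicit. Your overall strategy (deterministic column-by-column bookkeeping with an inductive invariant tracking a lagged active rectangle $R(a+j-s_j,\,a+j-t_j)$) is the natural one, and your observation that the stair-step height $a+j-k$ is chosen so that any occupied seed in $\calC_{a+j}$ sits at height at most $a+j-1-t_{j-1}$ (hence within $\ell^1$-distance $1+s_{j-1}\leq k$ of the active rectangle) is correct.

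However, the central cascade step is misstated in a way that matters. You claim that once the occupied cell in $\calC_{a+j}$ activates, ``the remaining empty cells of $\calC_{a+j}$ then cascade one by one, each using $k-1$ active horizontal neighbors from the rectangle.'' This is only valid when $s_{j-1}=0$. If the previous $s_{j-1}\geq 1$ columns were empty, a cell $(a+j,y)$ has only $k-1-s_{j-1}$ active horizontal neighbors in $N_k$, since the columns $a+j-1,\dots,a+j-s_{j-1}$ are not yet active; the remaining neighbors are in the still-empty gap. The actual mechanism is a \emph{bridge}: after the occupied seed $(a+j,y_0)$ activates, the cell $(a+j-s_{j-1},y_0)$ --- adjacent to the active rectangle --- acquires $k-1$ active left neighbors from the rectangle plus the active right neighbor $(a+j,y_0)$ (which lies at horizontal distance $s_{j-1}\leq k-1$, hence inside $N_k$), so it activates first. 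One then fills the gap columns left-to-right in the row $y=y_0$, after which each newly activated column can fill vertically using $k-1$ left neighbors from the columns already filled. Only then does $\calC_{a+j}$ itself cascade and the lag reset to $0$. You flag ``per-cell verification'' as the principal obstacle, which is the right instinct, but without this bridging step your invariant cannot be maintained when the lag is positive. The same omission recurs in your second-phase argument for part (ii), where the vertical growth from the newly activated $(b,a+k+1)$ must bridge across the deliberately empty rows $\calR_{a+2},\dots,\calR_{a+k+1}$ before the row cascade of $\calR_{a+k+2},\dots,\calR_{b-1}$ can begin. Supplying the bridge lemma explicitly would close the gap and make the argument complete.
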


\begin{definition} \label{EkDef}
For $k\leq a_1 \leq b_1 \leq\ldots a_m\leq b_m \leq L$ with
$b_i-a_i\geq k+2$ for all $i$, define the {\it growth event}
corresponding to these parameters as
\begin{align*}
E_k(a_1, b_1, \ldots, a_m, b_m):= & \Dk(k, a_1)\cap \bigcap_{i=1}^m
\Jk(a_i, b_i)
\cap \bigcap_{i=1}^{m-1} \Dk(b_i, a_{i+1})\cap \Dk(b_m, L-1)\\
& \bigcap\left\{(k\times k) \text{ lower left rectangle and cells
}(1, L-1), (L-1, 1)\text{ are occupied}\right\}.
\end{align*}
\end{definition}

\begin{lemma}\label{L:disjointLemma}
Suppose that $\{a_i, b_i\}$ satisfy the  conditions in Definition
\ref{EkDef}.
\begin{enumerate}
\item
The various events appearing in the definition of single occurrence
of $E_k(a_1, \ldots, b_m)$ are independent.
\item
If $E_k(a_1, \ldots, b_m)$ occurs, then $R(L)$ is eventually active.
\item
For different choices of $a_1, \ldots, b_m$ the events $E_k(a_1,
\ldots, b_m)$ are disjoint.
\end{enumerate}
\end{lemma}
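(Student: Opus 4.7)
The plan is to verify each part directly from Definitions \ref{D:Dk}, \ref{D:Jk}, and \ref{EkDef}, treating the constituent events as conditions on (ideally) pairwise disjoint collections of lattice cells.

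For (i), I will check that the stair-step columns and rows underlying the successive $\Dk$ and $\Jk$ events, together with the base $k\times k$ corner and the distinguished cells $(1,L-1)$ and $(L-1,1)$, involve pairwise disjoint subsets of $\Z^2$. This is a bookkeeping exercise: in $\Dk(a,b)$ the relevant $\calC_i$ and $\calR_i$ live on index strips strictly between $a$ and $b$, and in $\Jk(a,b)$ they live on strips strictly between $a$ and $b$ as well (the row strip $\{a+2,\ldots,b-1\}$ and column strip $\{a+1,\ldots,b\}$, with the one extra cell $(b,a+k+1)$). Because the parameters satisfy $k\leq a_1\leq b_1\leq \cdots\leq b_m\leq L$, these strips do not meet. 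Independence then follows since under $\Prob$ distinct cells are occupied independently.

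For (ii), I will iterate Proposition \ref{P:DJ}. The activity of the origin, combined with every cell of the base $k\times k$ corner being occupied and within $\ell^1$-distance $k$ of the origin, activates $R(k,k)$ by the occupied-cell growth rule. Then $\Dk(k,a_1)$ yields an active $R(a_1-s_1,a_1-t_1)$ with $0\leq s_1,t_1\leq k-1$ by Proposition \ref{P:DJ}(i); $\Jk(a_1,b_1)$ upgrades this to $R(b_1,b_1)$ by Proposition \ref{P:DJ}(ii); alternating through the remaining $\Dk(b_i,a_{i+1})$ and $\Jk(a_{i+1},b_{i+1})$ events produces an active $R(b_m,b_m)$. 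The final $\Dk(b_m,L-1)$ yields an active $R(L-1-s,L-1-t)$, after which the two occupied corner cells $(1,L-1)$ and $(L-1,1)$ are within $\ell^1$-distance $k$ of this active region and seed the activation of the remaining boundary strips, filling out $R(L,L)$.

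For (iii), I will argue that a configuration in $E_k(a_1,\ldots,b_m)$ uniquely determines the parameter sequence, so distinct choices give disjoint events. The skew event $\Jk(a_i,b_i)$ forces a block of $k$ consecutive entirely empty rows at heights $a_i+2,\ldots,a_i+k+1$ (of width $b_i-1$), whereas every $\Dk$ event requires the corresponding rows to contain no $k$-gap and hence forbids exactly this pattern. Thus the set of heights at which maximal blocks of $k$ consecutive empty rows appear recovers the $a_i$; the width of such an empty block together with the occupied status of the single cell $(b_i,a_i+k+1)$ recovers each $b_i$. Two different parameter sequences therefore correspond to configurations with different empty-row patterns, so the events are disjoint.

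The main obstacle is the geometric bookkeeping that threads all three parts: one must verify that the stair-step dimensions in Definitions \ref{D:Dk} and \ref{D:Jk} are calibrated so that no cell is claimed by more than one event in (i), that each invocation of Proposition \ref{P:DJ} in (ii) has its hypotheses satisfied after the previous step (in particular that the small slack $s_i,t_i\leq k-1$ does not spoil the next application), and that the empty-row signature of $\Jk$ in (iii) is genuinely incompatible with the no-$k$-gap condition in the surrounding $\Dk$ regions so that the parameters can be decoded unambiguously from the configuration.
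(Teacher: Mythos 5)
Your proposal matches the paper's proof in approach and structure: parts (i) and (ii) are verified directly from the definitions and by iterating Proposition \ref{P:DJ}, and part (iii) is handled by the same decoding argument (locating the first $k$-gap among the stair-step rows to recover $a_1$, then reading off $b_1$ from the occupied cell in the last gap row, and iterating). The paper states (i) and (ii) even more tersely than you do and spells out only the decoding step for (iii), but the substance is identical.
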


\begin{proof}
\begin{enumerate}
\item
Follows immediately from Definitions \ref{D:Dk} and \ref{D:Jk}.
\item
Follows from Proposition \ref{P:DJ}.
\item
Let $\mathcal{R}_n, \mathcal{R}_{n+1}, \ldots, \mathcal{R}_{n+k-1}$
be the first $k$-gap among the stair-step rows. Then
$\mathcal{R}_{n-1}$ is nonempty and begins a $\Jk$ event with $a_1 =
n-1$.  Furthermore, the event ends with the first nonempty cell in
the $(n+k-1)$-th row; the column position of this cell gives the value of $b_1$.
Following this procedure iteratively uniquely determines all $a_i$
and $b_i$.
\end{enumerate}
\end{proof}

\noindent We next bound the probabilities of the events $\Dk$ and
$\Jk$ in terms of the function $g_k$.
\begin{lemma}\label{L:DkJkestimate}
The probability of the growth events satisfies the following lower
bounds.
\begin{enumerate}
\item
If $b > a \geq k$, then
\begin{equation*}
\displaystyle \Prob\left(\Dk(a,b)\right) \geq \exp\left(-2 \sum_{i=a
- (k-1)}^{b-k}g_k(is)\right).
\end{equation*}
\item
Let $c_-<c_+$ be positive constants, $s\in \left(0,
\frac{1}{2}\right)$, and $b \geq a + k + 2$, with $a, b\in
\left[c_-s^{-1}, c_+s^{-1}\right]$. Then
\begin{equation*}
\Prob\left(\Jk(a, b)\right) \mathop{\gg}_{\text{\emph{unif}}} s \: \exp\left(g'_k(c_-)s(b-a)^2 - 2
\sum_{i=a - (k-1)}^{b-k}g_k(is) \right),
\end{equation*}
where the asymptotic inequality is uniform over all $a, b$ in the given range.
\end{enumerate}
\end{lemma}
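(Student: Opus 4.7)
The plan for part (i) is to apply Proposition~\ref{P:kgapslower} separately to the stair-step columns and rows. First I observe that any cell in $\calC_i$ has horizontal coordinate $i$ and lies at height at most $i-k$, while any cell in $\calR_j$ has vertical coordinate $j$ and horizontal coordinate at most $j-k$; a common cell would force $i \leq j-k$ and $j \leq i-k$ simultaneously, which is impossible. Thus the column-no-$k$-gap event and the row-no-$k$-gap event are independent. Since $\calC_i$ is nonempty with probability $1 - e^{-(i-k)s}$, increasing in $i$, Proposition~\ref{P:kgapslower} yields $\Prob(\text{cols no }k\text{-gaps}) \geq \prod_{i=a+1}^{b} f_k(e^{-(i-k)s}) = \exp\left(-\sum_{j=a-k+1}^{b-k} g_k(js)\right)$, and the identical bound for rows combines to give the claim.

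For part (ii), I decompose $\Jk(a,b)$ into independent sub-events supported on disjoint cell sets: the four boundary ``nonempty'' events for $\calR_{a+1}, \calC_{a+1}, \calR_b, \calC_b$; the $k$ empty rows $\calR_{a+2}, \ldots, \calR_{a+k+1}$; the single occupied corner cell $(b, a+k+1)$; and the no-$k$-gap events in $\{\calC_{a+2}, \ldots, \calC_{b-1}\}$ and in $\{\calR_{a+k+2}, \ldots, \calR_{b-1}\}$. Under the hypothesis $as, bs \in [c_-, c_+]$, each boundary nonempty event has probability bounded below by a uniform positive constant, and the $k$ empty rows contribute $e^{-k(b-1)s} \geq e^{-kc_+}$, also a uniform positive constant. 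The corner cell contributes $1 - e^{-s} \gg s$, which supplies the explicit $s$ factor. Proposition~\ref{P:kgapslower} applied to the two no-$k$-gap events (with column heights $h_i = i-k$ for $a+2 \leq i \leq a+k$ and $h_i = a+1$ for $a+k+1 \leq i \leq b-1$, and all relevant row widths equal to $b$) then produces the joint lower bound
\begin{equation*}
\exp\left(-\sum_{j=a-k+2}^{a} g_k(js) - (b-a-k-1) g_k((a+1)s) - (b-a-k-2) g_k(bs)\right).
\end{equation*}

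The remaining task is to compare this exponent with $-2\sum_{j=a-k+1}^{b-k} g_k(js) + g'_k(c_-) s(b-a)^2$. Absorbing the $k$ uniformly bounded terms $g_k(js)$ for $j \in [a-k+1, a]$ into an additive constant, the comparison reduces to showing
\begin{equation*}
(b-a-k-1) g_k((a+1)s) + (b-a-k-2) g_k(bs) \leq 2\sum_{j=a+1}^{b-k} g_k(js) + |g'_k(c_-)| s(b-a)^2 + O(1).
\end{equation*}
I would establish this by summing the two convexity tangent-line inequalities (Lemma~\ref{L:gkproperties}(i))
\begin{equation*}
g_k(js) \geq g_k((a+1)s) + g'_k((a+1)s)(j-a-1)s \quad \text{and} \quad g_k(js) \geq g_k(bs) + g'_k(bs)(j-b)s
\end{equation*}
over $j = a+1, \ldots, b-k$. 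Using $\sum(j-a-1) = \tfrac{(b-a)^2}{2} + O(b-a)$ and $\sum(j-b) = -\tfrac{(b-a)^2}{2} + O(b-a)$, the sum of the two resulting bounds becomes
\begin{equation*}
2\sum_{j=a+1}^{b-k} g_k(js) \geq (b-a-k)\bigl[g_k((a+1)s) + g_k(bs)\bigr] - \bigl(|g'_k((a+1)s)| - |g'_k(bs)|\bigr) \tfrac{s(b-a)^2}{2} + O(s(b-a)).
\end{equation*}
Since $g'_k$ is increasing and non-positive on $[c_-, c_+]$, the difference of magnitudes is non-negative and bounded above by $|g'_k(c_-)|$; combined with $s(b-a) = O(1)$ and the identity $(b-a-k)[g_k((a+1)s) + g_k(bs)] - (b-a-k-1)g_k((a+1)s) - (b-a-k-2)g_k(bs) = g_k((a+1)s) + 2g_k(bs) = O(1)$, this delivers the required inequality (in fact with $|g'_k(c_-)|/2$, hence a fortiori with $|g'_k(c_-)|$). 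The main obstacle is the careful bookkeeping of these two tangent-line bounds and the verification that all lower-order corrections can be absorbed into the uniform constant implicit in $\gg_{\text{unif}}$.
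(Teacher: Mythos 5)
Your proof of part (i) is essentially identical to the paper's: apply Proposition~\ref{P:kgapslower} to the stair-step columns and rows separately and use the fact that they occupy disjoint cells. The paper simply states this follows directly, while you spell out the disjointness check; both are correct.

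For part (ii), your initial decomposition of $\Jk(a,b)$ into independent factors (boundary, empty rows, corner cell, and the two no-$k$-gap events estimated via Proposition~\ref{P:kgapslower}) reproduces exactly the same explicit lower bound $\gg s\exp(-S)$ that the paper derives, with the same exponent $S = \sum_{j=a-k+2}^{a}g_k(js) + (b-a-k-1)g_k((a+1)s) + (b-a-k-2)g_k(bs)$. Where you diverge is the final comparison of $-S$ against the target exponent $g'_k(c_-)s(b-a)^2 - 2\sum_{i=a-k+1}^{b-k}g_k(is)$. The paper first discards favorable constants to reach the cleaner bound $s\exp(-(b-a)(g_k(as)+g_k(bs)))$, then splits this algebraically as $-(b-a)(g_k(as)-g_k(bs)) - 2(b-a)g_k(bs)$, bounds the second piece against $-2\sum g_k(is)$ by monotonicity of $g_k$, and handles the first piece by a single secant-slope convexity estimate $g_k(as)-g_k(bs)\leq -g'_k(c_-)s(b-a)$. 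You instead sum the two tangent-line inequalities for $g_k$ at $(a+1)s$ and at $bs$ over $j=a+1,\ldots,b-k$ and compare the resulting $\tfrac{(b-a)^2}{2}$-coefficients, which requires tracking a difference $|g'_k((a+1)s)|-|g'_k(bs)|$ but ultimately lands on the same $|g'_k(c_-)|s(b-a)^2$ budget (in fact with room to spare, as you note). Both arguments rest on the same two ingredients from Lemma~\ref{L:gkproperties}(i) (convexity and monotonic decrease) plus the uniform boundedness of $g_k$ on $[c_-,c_+]$; yours is a bit more bookkeeping-heavy but makes the quadratic error term appear organically from the tangent-line remainders, while the paper's route is shorter at the cost of an extra preliminary simplification. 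Your argument is correct.
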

\begin{proof}
\begin{enumerate}
\item
This follows directly from Proposition \ref{P:kgapslower} and the
definitions of $\Dk(a,b)$ and $g_k$.
\item
From the definition of $\Jk(a,b)$ and Proposition \ref{P:kgapslower}
we obtain
\begin{align*}
\Prob\big( & \Jk(a, b)\big) \geq q^{k(b-1)}\left(1-q\right)\left(1-q^{a-k+1}\right)^2
\left(1-q^{a+1}\right)\left(1-q^b \right) \\
& \quad \times
\exp\Big(-g_k((a-k+2)s) - \dots - g_k(as) -\left(b-a-k-1\right)g_k\left((a+1)s\right) \\ 
& \qquad \qquad \qquad \qquad \qquad \qquad - \left(b-a-k-2\right)g_k(bs)\Big),
\end{align*}
where the first $q$-power is for empty rows, the next several factors are for
occupied rows and columns, and the final exponential terms are for the gap
conditions among the remaining rows and columns. In the given ranges
of $a, b$, the powers $q^a$ and $q^b$ may be treated as (uniform) asymptotic
constants, as can the single terms $g_k((a-k+i)s)$ (of which there are $k-2$).  
Also, $1-q$ is asymptotically $s$, so the overall 
bound becomes
\begin{align}
\label{E:PJk} \Prob &\Big(\Jk(a, b)\Big) \\
& \mathop{\gg}_{\text{unif}}  s \: \exp\Big(-(b-a)\Big(g_k\left((a+1)s\right)+g_k(bs)\Big) + (k+1) g_k\left((a+1)s\right)+(k+2)g_k(bs)\Big) \notag \\
& \geq s \: \exp\Big(-(b-a)\left(g_k(as)+g_k(bs)\right)\Big). \notag
\end{align}
The second inequality holds since the function $g_k$ is decreasing. It is also true that
\[
\exp\left(-\sum\limits_{i=a-(k-1)}^{b-k}g_k(is)\right)\leq
\exp(-(b-a)g_k(bs)),
\]
again since $g_k$ is decreasing.  Using this in \eqref{E:PJk}, we have
\begin{align*}
\Prob\big(\Jk(a, b)\big)
& \mathop{\gg}_{\text{unif}}  s\exp\left(-(b-a)\left(g_k(as)-g_k(bs)\right) -2 \sum\limits_{i=a-(k-1)}^{b-k}g_k(is)\right) \\
& \geq s\exp\left((b-a)g_k'(c_-)(bs-as) -2
\sum\limits_{i=a-(k-1)}^{b-k}g_k(is)\right),
\end{align*}
where we have used the convexity of $g_k$ for the final
approximation.
\end{enumerate}
\end{proof}

With the combinatorial preliminaries finished, we now prove the
lower bound by selecting a ``window'' of size proportional to $s^{-1}$.
\begin{proof}[Proof of lower bound in Theorem \ref{T:maintheorem}]
Let $m:=\left\lfloor s^{-\frac{1}{2}}M\right\rfloor$, where $M < 1$
is a positive constant that will be chosen later, and suppose that
we have a sequence of parameters that satisfy
\begin{equation*}
s^{-1}<a_1\leq b_1\leq \ldots\leq a_m\leq b_m < \left\lfloor 2s^{-1}
\right\rfloor =: L,
\end{equation*}
with $b_i-a_i \in \left[k+2, s^{-\frac{1}{2}}\right]$ for all $i$.
Lemma \ref{L:DkJkestimate} shows that there is a constant $c>0$
such that  we have the following lower bound for the probability of a
growth event (note that the asymptotic bounds are uniform across all $a_i, b_i$):
\begin{align}
\label{E:ProbE}
\Prob & \big(E_k(a_1,\ldots, b_m)\big) \\
& =\Prob\big(\Dk(k, a_1)\big)\prod_{i=1}^m \Prob\big(\Jk(a_i,
b_i)\big)\prod_{i=1}^{m-1}\Prob\big(\Dk(b_i, a_{i+1})\big)
\Prob\big(\Dk(b_m, L)\big)\Prob\left(k^2+2\text{ active sites}\right) \notag \\
& \mathop{\gg}_{\text{unif}} s^{k^2+2}\exp\Bigg(-2\sum\limits_{i=1}^{L-k}g_k(is)\Bigg)s^m c^m \exp\Bigg(-cs\sum\limits_{i=1}^m(b_i-a_i)^2\Bigg) \notag \\
& \mathop{\gg}_{\text{unif}}  s^m c^m \exp\left(-2\sum\limits_{i=1}^{L-k}g_k(is) - (k^2 + 2)
\log s^{-1}\right), \notag
\end{align}
where for the last estimate we absorbed the last factor into the constant power $c^m$ by using the fact that
$b_i-a_i \in \left[k+2, s^{-\frac{1}{2}}\right]$. The number of
possible sequences $\{a_i, b_i\}$ is at least
\begin{align}
\label{E:choose-fct} \left(
\begin{matrix}
\left\lfloor s^{-1}-m s^{-\frac{1}{2}}\right \rfloor\\
m
\end{matrix}
\right) & \left(s^{-\frac{1}{2}}-(k+2)\right)^m \gg \left(
\begin{matrix}
\left\lfloor s^{-1}(1-M)\right\rfloor\\
m
\end{matrix}
\right)
\left(s^{-\frac{1}{2}}\right)^m \\
& \gg \left(s^{-1}(1-M)\right)^m
\left(\frac{s^{-\frac{1}{2}}}{m}\right)^m \gg
s^{-m}\left(\frac{1-M}{M}\right)^m, \notag
\end{align}
where the second approximation comes from Stirling's formula.

Finally, note that by Lemma \ref{L:disjointLemma} the event $E_k(a_1,
\ldots, b_m)$ only guarantees growth out to $R(L, L)$.  In order to
achieve indefinite growth, we add the event $\Dk(L, \infty)$ as
well, which means (in a slight abuse of notation) that there are no
$k$-gaps in $\{\calC_{L+1}, \calC_{L+2}, \dots\}$ and
$\{\calR_{L+1}, \calR_{L+2}, \dots\}$.  This means that the entire
northeast quadrant will become active, and we achieve indefinite
growth in the whole plane by restricting further to the probability $1$ condition
that there are no empty semi-infinite lines in $\Z^2$ (this same
argument was used by Gravner and Holroyd).

Combining \eqref{E:ProbE} and \eqref{E:choose-fct}, we obtain the estimate
\begin{align*}
\Prob(\text{indefinite growth}) & \geq
\sum_{\substack{\text{sequences } \{a_i,
b_i\}}}\Prob\big(E_k(a_1,\ldots, b_m)\big) \cdot
\Prob\big(\Dk(L,\infty)\big) \\
& \gg s^{-m}\left(\frac{1-M}{M}\right)^m s^m c^m \exp\Big(-2\lambda_k s^{-1} - (k^2 + 2) \log s^{-1}\Big)\\
& \gg \left(c \cdot \frac{1-M}{M}\right)^m \exp\Big(-2\lambda_k
s^{-1} - (k^2 + 2) \log s^{-1}\Big).
\end{align*}
Choosing $M$ sufficiently
small so that $c \cdot \frac{1-M}{M}>1$, we obtain the lower bound
\begin{equation*}
\exp\left(-2\lambda_k s^{-1}+cs^{-\frac{1}{2}}\right),
\end{equation*}
which completes the proof.
\end{proof}

\section{Local upper bound}
\label{S:Localupper}

We now turn to the upper bound in Theorem \ref{T:maintheorem}. Part
of our proof follows Gravner and Holroyd's approach to the cases
$k=1$ and $k=2$, and we improve several of their choices of
parameters in order to achieve a tighter second-order term.  We
proceed through the technical preliminaries with unspecified
parameters in order to show that our final choices are optimal for
this approach. Our more significant contribution is a new
combinatorial characterization of necessary growth conditions that
allows us to adapt Gravner and Holroyd's scaling arguments to the
case of general $k$.

We introduce ``rectangle growth sequences'' in order to encode the row and column
conditions that occur in growing configurations.  These sequences naturally contain
a generalization of Gravner and Holroyd's ``good sequences'' from \cite{GH08}.  Using some intricate
combinatorial arguments, the probability of such a subsequence can
be bounded, as can the total possible number of subsequences,
and the combination of these estimates leads to the overall upper bounds.

For $k=1$, some of the arguments in Sections \ref{S:Uppercomb} --
\ref{S:Upperproof} only apply to the modified model, and we explain
the minor changes that are necessary for the $k=1$ Frob\"ose model
in Section \ref{S:UpperFrob}.

\subsection{Preliminary combinatorial setup}
\label{S:Uppercomb} We begin with the unspecified parameters; there
are several important rough asymptotic properties that we will need
for these parameters, so we define and list them now.
\begin{definition}
\label{D:param} The parameters $A$ ({\it lower} dimension), $B$
({\it upper} dimension), and $D$ ({\it growth ratio}) are assumed to
be positive values that satisfy the following limiting relations as
$s \rightarrow 0$:
\begin{center}
\begin{tabular}{|l||c|c|c|} \hline
Parameter name & $A$ & $B$ & $D$ \\ \hline Limiting value &
$\infty$ & $\infty$ & $0$ \\ \hline Asymptotics &  $\log(As) \sim
\log s$ & $\log \left(A^{-1} B\right) \ll \log s^{-1}$ &   $BD \gg 0$ \\ \hline
Inequalities & $A < B/2$  & $B \geq s^{-1} \log s^{-1}$ & $D < 1$ \\
\hline
\end{tabular}
\end{center}
\end{definition}

Next, we define the new combinatorial structure that we will use to
encode and approximate the spread of active sites in the
$k$-percolation model. \noindent
\begin{definition}
A \emph{rectangle growth sequence} for an initial configuration
$\calC$ on $\Z^2$ is denoted by $\calS(\calC)$, and is defined to be
a sequence of rectangles
\[
\calS(\calC):= \; \left\{0=R'_1\subsetneq \dots \subsetneq R'_m
\subsetneq \dots\right\}
\]
such that
\begin{enumerate}
\item
$S(\calC)$ is empty if the origin is not active in $\calC$, and
otherwise $R'_1 = \{0\}.$
\item
Each $R'_i$ has no $k$-gaps; in other words, there are no $k$
consecutive empty rows or columns in $R'_i$.
\item
$R'_{i+1}\setminus R'_i$ is contained in $\Shell(R'_i)$, where
$\Shell(R)$ is defined to be the width $1$ boundary around any
rectangle $R$.
\end{enumerate}
\end{definition}
\noindent Note that there may be many possible choices for the
sequence $\calS(\calC)$ depending on $\calC$, and furthermore, that
the sequence may be either finite or infinite.  However, we are
primarily interested in the class of {\it maximal} growth sequences
in the current study, as we need to encode the fact that the percolation process proceeds so long as
any growth is possible.  Here maximality is defined in terms of the
partial ordering of growth sequences given by (rectangle)
containment, and it is straightforward to see that there is a
well-defined ``join'' operation: if $R'_1, R'_2, \dots$ and $S'_1,
S'_2, \dots$ are both growth sequences, then both are contained in
the growth sequence $\left< R'_1, S'_1\right>, \left<R'_2,
S'_2\right>, \dots,$ where for two rectangles $R$ and $S$,
$\left<R,
S\right>$ denotes the {\it span} of $R$ and $S$ (the smallest
rectangle that contains both $R$ and $S$).

\begin{definition}
A {\it good configuration} is an initial state on $\Z^2$ such that
any maximal rectangle growth sequence is infinite.
\end{definition}
\noindent This concept is useful for proving upper bounds for
growth, as the next lemma shows that good configurations are a
characterizing property of indefinite growth.
\begin{lemma}
\label{L:growthsequence} If $\calC$ has indefinite growth, then
$\calC$ is a good configuration.
\end{lemma}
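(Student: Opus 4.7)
The plan is to prove the contrapositive: if $\calC$ admits a finite maximal growth sequence $\calS = \{R'_1, \ldots, R'_m\}$, then $\calC$ does not have indefinite growth.

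First I would extract structural consequences of maximality. The possible extensions $R'_{m+1}$ are the $2^4 - 1 = 15$ rectangles obtained by extending $R'_m = [a_1, a_2] \times [b_1, b_2]$ on some subset of its four sides by one unit (keeping $R'_{m+1} \setminus R'_m \subseteq \Shell(R'_m)$), and each must contain a $k$-gap by maximality. Considering the four single-sided extensions (e.g., the right extension $[a_1, a_2 + 1] \times [b_1, b_2]$), and using that $R'_m$ already has no $k$-gaps, the new $k$-gap must involve the new row or column. This forces both the new shell row/column and the adjacent $k - 1$ outermost rows/columns of $R'_m$ to be entirely empty in $\calC$; analogous conclusions hold on all four sides. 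Since the origin lies in $R'_m$ and is initially active (hence non-empty), it cannot lie in any of these empty barriers, so $R'_m$ extends at least $k - 1$ in every direction from the origin, and $R'_m$ has a non-trivial ``core'' $C := [a_1 + k - 1, a_2 - k + 1] \times [b_1 + k - 1, b_2 - k + 1]$ that contains $0$ and is disjoint from the barriers.

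Next I would show by induction on time $t$ that the active region is contained in $C$. The base case holds since only the origin is initially active and $0 \in C$. For the inductive step, consider any candidate cell $c \notin C$: if $c$ is occupied in $\calC$, then $c$ cannot lie in the barriers or shell (both are empty in $\calC$), so $c$ lies in the ``far'' region beyond $R'_m \cup \Shell(R'_m)$; but the $\ell^1$-distance from any such $c$ to $C$ exceeds $k$ due to the combined width $k$ of the barrier plus the shell thickness, so the occupied-cell rule cannot apply. If instead $c$ is empty in $\calC$, activation requires $k$ active cells in $N_k(c)$, all lying in $C$ by the inductive hypothesis; a direct count shows $|N_k(c) \cap C| \leq k - 1$ for any $c \notin C$, because the cross-shaped $N_k(c)$ can reach $C$ only through one side, and the width-$k$ barrier blocks the $k$-th active cell. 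Hence no cell outside $C$ activates, the induction closes, and the active region remains bounded, contradicting indefinite growth.

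Small $R'_m$ are handled at the outset: for $k \geq 2$ the rectangle $R'_1 = \{0\}$ always admits the extension $R'_2 = [0,1] \times [0,0]$ (column $0$ contains the active origin, so no $2$-gap), so maximality forces $R'_m$ to have dimensions at least $2k - 1$ in each direction and $C$ is non-empty; the $k = 1$ cases (modified and Frob\"ose) use parallel, simpler arguments adapted to those growth rules, since $N_k$ reduces to immediate neighbors. The main technical obstacle is the inductive counting in the second paragraph --- verifying that the width-$k$ barriers derived from maximality exactly match the $k$-percolation neighborhood structure, blocking both the occupied-cell rule (via $\ell^1$-distance $>k$ to the core) and the empty-cell rule (via $|N_k(c) \cap C| \leq k - 1$). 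This exact cancellation between barrier width and neighborhood reach is what makes the proof succeed uniformly for all $k$.
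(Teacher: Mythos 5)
Your proof is correct and follows the same barrier argument as the paper: maximality of the finite growth sequence forces a width-$k$ ring of empty rows and columns (the outer $k-1$ rows/columns of $R'_m$ together with $\Shell(R'_m)$) around a core $C$ containing the origin, and this ring blocks any further growth. Where the paper compresses the final step into a single sentence (``these rows and columns\ldots will remain so even if $R'_{n-k+1}$ becomes completely active''), you make it explicit with the induction on time and the two counts $|N_k(c)\cap C|\leq k-1$ and $\ell^1$-distance $>k$; this is exactly the verification the paper leaves implicit, so the approaches coincide.
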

\begin{proof}
We argue by contradiction and suppose that $\calC$ is not a good
configuration, and thus has a finite maximal growth sequence
$\calS(\calC)= \; \left\{0=R'_1\subsetneq \dots \subsetneq R'_n
\right\}$.
  By definition, such a sequence ends
with a rectangle $R'_n$ that is empty in its $k-1$ outermost rows and columns.  Furthermore,
$\Shell(R'_n)$ must also also be empty in $\calC$.  Thus
$U := R'_n \cup \Shell(R'_n)$ is empty in its $k$ outermost rows and columns, so that
$R'_{n-k+1}$ is the last rectangle in $\calS$ that has occupied
sites on its boundary.

According to the growth rules, there cannot be any active squares outside of $\calS$ until there is first an
active square somewhere in the $k$ outermost rows and columns of $U$.  However, since these rows and columns are completely empty, they will remain so even if $R'_{n-k+1}$ becomes completely active.
This contradicts the assumption that $\calC$ has indefinite growth,
thus completing the proof.
\end{proof}

Because of Lemma \ref{L:growthsequence}, we can use good
configurations as an upper bound for indefinite growth, and we spend
much of the remainder of this section showing that the arguments in
\cite{GH09} can still be applied to these configurations.  We
further classify good configurations into two types of behavior
that, much like the events $\Jk$ and $\Dk$ in Section \ref{S:Lower},
(roughly) correspond to whether the growth is ``skew'' or
``diagonal''.
\begin{definition}
A growth sequence \emph{escapes} if there is an $R'_i$ with
dimensions $(a', b')$ such that $a'\in [B, B+1]$ and $b'\leq A$, or
such that $a'\leq A$ and $b'\in [B, B+1]$.
\end{definition}

We can now generalize Gravner and Holroyd's concept of a ``good
sequence'' by considering appropriate subsequences of a growth
sequence.
\begin{definition}
\label{D:goodseq} A \emph{good sequence} is a sequence of rectangles
$0\in R_1\subsetneq\ldots\subsetneq R_{n+1}$ that satisfies the
following conditions on the dimensions $\dim R_i=(a_i, b_i)$:
\begin{enumerate}
\item
$\min\{a_1, b_1\}\in [A, A+1]$
\item
$a_n+b_n\leq B$
\item
$a_{n+1}+b_{n+1}>B$
\item
For $i=1, \ldots, n$ we have $s_i\geq a_i D$ or $t_i\geq b_i D$,
where $s_i:=a_{i+1}-a_i$ and $t_i:=b_{i+1}-b_i$ are the successive
dimension differences.
\item
For $i=1, \ldots, n$ we have $s_i<a_i D+2$ and $t_i<b_i D+2$.
\end{enumerate}
\end{definition}

\noindent For a rectangle $R$ define next the event
\begin{equation*}
G(R):=\{R \text{ has no $k$-gaps in columns or rows}\}.
\end{equation*}
Furthermore for two rectangles $R\subseteq R'$, we define the
subrectangles $S_1, \ldots, S_8$ (some of which may be empty) as in
Figure \ref{F:RS}.
\begin{figure}[here]
\centering \scalebox{0.6}{\input{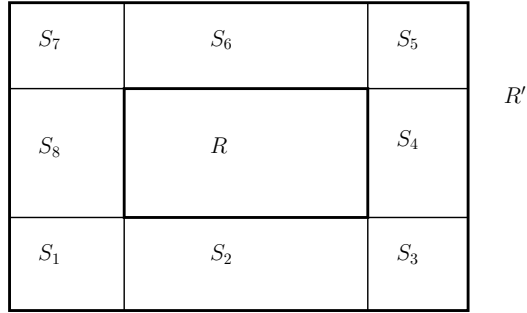}} \caption{The
rectangular regions defined by $R$ and $R'$} \label{F:RS}
\end{figure}

\begin{definition}
Let $D(R, R')$ denote the event that each of the two rectangles
$S_1\cup S_8\cup S_7$ and $S_3\cup S_4\cup S_5$ have no $k$-gaps
along the columns, and that each of the two rectangles $S_1\cup
S_2\cup S_3$ and $S_7\cup S_6\cup S_5$ has no $k$-gaps along the
rows.
\end{definition}
\begin{remark}
One easily sees that $D(R,R')$  is necessary for the growth to
proceed from $R$ to $R'$.
\end{remark}

\begin{lemma}
\label{L:escape} A good configuration $\calC$ either has a good
sequence $R_1   \subsetneq \ldots  \subsetneq R_{n+1}$ such that
$G(R_1)$ and $\displaystyle\bigcap_{i=1}^n D(R_i, R_{i+1})$ occur,
or $S(\calC)$ escapes.
\end{lemma}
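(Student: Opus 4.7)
The plan is to extract a good sequence as a suitable subsequence of a maximal rectangle growth sequence $\calS(\calC) = \{R'_1, R'_2, \dots\}$, which is infinite since $\calC$ is good. If $\calS(\calC)$ escapes, the lemma is immediate, so I assume throughout that no escape occurs and proceed to construct the good sequence $R_1 \subsetneq \cdots \subsetneq R_{n+1}$ from $\calS(\calC)$.

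The starting point $R_1$ is located using the observation that $\min(\dim R'_j)$ is non-decreasing in $j$ (because $R'_j \subseteq R'_{j+1}$) and increases by at most $2$ per step (because $R'_{j+1}\setminus R'_j \subseteq \Shell(R'_j)$ adds only a single row or column on each side). Starting from the value $1$ at $R'_1 = \{0\}$, the quantity $\min(\dim R'_j)$ must therefore land in $[A, A+1]$ at some first index $i_1$, and I set $R_1 := R'_{i_1}$. The no-escape hypothesis forces the other dimension to satisfy $\max(\dim R_1) < B$, for otherwise the same step-size-$2$ argument would produce an earlier index at which $\max \in [B, B+1]$ while $\min < A$, violating no-escape. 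In the degenerate case $a_1 + b_1 > B$ I take $n = 0$ and set $R_{n+1} = R_1$, rendering conditions (2), (4), (5) vacuous.

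Otherwise I iteratively build $R_{i+1}$ from $R_i$ (so long as $a_i + b_i \leq B$) by marching along $\calS(\calC)$ past $R_i$ and stopping at the first rectangle whose dimensions $(a,b)$ satisfy $a - a_i \geq a_i D$ or $b - b_i \geq b_i D$, which immediately gives condition (4). For condition (5), the rectangle immediately preceding $R_{i+1}$ in $\calS(\calC)$ has increments $s' < a_i D$ and $t' < b_i D$ (otherwise the stopping rule would have fired earlier), and since each step of $\calS(\calC)$ grows each dimension by at most $2$, this yields $s_i < a_i D + 2$ and $t_i < b_i D + 2$. Because each iteration increases $a_i + b_i$ by at least $AD$, which diverges by the parameter asymptotics in Definition \ref{D:param}, the procedure terminates at a unique $n$ with $a_n + b_n \leq B < a_{n+1} + b_{n+1}$, giving conditions (2) and (3).

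Finally I verify the event conditions. $G(R_1)$ holds immediately because $R_1 \in \calS(\calC)$ has no $k$-gaps by definition of a growth sequence. The step I expect to be most delicate is $D(R_i, R_{i+1})$: this requires that each of the four strips between $R_i$ and $R_{i+1}$ has no $k$ consecutive empty columns (for the side strips) or rows (for the top and bottom strips). The key observation is that every column of a side strip is in fact a full column of $R_{i+1}$ that lies entirely outside $R_i$, so it is empty within the strip if and only if it is empty within $R_{i+1}$; consequently, $k$ consecutive empty strip-columns would produce $k$ consecutive empty columns in $R_{i+1}$, contradicting the no-$k$-gap condition defining $\calS(\calC)$. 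The same argument in the orthogonal direction handles the top and bottom strips, so $G(R_{i+1})$ implies $D(R_i, R_{i+1})$, completing the construction.
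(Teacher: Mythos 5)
Your proof is correct and takes exactly the route that the paper sketches in two sentences (extract a subsequence of $\calS(\calC)$ determined by dimensions and verify the conditions of Definition \ref{D:goodseq}); yours is just a careful expansion of the paper's ``easily seen to be satisfied.'' Two small points deserve flagging. First, your statement that $AD$ ``diverges by the parameter asymptotics'' is incorrect: from $A \sim s^{-1/2}$ and $D \sim s^{1/2}(\log s^{-1})^{-1/2}$ one gets $AD \to 0$, and Definition \ref{D:param} only guarantees $BD$ bounded below; the termination you need, however, only requires $AD > 0$ for a fixed $s$, since $a_i + b_i$ then strictly increases by a fixed positive amount toward the finite bound $B$. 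Second, the existence of the index $i_1$ with $\min(\dim R'_{i_1}) \in [A, A+1]$ is not automatic from monotonicity plus step-size $2$ alone: if $\min$ were to stay forever below $A$, the sum of dimensions $\to \infty$ would force $\max$ through $[B, B+1]$ and hence an escape, so the no-escape hypothesis is what rules this out. You invoke no-escape for the companion claim $\max(\dim R_1) < B$ but not for the existence of $R_1$ itself; adding that one sentence would close the only real gap.
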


\begin{proof}
If $S(\calC)$ does not escape, then $R_1  \subsetneq \ldots
\subsetneq R_{n+1}$ can be taken as a subsequence of $S(\calC)$ that is
determined solely by the rectangle dimensions.  All of the
conditions in Definition \ref{D:goodseq} are easily seen to be
satisfied.
\end{proof}

\subsection{Probability estimates}
\label{S:Upperestimates} We now approximate the probability of
various events involving good sequences and follow the technical
framework used in \cite{GH08, GH09}, again using our new combinatorial
definitions to unify the results for all $k$.  Most of the proofs
are straightforward generalizations of Gravner and Holroyd's, using
our general estimates from Section \ref{S:Logprobability}, but we include
all technical steps in order to be precise with certain ``$k$-shifts'' that occur.

We begin with a lemma that was proven in \cite{HLR}, which is also
the special case where all probabilities are equal in the upper bound of our
Proposition \ref{P:kgapsupper}.
\begin{lemma} \label{L:kgap}
If $R = R(a,b)$ is a rectangle, then
\begin{equation*}
 \Prob(R \text{ has no } k \text{-gaps in its columns or rows})
\leq
\begin{cases}
\exp \big(-(a-(k-1)) g_k(bs)\big) & \qquad \text{ if }a\leq b,\\
\exp \big(-(b-(k-1)) g_k(as)\big) & \qquad \text{ if }a\geq b.
\end{cases}
\end{equation*}
\end{lemma}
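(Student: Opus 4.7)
The plan is to derive the bound as a direct corollary of Proposition \ref{P:kgapsupper} applied to a column (or row) decomposition of $R$, using that the event in question is contained in the single-direction ``no $k$-gaps in columns'' event (and likewise for rows).

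First I would note that the stated event is an \emph{intersection}: $R$ has neither $k$ consecutive empty columns nor $k$ consecutive empty rows. Hence its probability is bounded above by the probability of the column-only event alone, and separately by the probability of the row-only event alone. It therefore suffices to prove, for the case $a \leq b$, that the column-only event satisfies
$$
\Prob\bigl(R \text{ has no } k\text{-gaps in its columns}\bigr) \leq \exp\bigl(-(a-(k-1))\, g_k(bs)\bigr),
$$
and the case $a \geq b$ follows by the obvious symmetric argument with rows.

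Fix $a \leq b$. Index the $a$ columns of $R$ from $1$ to $a$, and for each $1 \leq i \leq a$ let $A_i$ be the event that the $i$-th column contains at least one occupied cell. Since each column consists of $b$ independent cells (each empty with probability $q = e^{-s}$), we have
$$
\Prob(A_i) = 1 - q^{b} = 1 - e^{-bs} =: u,
$$
and because different columns are composed of disjoint cells, the events $A_1,\ldots,A_a$ are mutually independent. By construction, the event that $R$ has no $k$-gaps in its columns is precisely the event that the sequence $(A_i)_{i=1}^{a}$ has no $k$-gap.

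Finally, since the probabilities $u_i = u$ are constant in $i$, the sequence is trivially monotonically decreasing, so Proposition \ref{P:kgapsupper} applies and yields
$$
\Prob\bigl(\{A_i\}_{i=1}^{a} \text{ has no } k\text{-gaps}\bigr) \leq \prod_{i=1}^{a-k+1} f_k(1-u) = f_k\bigl(e^{-bs}\bigr)^{a-(k-1)} = \exp\bigl(-(a-(k-1))\, g_k(bs)\bigr),
$$
where the last equality is the definition $g_k(z) = -\log f_k(e^{-z})$. Combining with the symmetric row-based argument for $a \geq b$ completes the proof. There is no genuine obstacle here, as the lemma is essentially a packaging of the constant-probability specialization of Proposition \ref{P:kgapsupper}; the only bookkeeping subtleties are the correct identification of $n = a$ (resp.\ $n = b$) and the use of column (resp.\ row) independence, which follows from disjointness of cells.
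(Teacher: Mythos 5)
Your proof is correct and takes exactly the route the paper intends: as the paper notes in the sentence preceding the lemma, this is the constant-probability specialization of Proposition \ref{P:kgapsupper}, applied to the independent column (or row) nonemptiness events, which is precisely what you carry out. One helpful observation for reconciling later applications (e.g.\ Lemmas \ref{L:escapelemma} and \ref{L:entryLemma}): since the stated event is contained in both the column-only and the row-only no-$k$-gap events, \emph{both} displayed upper bounds actually hold for every rectangle regardless of whether $a\leq b$ or $a\geq b$; the case split in the lemma statement merely selects one form to report, and the paper in places invokes the form not singled out by the case condition.
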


Next we prove $k$-analogs of several results in \cite{GH09},
beginning with the probability that the growth sequence
$\calS(\calC)$ escapes.
\begin{lemma}\label{L:escapelemma}
If $s$ is sufficiently small, then there exists
 a constant $c>0$
such that
\[
\Prob\Big(S(\mathcal{C})\text{ escapes}\Big)\leq \exp\left(-cB\log
s^{-1}\right).
\]
\end{lemma}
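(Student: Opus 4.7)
The plan is to reduce the escape event to the existence of a ``long thin'' rectangle containing the origin with no $k$-gaps in its rows or columns, to bound the probability for a single such rectangle using Proposition~\ref{P:kgapsupper}, and to finish with a polynomial union bound. Concretely, if $\calS(\calC)$ escapes then, by the defining properties of a rectangle growth sequence, there is some $R'_i\in\calS(\calC)$ which contains the origin, has dimensions $(a',b')$ satisfying either $a'\in[B,B+1]$ and $b'\leq A$ or the symmetric condition, and has no $k$ consecutive empty columns (nor rows). It therefore suffices to bound the probability that some rectangle of the indicated shape, containing the origin, has no $k$-gap along its columns.

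For a fixed such rectangle $R$, say with $a'\in[B,B+1]$ and $b'\leq A$, I would view the $a'$ columns of $R$ as independent events ``column is non-empty,'' each with success probability $1-q^{b'}$, and apply Proposition~\ref{P:kgapsupper} to the resulting constant probability sequence. This yields
\[
\Prob\bigl(R\text{ has no }k\text{-gap in columns}\bigr) \;\leq\; f_k(q^{b'})^{a'-k+1} \;\leq\; f_k(q^A)^{B-k+1} \;=\; \exp\!\bigl(-(B-k+1)\,g_k(As)\bigr),
\]
where I have used $b'\leq A$, $a'\geq B$, the monotonicity of $f_k$, and the definition of $g_k$. The important point is that Proposition~\ref{P:kgapsupper} is applied along the \emph{long} side of $R$, so that the count is $a'\sim B$ and the argument of $g_k$ is $b's\leq As$; this produces a significantly stronger bound than the one obtained by quoting Lemma~\ref{L:kgap} in the naive direction.

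To put this exponent into the form given in the statement I would invoke Lemma~\ref{L:gkproperties}(iv), which gives $g_k(As)\sim\frac{1}{k}\log(As)^{-1}$ as $s\to 0$. Combined with $\log(As)\sim\log s$ from Definition~\ref{D:param}, this yields $g_k(As)\geq c_0\log s^{-1}$ for some $c_0>0$ and all sufficiently small $s$. A union bound over the $O(A^2 B)$ candidate rectangles of the relevant shape containing the origin (times a factor of two for the two escape orientations) contributes only a polynomial-in-$s^{-1}$ factor, which is absorbed by the exponential, yielding $\Prob(\calS(\calC)\text{ escapes})\leq\exp(-cB\log s^{-1})$ for a suitable $c>0$. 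The main obstacle is essentially bookkeeping: applying Proposition~\ref{P:kgapsupper} in the correct orientation and cleanly chaining the asymptotic inequalities from Lemma~\ref{L:gkproperties} and Definition~\ref{D:param}; no new combinatorial ideas are required beyond the rectangle growth sequence framework already introduced.
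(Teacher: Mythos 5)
Your proof is correct and essentially identical to the paper's: apply the no-$k$-gap bound (Proposition~\ref{P:kgapsupper}) along the long dimension of an escaping rectangle to obtain $\exp\bigl(-(B-(k-1))g_k(As)\bigr)$, then use Lemma~\ref{L:gkproperties}\ref{L:gk:z0} together with $\log(As)^{-1}\sim\log s^{-1}$ from Definition~\ref{D:param}, and absorb the $O(A^2B)$ union-bound factor into the exponential. Your remark that ``quoting Lemma~\ref{L:kgap} in the naive direction'' would give a weaker bound in fact flags a typo in that lemma's case structure (the two cases appear to be swapped): the paper's own proof of Lemma~\ref{L:escapelemma} applies the formula $\exp\bigl(-(a-(k-1))g_k(bs)\bigr)$ to a rectangle with $a'>b'$, which is precisely the stronger of the two available bounds and coincides with what you derive directly from Proposition~\ref{P:kgapsupper}.
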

\begin{proof}
The number of possible rectangles $R$ such that the growth sequence
escapes is at most $4A^2(B+1)$. For such an $R$ Lemma \ref{L:kgap}
implies that as $s\to 0$,
\[
\Prob\left(G(R)\right)\leq \exp\Big(-(B-(k-1))g_k(As)\Big)\leq
\exp\Big(-c\cdot B g_k(As)\Big)
\]
for some constant $c$; we obtain this uniform bound by using the
fact that $g_k$ is decreasing (Lemma \ref{L:gkproperties}
\ref{L:gk:dec}). By the assumptions in Definition \ref{D:param},
$As\to 0$ for $s\to 0$, so we may use Lemma \ref{L:gkproperties}
\ref{L:gk:z0}, giving that,
\[
\Prob\Big(S(\mathcal{C})\text{ escapes}\Big)\leq 4 A^2 (B+1) \exp
\left(-\frac{Bc}{k}\log(As)^{-1}\right)\leq \exp \left(-c B\log
s^{-1}\right).
\]
The final bound follows since we can absorb the leading factor into
the exponential error term, and also from the asymptotic
\[
\log(As)^{-1}\sim\log s^{-1}.
\]
\end{proof}

Next come several bounds related to good sequences, beginning with a
uniform bound for the initial rectangle $R_1$.
\begin{lemma}\label{L:entryLemma}
Let $R_1, \ldots, R_{n+1}$ be a good sequence of rectangles and let
$a_0=b_0=A$, $s_0=a_1-a_0$, $t_0=b_1-b_0$. Then we have for some
constant $c>0$
\[
\Prob\left(G(R_1)\right)\leq
\exp\Big(-s_0g_k(b_0s)-t_0g_k(a_0s)+cA^{-1}B\Big).
\]
\end{lemma}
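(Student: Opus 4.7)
The plan is to bound $\Prob(G(R_1))$ via just the ``no $k$-gap in the rows'' subevent and then use a first-order smoothness estimate for $g_k$ near $As$ to bring the bound into the target form. By the symmetry of the claim, I may assume $a_1 \leq b_1$; then condition (i) of Definition \ref{D:goodseq} forces $s_0 = a_1 - A \in [0,1]$, while $t_0 = b_1 - A$ can be as large as $B - A$. In particular, $b_1 = A + t_0 \leq B$ by the good-sequence condition $a_n + b_n \leq B$ together with monotonicity of $\{R_i\}$.

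Since $G(R_1)$ implies no $k$-gap in the $b_1$ rows of $R_1$, and the ``row is nonempty'' events are independent with common probability $1 - q^{a_1}$, Proposition \ref{P:kgapsupper} applied directly in the row direction yields
\[
\Prob(G(R_1)) \leq f_k(q^{a_1})^{b_1 - k + 1} = \exp\bigl(-(b_1 - k + 1) g_k(a_1 s)\bigr).
\]
Note this is a different valid upper bound than the one recorded in Lemma \ref{L:kgap} for the case $a_1 \leq b_1$; both come from Proposition \ref{P:kgapsupper}, but the row version is the one we need.

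Next, since $s_0 \in [0,1]$ and $As \to 0$ by Definition \ref{D:param}, the mean value theorem together with the asymptotic $|g_k'(z)| \leq C/z$ from Lemma \ref{L:gkproperties} \ref{L:gk:g'z0} gives $|g_k(a_1 s) - g_k(As)| \leq C s_0 s / (As) \leq C/A$. Substituting and rearranging,
\[
(b_1 - k + 1) g_k(a_1 s) \geq (A + t_0 - k + 1)\bigl(g_k(As) - C/A\bigr) = (s_0 + t_0) g_k(As) + (A - k + 1 - s_0) g_k(As) - \frac{C(A + t_0 - k + 1)}{A}.
\]
The middle ``bonus'' term is nonnegative for $A \geq k$, and since $A + t_0 \leq B$, the error is bounded by $cA^{-1}B$ for a suitable constant $c$. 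Exponentiating produces the claimed inequality.

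The main technical point is recognizing that the correct bound on $\Prob(G(R_1))$ here is the ``row'' version $\exp(-(b_1 - k + 1) g_k(a_1 s))$ rather than the ``column'' version $\exp(-(a_1 - k + 1) g_k(b_1 s))$ recorded in Lemma \ref{L:kgap} for $a_1 \leq b_1$; the latter is too weak when $t_0$ is large, since it scales only like $A \cdot g_k((A + t_0)s)$ instead of roughly $(A + t_0) \cdot g_k(As)$. Once the correct direction is chosen, the rest is a routine Taylor-type comparison using the smoothness bounds on $g_k$.
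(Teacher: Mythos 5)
Your proof is correct and follows essentially the same route as the paper's, which takes the opposite WLOG ($a_1 \geq b_1$, hence $t_0 \leq 1$), uses the bound $\Prob(G(R_1)) \leq \exp\big(-(a_1-(k-1))\,g_k(b_1 s)\big)$, and then compares $g_k(b_1 s)$ with $g_k(As)$ via the convexity and derivative estimates of Lemma~\ref{L:gkproperties}, just as your mean-value argument does. Your side-observation about Lemma~\ref{L:kgap} is also apt: as displayed, its two cases appear to be swapped (the exponent should carry the \emph{larger} dimension and $g_k$ of the \emph{smaller} one), and both the paper's proof of the present lemma and of Lemma~\ref{L:escapelemma} implicitly use the corrected orientation, exactly as you do by going back to Proposition~\ref{P:kgapsupper}.
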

\begin{proof}
We assume without loss of generality that $a_1\geq b_1$, so
$b_1\in[A, A+1]$. By Lemma \ref{L:kgap} we have
\[
\Prob\left(G(R_1)\right)\leq
\exp\Big(-\left(a_1-(k-1)\right)g_k(b_1s)\Big).
\]
Thus, noting that $t_0\leq 1$,
\begin{equation}\label{PGR}
\begin{split}
\frac{\Prob\left(G(R_1)\right)}{\exp\Big(-s_0g_k(b_0s)-t_0g_k(a_0s)\Big)}
& \leq \exp\Big(-\left(a_1-(k-1)\right)g_k(b_1s)+s_0 g_k(As)+ t_0 g_k(As)\Big)\\
& \leq \exp\Big(s_0\left(g_k(As)-g_k(b_1s)\right)-(A-(k-1))g_k (b_1s)+g_k(As)\Big)\\
& \leq \exp\Big(s_0\left(g_k(As)-g_k(b_1s)\right)+g_k(As)\Big).
\end{split}
\end{equation}
 Since $g_k$ is convex and monotonically decreasing (Lemma
\ref{L:gkproperties}  \ref{L:gk:dec}),
\[
0\leq g_k(As)-g_k(b_1 s)\ll -s g'_k(As)\ll A^{-1},
\]
where for the last estimate we used Lemma \ref{L:gkproperties} (v).
Moreover, Lemma \ref{L:gkproperties} \ref{L:gk:z0} gives
\[
g_k(As)\ll\frac{1}{k}\log s^{-1}.
\]
Thus \eqref{PGR} can be estimated against
\[
\exp(cs_0A^{-1}+c\log s^{-1})\leq \exp \left( cBA^{-1}\right),
\]
where the final inequality follows from our assumption that $BA^{-1}
\gg \log{s^{-1}}.$
\end{proof}

\begin{lemma}\label{L:DR}
If $R\subseteq R'$ are two rectangles with dimensions $(a, b)$ and
$(a+\ell, b+m),$ respectively, then
\[
\Prob\left(D(R, R')\right)\leq \exp
\Big(-\left(m-2(k-1)\right)g_k(as)-\left(\ell-2(k-1)\right)g_k(bs)
+\ell ms \exp\big(k\left(g_k(as)+g_k(bs)\right)\big) \Big).
\]
\end{lemma}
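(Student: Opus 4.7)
My plan is to split $D(R,R')$ as the intersection of four strip events $E_L,E_R,E_B,E_T$ on the left, right, bottom, and top strips of $R'$ outside $R$; to apply Proposition~\ref{P:kgapsupper} to each; and then to convert the resulting arguments of $g_k$ from the outer dimensions $b+m$ and $a+\ell$ down to the inner dimensions $b$ and $a$ via convexity. Throughout I write $\ell_L+\ell_R=\ell$ and $m_B+m_T=m$ for the four one-sided extensions of $R'$ beyond $R$, so that for example the left strip $S_1\cup S_8\cup S_7$ has dimensions $\ell_L\times(b+m)$.

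First I would apply Proposition~\ref{P:kgapsupper} to the left strip, whose $\ell_L$ independent columns are each non-empty with the common probability $1-q^{b+m}$; this gives $\Prob(E_L)\leq \exp\bigl(-(\ell_L-(k-1))\,g_k((b+m)s)\bigr)$, and analogously for the other three strips (involving $\ell_R$, $m_B$, $m_T$, and the horizontal argument $g_k((a+\ell)s)$). By the convexity of $g_k$ (Lemma~\ref{L:gkproperties}\ref{L:gk:dec}), the tangent-line estimates $g_k((b+m)s)\geq g_k(bs)-ms\,|g_k'(bs)|$ and $g_k((a+\ell)s)\geq g_k(as)-\ell s\,|g_k'(as)|$ convert these four bounds into ones with the inner arguments $g_k(bs)$ and $g_k(as)$, with an aggregate additive error in the exponent of at most $\ell m s(|g_k'(as)|+|g_k'(bs)|)$.

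To recast this error in the form claimed in the lemma I would next verify the uniform bound $|g_k'(z)|\ll \exp(k g_k(z))$ for $z>0$: by Lemma~\ref{L:gkproperties}\ref{L:gk:zinfty}--\ref{L:gk:g'z0}, both sides are comparable to $z^{-1}$ as $z\to 0$ (since $g_k(z)\sim (1/k)\log z^{-1}$ and $|g_k'(z)|\sim 1/(kz)$) and decay like $e^{-kz}$ as $z\to\infty$, so their ratio is uniformly bounded. This immediately gives $|g_k'(as)|+|g_k'(bs)|\ll \exp(k(g_k(as)+g_k(bs)))$, and after absorbing the implicit constant we recover the stated error factor.

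The main technical obstacle is that the four strip events are not independent: they share the corner cells $S_1,S_3,S_5,S_7$, and because each event is increasing the FKG inequality gives only the wrong-direction bound $\Prob(\bigcap E_i)\geq \prod\Prob(E_i)$. I would circumvent this by conditioning on the state of the corner cells; conditionally on them, the four events depend only on the pairwise disjoint narrow strips $S_8,S_4,S_2,S_6$ and are therefore independent, so the product of strip bounds becomes applicable. The expected contribution from the at most $\ell m$ corner cells---each occupied with probability at most $s$---is bounded by $\ell m s$ and gets absorbed into the error factor $\ell m s\exp(k(g_k(as)+g_k(bs)))$, paralleling Gravner and Holroyd's combinatorial accounting for the $k=1,2$ cases in~\cite{GH09}.
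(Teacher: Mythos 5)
Your overall strategy — decompose $D(R,R')$ into column and row strip events, apply Proposition~\ref{P:kgapsupper}, and handle the dependence at the corners by conditioning — points in the right direction, and the paper's proof does indeed condition on the occupied corner cells. However, the two halves of your plan do not fit together, and the computation that actually produces the error term $\ell m s\exp(k(g_k(as)+g_k(bs)))$ is missing.

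First, paragraphs two and three are working on a different (and ultimately unusable) version of the argument than paragraph four. You apply Proposition~\ref{P:kgapsupper} to the \emph{full} left strip $S_1\cup S_8\cup S_7$ of height $b+m$, obtaining $g_k((b+m)s)$, and then plan to downgrade this to $g_k(bs)$ by convexity. But once you condition on the corner cells, the left column event depends on $S_8$ alone, which already has height $b$; the conditioning approach produces $g_k(bs)$ directly and makes the convexity step and the estimate $|g_k'(z)|\ll e^{kg_k(z)}$ unnecessary. Conversely, the bound $\exp(-(\ell_L-(k-1))g_k((b+m)s))$ is a bound on the \emph{unconditional} probability of the left strip event, which is not what you multiply once you have conditioned; so those two paragraphs never contribute to the final argument.

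Second, and more importantly, the key quantitative step is glossed over. Conditionally on the corner configuration, the left column event is \emph{not} bounded by $\exp(-(\ell_L-(k-1))g_k(bs))$: each occupied corner cell automatically makes a column non-empty and thereby splits $S_8$ (and $S_4$) into several sub-strips, and each sub-strip incurs its own $-(k-1)$ loss when you apply Lemma~\ref{L:kgap}. With $j$ occupied corners the combined column bound degrades to $\exp(-(\ell-kj-2(k-1))g_k(bs))$, and likewise for rows; the probability of exactly $j$ occupied corners is $\binom{\ell m}{j}(1-q)^j q^{\ell m-j}$. Summing over $j$ and using $1-q\leq s$, $q\leq 1$, and $1+x\leq e^x$ yields
\[
\Big(1+s\exp\big(k(g_k(as)+g_k(bs))\big)\Big)^{\ell m}\leq \exp\Big(\ell m s\exp\big(k(g_k(as)+g_k(bs))\big)\Big),
\]
which is precisely the error term in the lemma. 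Your claim that the corner contribution is ``bounded by $\ell m s$ and gets absorbed into the error factor'' skips this entirely: the factor $\exp(k(g_k(as)+g_k(bs)))$ is not an incidental rewriting of $|g_k'|$ but rather the exact per-corner degradation of the strip bounds, and without the binomial/moment-generating-function computation the proof does not close.
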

\begin{proof}
We use the same notation as in Figure 2 and   split the event
according to the total number of occupied sites in the corners
$S_1\cup S_3\cup S_5 \cup S_7$. The probability that exactly $j$
(out of a possible number of $\ell m$) such corner sites are
nonempty is
\begin{equation*}
\binom{\ell m}{j} (1-q)^j q^{\ell m-j}.
\end{equation*}
The presence of these occupied corner sites divides $S_4$ and $S_8$ into
column strips $\calC_1, \dots, \calC_\alpha$ for some $\alpha \leq j
+ 2$, where the strip $\calC_i$ has corresponding width $c_i$. Using
Lemma \ref{L:kgap} we obtain a bound for the event that $S_4$ and
$S_8$ have no $k$-gaps; namely
\begin{align*}
\Prob & \left(S_4 \text{ and } S_8 \text{ have no } k \text{
gaps}\right)
 \leq  \exp\Big(-\left(c_1-(k-1)\right)g_k(bs)-\ldots-\left(c_\alpha-(k-1)\right)g_k(bs)\Big) \\
& \qquad \qquad = \exp\Big(-\left(c_1+\ldots+c_\alpha-\alpha(k-1)\right)g_k(bs)\Big)\\
&  \qquad \qquad \leq
\exp\Big(-\left(\ell-j-(j+2)(k-1)\right)g_k(bs)\Big)=
\exp\Big(-\left(\ell-kj-2(k-1)\right)g_k(bs)\Big).
\end{align*}
A similar argument applies to the rows in $S_2$ and $S_6$, and the
column and row events are independent. Thus
\begin{equation*}
\Prob\left(D(R, R')\right)\leq \sum_{j=0}^{\ell m} \binom{\ell m}{j}
(1-q)^j q^{\ell m-j}
\exp\Big(-\left(\ell-kj-2(k-1)\right)g_k(bs)-\left(m-kj-2(k-1)\right)g_k(as)\Big).
\end{equation*}
The approximations $q\leq 1$ and $1-q\leq s$ then imply that
\begin{align*}
\Prob & \left(D(R, R')\right)
\leq \exp\Big(-\left(\ell-2(k-1)\right)g_k(bs)-\left(m-2(k-1)\right)g_k(as)\Big) \\
& \qquad \qquad \qquad \qquad \qquad \times \sum_{j=0}^{\ell m}
\binom{\ell m}{j}
s^j \exp\Big(kj\left(g_k(bs)+g_k(as)\right)\Big)\\
& =
\exp\Big(-\left(\ell-2(k-1)\right)g_k(bs)-\left(m-2(k-1)\right)g_k(as)\Big)
\Big(1+s \exp\left(k\left(g_k(bs)+g_k(as)\right)\right)\Big)^{\ell m}\\
& \leq
\exp\Big(-\left(\ell-2(k-1)\right)g_k(bs)-\left(m-2(k-1)\right)g_k(as)+s\ell
m \exp\left(k\left(g_k(as)+g_k(bs)\right)\right)\Big),
\end{align*}
where for the last inequality we used the crude estimate $1+x\leq
e^x$.
\end{proof}
Several of the prior papers in this subject have used a general variational result for probabilities involving
convex functions \cite{GH09, Hol03}, and we bound the resulting error terms for $g_k$
and our particular parameters.
\begin{lemma}[Lemma 7 of \cite{GH09}]\label{L:convexLemma}
Suppose that $A$ and $B$ are positive integers satisfying $2A < B$,
and that $(a_i, b_i)_{i=1, \ldots n+1}$ satisfy $a_0=b_0=A$ and
$s_i:=a_{i+1}-a_i\geq 0$ and $t_i:=b_{i+1}-b_i\geq 0$, as well as
the first three properties of Definition \ref{D:goodseq}.  For any
$s>0$ and any positive, smooth, convex, decreasing function $g:(0,
\infty)\to (0, \infty)$,
\begin{equation*}
\sum_{i=1}^n\Big(s_i g(b_i s)+t_i g(a_i s)\Big) \geq
\frac{2}{s}\int_{As}^{Bs} g(t)dt-2Bg\left(\frac{Bs}{2}\right).
\end{equation*}
\end{lemma}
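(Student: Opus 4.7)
The plan is to interpret the left-hand sum as a discrete line integral along a monotone lattice path from near $(A,A)$ to the region $\{a+b>B\}$, reduce via a local swap argument to the ``diagonal'' path that stays close to $a=b$, and estimate the resulting Riemann-type sum against the claimed integral.

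First I would decompose each macro step $(s_i,t_i)$ into $t_i$ unit upward steps followed by $s_i$ unit rightward steps. Since $g$ is decreasing and $b_i\le b_{i+1}$, the per-step contribution becomes $t_i g(a_i s)+s_i g(b_{i+1}s)\le t_i g(a_i s)+s_i g(b_i s)$, so the resulting unit-step sum is a lower bound for the original. In this formulation, each unit rightward step at relative displacement $(r,u)$ from $(A,A)$ contributes $g((A+u)s)$ and each unit upward step contributes $g((A+r)s)$, making the total depend only on the underlying lattice path.

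Second, I would apply a local exchange argument. Swapping an adjacent UR pair at relative position $(r,u)$ into RU alters the unit-step sum by $\bigl[g((A+u)s)-g((A+u+1)s)\bigr]-\bigl[g((A+r)s)-g((A+r+1)s)\bigr]$, which by convexity of $g$ has the same sign as $r-u$, since $x\mapsto g(xs)-g((x+1)s)$ is monotonically decreasing. Iterating such swaps strictly decreases the sum unless the path already alternates R and U as evenly as possible, so the infimum over admissible paths is attained by the diagonal path that stops just after first crossing $a+b=B$, near $(B/2,B/2)$.

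Third, the sum along this diagonal path is a Riemann-type approximation to $2\int_A^{B/2}g(us)\,du=\tfrac{2}{s}\int_{As}^{Bs/2}g(t)\,dt$. Since $g$ is decreasing, $\tfrac{2}{s}\int_{Bs/2}^{Bs}g(t)\,dt\le \tfrac{2}{s}\cdot\tfrac{Bs}{2}\cdot g(Bs/2)=B\,g(Bs/2)$, so
\begin{equation*}
\tfrac{2}{s}\int_{As}^{Bs/2}g(t)\,dt\ge \tfrac{2}{s}\int_{As}^{Bs}g(t)\,dt-B\,g(Bs/2),
\end{equation*}
and the remaining $B\,g(Bs/2)$ in the correction $-2B\,g(Bs/2)$ absorbs the Riemann-sum-to-integral discretization error along with boundary adjustments (from $\min(a_1,b_1)$ possibly equalling $A+1$, and from the exclusion of step $0$ from the indexed sum). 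The main obstacle will be executing the unit-step swap reduction rigorously when macro steps are large, and carefully bookkeeping the boundary corrections that arise at both the starting point near $(A,A)$ and the stopping condition $a+b>B$, so that the $-2B\,g(Bs/2)$ term genuinely dominates all accumulated error.
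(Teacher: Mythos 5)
The paper itself does not prove this lemma: it cites Lemma~7 of \cite{GH09} and notes only that ``the parameters are slightly shifted, but the proof is analogous,'' so there is no in-text argument to compare against line by line. That said, the Gravner--Holroyd proof is, as far as I recall, a continuous variational argument: one writes $g(bs)\,da+g(as)\,db$ as an exact differential (in terms of a concave antiderivative of $g$) plus a sign-controlled remainder that can be integrated by parts in the ``diagonal'' coordinates $c=(a+b)/2$, $d=(a-b)/2$, and the convexity of $g$ enters only through the monotonicity of $g'$. Your discrete route --- unit-step decomposition, an adjacent-transposition argument, and then a Riemann-sum comparison --- is a genuinely different mechanism, and your swap computation and sign analysis via the decrease of $x\mapsto g(xs)-g((x+1)s)$ are correct. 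The swap step does, however, need a termination and extremality argument that the sketch glosses over (the ``balanced'' path depends on both endpoints, and the endpoint is itself being minimized over subject to the first-crossing rule), and the reduction of macro steps to unit steps has to be paired with a cut-off of the trailing unit steps beyond $a+b=B$, both of which are standard but not free.

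The more serious issue is the treatment of the initial rectangle. Condition \emph{(i)} of Definition \ref{D:goodseq} only pins down $\min\{a_1,b_1\}\in[A,A+1]$; there is no upper bound on $\max\{a_1,b_1\}$, which in the intended application can be comparable to $B$. Your plan repeatedly speaks of a lattice path ``from near $(A,A)$'' and of the correction coming ``from $\min(a_1,b_1)$ possibly equalling $A+1$,'' which silently assumes that $(a_1,b_1)\approx (A,A)$; in fact the sum $\sum_{i=1}^{n}$ omits the $i=0$ macro step entirely, and that missing contribution equals $(s_0+t_0)\,g(As)$, which can be of size $\sim B\,g(As)$ --- much larger than the available slack $2B\,g(Bs/2)$ when $g$ decays on $[As,Bs]$. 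Concretely, with $g(t)=e^{-t}$, $s=1$, $A=1$, $B=10$ and the admissible sequence $(a_0,b_0)=(1,1)$, $(a_1,b_1)=(4,1)$, $(a_2,b_2)=(5,5)$, $(a_3,b_3)=(6,6)$, the left-hand sum is $g(1)+4g(4)+2g(5)\approx 0.45$ while the right-hand side is $2\int_1^{10}e^{-t}\,dt-20\,e^{-5}\approx 0.60$, so the inequality as written fails; adding back the $i=0$ term $3\,g(1)\approx 1.10$ restores it. So either the sum should really begin at $i=0$ (as it effectively does when Corollary \ref{C:gsumbound} is applied in \eqref{E:Pindef2nd}), or $\max\{a_1,b_1\}$ must be bounded, and your proof cannot be completed without resolving that ambiguity, since the ``diagonal path from $(A,A)$'' picture you rely on only makes sense once the $i=0$ step is included in the count.
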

\begin{remark}
The parameters in this statement are slightly shifted from those used by Gravner and Holroyd,
but the proof is analogous.
\end{remark}
\begin{corollary}
\label{C:gsumbound} If $A$ and $B$ are as in Definition
\ref{D:param}, then for $s$ sufficiently small there exists a
constant $c>0$ such that
\begin{equation*}
\sum_{i=1}^n\Big(s_i g_k(b_i s)+t_i g_k(a_i s)\Big) \geq 2\lambda_k
s^{-1} - c\left( A \log{s^{-1}} - s^{-1} \exp(-kBs) -
B\exp(-kBs/2)\right).
\end{equation*}
\end{corollary}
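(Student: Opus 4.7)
The plan is to apply Lemma \ref{L:convexLemma} with $g := g_k$, which is legitimate since Lemma \ref{L:gkproperties} (i) guarantees that $g_k$ is positive, smooth, convex, and decreasing on $(0,\infty)$. This immediately produces
\begin{equation*}
\sum_{i=1}^n \bigl(s_i g_k(b_i s) + t_i g_k(a_i s)\bigr) \geq \frac{2}{s}\int_{As}^{Bs} g_k(t)\,dt - 2B g_k\bigl(\tfrac{Bs}{2}\bigr),
\end{equation*}
and everything then reduces to estimating the integral and the boundary term in terms of the parameters $A$, $B$, and $s$.

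For the integral, I would use the exact evaluation $\int_0^\infty g_k = \lambda_k$ from Lemma \ref{L:gkproperties} (ii) to rewrite
\begin{equation*}
\frac{2}{s}\int_{As}^{Bs} g_k(t)\,dt = 2\lambda_k s^{-1} - \frac{2}{s}\int_0^{As} g_k(t)\,dt - \frac{2}{s}\int_{Bs}^\infty g_k(t)\,dt,
\end{equation*}
and then bound the two tails using the asymptotics of Lemma \ref{L:gkproperties}. Near zero, part (iv) gives $g_k(z)\sim \tfrac{1}{k}\log z^{-1}$, and Definition \ref{D:param} ensures $As \to 0$ (since $\log(As)\sim \log s$) together with $\log(As)^{-1}\sim \log s^{-1}$, so $\int_0^{As} g_k(t)\,dt \ll As \log s^{-1}$, contributing an $O(A\log s^{-1})$ error after multiplication by $2/s$. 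At infinity, part (iii) gives $g_k(z)\sim e^{-kz}$, and the growth condition $B\geq s^{-1}\log s^{-1}$ forces $Bs \to \infty$, so $\int_{Bs}^\infty g_k(t)\,dt \ll e^{-kBs}$, contributing an $O(s^{-1}e^{-kBs})$ error. The boundary term $2B g_k(Bs/2)$ is handled by the same infinity asymptotic: $g_k(Bs/2)\ll e^{-kBs/2}$ yields a contribution of size $O(Be^{-kBs/2})$.

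Assembling these three error estimates and absorbing the implicit constants into a single $c>0$ gives the claimed bound. I do not anticipate any serious obstacle: the argument is essentially a routine combination of Lemma \ref{L:convexLemma} with the asymptotic information about $g_k$ collected in Lemma \ref{L:gkproperties}. The one point requiring some care is checking that the parameter conditions in Definition \ref{D:param} really do force $As\to 0$ and $Bs\to\infty$, so that the near-zero and at-infinity asymptotics of $g_k$ apply to the tails respectively, and that the implicit constants from the asymptotic relations $\sim$ can be converted to uniform $\ll$ bounds for $s$ below some threshold.
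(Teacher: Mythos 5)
Your proposal follows exactly the paper's proof: apply Lemma \ref{L:convexLemma} to $g_k$ (justified by Lemma \ref{L:gkproperties} (i)), decompose $\int_{As}^{Bs} g_k$ as $\int_0^\infty - \int_0^{As} - \int_{Bs}^\infty$, evaluate the first integral as $\lambda_k$ via Lemma \ref{L:gkproperties} (ii), and bound the two tails and the boundary term using the asymptotics in parts (iii) and (iv) together with the parameter conditions $As\to 0$ and $Bs\to\infty$ from Definition \ref{D:param}. The argument and the resulting error terms $A\log s^{-1}$, $s^{-1}e^{-kBs}$, and $Be^{-kBs/2}$ all match the paper's.
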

\begin{proof}
We first consider the integral in the bound of Lemma
\ref{L:convexLemma}, and write
\begin{equation*}
\int_{As}^{Bs}g_k(z)dz=\int_0^\infty g_k(z)dz-\int_0^{As}g_k(z)dz-
\int_{Bs}^\infty g_k(z)dz.
\end{equation*}
The first integral equals $\lambda_k$ by Lemma \ref{L:gkproperties}
\ref{L:gk:integral}.  Using Lemma \ref{L:gkproperties} \ref{L:gk:z0}
and our assumptions on $A$, the second integral can be estimated by
\begin{equation*}
c As\log (As)^{-1}\sim c As\log s^{-1}.
\end{equation*}
Lemma \ref{L:gkproperties} \ref{L:gk:zinfty} and assumptions on $B$ imply that
the third integral can be estimated by
\[
c\int_{Bs}^\infty e^{-kz}dz\sim c e^{-kBs}.
\]
Finally, for the second term of Lemma \ref{L:convexLemma}, Lemma
\ref{L:gkproperties} \ref{L:gk:zinfty} again gives that as $s\to 0$,
\[
B g_k\left(\frac{Bs}{2}\right)\ll B \exp\left(-\frac{k}{2}
Bs\right).
\]
\end{proof}

Next we consider bounds involving the dimensions of a good sequence.
\begin{lemma}\label{L:nestimate}
Let $n$ and $a_i, b_i\: (i=1, \ldots, n+1)$ be positive integers and
denote the successive differences by $s_i:= a_{i+1}-a_i\geq 0$ and
$t_i:=b_{i+1}-b_i\geq 0$ for $i=1, \ldots, n$. Further assume that
the dimensions satisfy all of the properties of a good sequence.
Then for $s\to 0$ the following bounds are satisfied:
\begin{enumerate}
\item
$\displaystyle n \ll D^{-1}\log s^{-1},$
\item
$\displaystyle \sum_{i=1}^n\frac{s_i t_i}{a_i b_i} \ll D\log
s^{-1}.$
\end{enumerate}
\end{lemma}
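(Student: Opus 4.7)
The plan is to deduce both bounds from the geometric growth constraints (iv) and (v) of Definition \ref{D:goodseq}, combined with the endpoint conditions (i)--(iii) and the parameter asymptotics of Definition \ref{D:param}.

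For part (i), I would observe that at every step $i$, condition (iv) forces at least one of $a_{i+1}/a_i$ or $b_{i+1}/b_i$ to be at least $1+D$, while the other is at least $1$ since $s_i, t_i \geq 0$. Hence the product grows multiplicatively, $a_{i+1} b_{i+1} \geq (1+D)\, a_i b_i$, and iterating yields $a_{n+1} b_{n+1} \geq A^2 (1+D)^n$ using $a_1, b_1 \geq A$ from condition (i). Combining with the terminal bound $a_n b_n \leq (a_n + b_n)^2/4 \leq B^2/4$ (from condition (ii) and AM--GM) gives $(1+D)^{n-1} \leq B^2/(4A^2)$; taking logarithms, using $\log(1+D) \sim D$ as $D \to 0$, together with $\log(A^{-1}B) \ll \log s^{-1}$ from Definition \ref{D:param}, produces the claimed bound on $n$.

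For part (ii), I would use condition (v) in the form $s_i/a_i < D + 2/a_i$ and $t_i/b_i < D + 2/b_i$. Since $a_i, b_i \geq A \to \infty$ and $D \to 0$, each ratio is uniformly small, so the elementary inequality $x \leq 2\log(1+x)$ (valid for $x \geq 0$ sufficiently small) yields the telescoping estimate
\[
\sum_{i=1}^n \frac{s_i}{a_i} \leq 2 \sum_{i=1}^n \log \frac{a_{i+1}}{a_i} = 2 \log \frac{a_{n+1}}{a_1} \ll \log(B/A) \ll \log s^{-1},
\]
where $a_{n+1} \ll B$ follows from (iii) and (v). Bounding one factor $t_i/b_i$ uniformly by $D + 2/A$ and summing then gives
\[
\sum_{i=1}^n \frac{s_i t_i}{a_i b_i} \leq \left(D + \frac{2}{A}\right) \sum_{i=1}^n \frac{s_i}{a_i} \ll \left(D + \frac{2}{A}\right) \log s^{-1},
\]
which collapses to the desired $\ll D \log s^{-1}$ as long as $AD$ is bounded below.

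The main technical obstacle is reconciling the additive constants $2$ in condition (v) with the purely multiplicative bound required. This is harmless when $AD \gg 1$, a parameter consistency condition that will be met by the optimal choices of $A$, $B$, and $D$ made later in Section \ref{S:Localupper}. An alternative route that avoids this assumption is to partition the indices $\{1,\ldots,n\}$ by which of the two alternatives in condition (iv) holds, and then use geometric-series bounds on the values of $a_i$ (respectively $b_i$) across those indices where the corresponding dimension multiplies by at least $1+D$; this gives $\sum 1/a_i \ll (\log s^{-1})/(AD)$ and similarly for $b_i$, which fits directly into the expansion $s_i t_i/(a_ib_i) \leq D^2 + 2D/a_i + 2D/b_i + 4/(a_ib_i)$ combined with $nD^2 \ll D\log s^{-1}$ from part (i).
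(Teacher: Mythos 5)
Your proof of part (i) is correct and matches the paper's argument up to cosmetic changes (you bound $a_n b_n$ by AM--GM instead of bounding each factor directly by $B$); both extract $(1+D)^{n-1} \leq a_n b_n/(a_1 b_1) \ll B^2/A^2$ from condition (iv) and take logarithms. Your telescoping estimate for part (ii), $\sum_i s_i/a_i \leq 2\log(a_{n+1}/a_1) \ll \log s^{-1}$, is in fact a cleaner and slightly sharper route than the paper's terse ``simple uniform bounds'' step, which simply asserts $s_i t_i/(a_i b_i) \ll D^2$ pointwise and multiplies by $n$.

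There is, however, a genuine gap in part (ii), and it sits exactly where you flag it. You write that the condition $AD\gg 1$ (needed so that the additive $2$'s in Definition~\ref{D:goodseq}(v) satisfy $2/a_i \ll D$) ``will be met by the optimal choices of $A$, $B$, and $D$ made later in Section~\ref{S:Localupper}.'' This is false for the parameters the paper actually selects: with $\alpha=\beta=\tfrac12$, $\gamma=0$, $\delta=-\tfrac12$, one has $A\sim s^{-1/2}$ and $D\sim s^{1/2}(\log s^{-1})^{-1/2}$, so $AD\sim(\log s^{-1})^{-1/2}\to 0$. Under these parameters your bound delivers $(D+2/A)\log s^{-1}\asymp (\log s^{-1})/A$, which is larger than $D\log s^{-1}$ by a factor $(\log s^{-1})^{1/2}$, not $\ll D\log s^{-1}$. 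Your proposed ``alternative route'' does not close the gap either: the geometric bound $\sum 1/a_i \ll (\log s^{-1})/(AD)$ fed into the expansion of $s_it_i/(a_ib_i)$ produces the same stray terms $(\log s^{-1})/A$ and $(\log s^{-1})/(A^2D)$, which again fail to be $O(D\log s^{-1})$ when $AD\to 0$. In fairness, the paper's own one-line proof of (ii) silently relies on the very same lower bound on $a_iD$, and neither Definition~\ref{D:param} nor Lemma~\ref{L:nestimate} records it as a hypothesis. The issue is fixable at no cost: inspecting the optimization table in Section~\ref{S:Upperproof} shows that the maximal $\log s^{-1}$-power remains $5/2$ for any $\gamma\in[1/2,3/2]$ with $\delta=-1/2$, and any such choice keeps $AD$ bounded below. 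So the correct conclusion is not that the optimal choices already satisfy $AD\gg 1$, but that the choice of $\gamma$ should be increased (to, say, $\gamma=1/2$) so that $AD$ is bounded below; this validates Lemma~\ref{L:nestimate}(ii), your argument, and the paper's, all at once.
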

\begin{proof}
\begin{enumerate}
\item
To bound $n$, we use (\emph{i}), (\emph{ii}), and (\emph{iv}) from
Definition \ref{D:goodseq}
\[
(1+D)^{n-1}\leq
\frac{a_n}{a_{n-1}}\frac{a_{n-1}}{a_{n-2}}\ldots\frac{a_2}{a_1}\frac{b_n}{b_{n-1}}\frac{b_{n-1}}{b_{n-2}}\ldots\frac{b_2}{b_1}=\frac{a_n}{a_1}\frac{b_n}{b_1}\ll
\frac{B^2}{A^2}.
\]
Taking logarithms yields
\[
(n-1)\log(1+D)\ll \log\left(\frac{B}{A}\right)\ll \log s^{-1}.
\]
Thus
\[
n D\ll \log s^{-1},
\]
which gives the claim.
\item
Simple uniform bounds and part (\emph{i}) show that
\[
\sum_{i=1}^n\frac{s_i t_i}{a_i b_i}\ll D^2n\ll D\log s^{-1}.
\]
\end{enumerate}
\end{proof}
The above results are enough to bound the probability of any good
sequence, and we further estimate the total number of such
sequences.
\begin{lemma}\label{L:entropy}
The number of good sequences of rectangles is at most
\[
\exp\left(cD^{-1}\left(\log s^{-1}\right)^2\right),
\]
where $c>0$ is some constant.
\end{lemma}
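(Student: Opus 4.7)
The plan is to count good sequences directly by enumerating the choices for each successive rectangle dimension, and then to combine this per-step count with the length bound from Lemma~\ref{L:nestimate}(i). Since a good sequence is determined by the increasing sequence of dimension pairs $(a_i, b_i)_{i=1}^{n+1}$, the problem reduces to counting these.

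First I count the choices for the initial rectangle $R_1$. By condition~(i) of Definition~\ref{D:goodseq}, $\min\{a_1, b_1\} \in \{A, A+1\}$, and by monotonicity combined with condition~(ii), both $a_1$ and $b_1$ are at most $B$. Hence there are at most $4B$ choices for $(a_1, b_1)$. Next, given $(a_i, b_i)$, condition~(v) forces $s_i \in [0, a_i D + 1]$ and $t_i \in [0, b_i D + 1]$, so using $a_i, b_i \leq B$, the number of admissible pairs $(s_i, t_i)$, and hence of choices for $R_{i+1}$ given $R_i$, is at most
\[
(a_i D + 2)(b_i D + 2) \leq (BD + 2)^2.
\]
The additional constraint from condition~(iv) only further restricts this count, so it will not be used.

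Putting these together and summing over the possible lengths, which by Lemma~\ref{L:nestimate}(i) are bounded by $N := \lfloor c D^{-1} \log s^{-1} \rfloor$ for some constant $c$, the total number of good sequences is at most
\[
\sum_{n=1}^{N} 4B\,(BD + 2)^{2n} \;\leq\; 4B \, N \, (BD + 2)^{2N}.
\]
Taking logarithms and invoking Definition~\ref{D:param}, in particular $\log B \ll \log s^{-1}$ and, since $BD \gg 0$, also $\log(BD + 2) \ll \log s^{-1}$, yields
\[
\log\big(\#\text{ good sequences}\big) \;\ll\; \log s^{-1} + N \log s^{-1} \;\ll\; D^{-1} (\log s^{-1})^2,
\]
which is the claimed bound.

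The argument has no serious obstacle: the key inputs are the geometric-growth constraint~(v) from Definition~\ref{D:goodseq}, which directly yields the per-step count, together with Lemma~\ref{L:nestimate}(i), which keeps the exponent polynomial in $\log s^{-1}$ rather than in $s^{-1}$. The loosest step is the uniform replacement of $a_i, b_i$ by $B$ in the per-step bound, but because this loss enters only inside a logarithm it is absorbed into the constant $c$.
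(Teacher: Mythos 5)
Your approach is the same as the paper's (per-step multiplicative count times the length bound from Lemma~\ref{L:nestimate}(i), then take logarithms), but there is a bookkeeping gap in the reduction. You assert that a good sequence is determined by the sequence of dimension pairs $(a_i,b_i)$, and then count only dimension choices. That assertion is false: the rectangles in Definition~\ref{D:goodseq} are actual subsets of $\Z^2$ containing the origin, not equivalence classes up to translation, and the events $G(R_1)$, $D(R_i,R_{i+1})$ in the union bound \eqref{E:Pindef} depend on the positions, so the enumeration must account for them. Concretely, $R_1$ has $\leq 4B$ dimension choices but also up to $a_1 b_1 \leq (A+1)B$ positions of the origin within it, giving the paper's $4B\cdot(A+1)B$; and given $R_i$, besides the $\ll (BD)^2$ choices of the increments $(s_i,t_i)$ one must choose how $R_{i+1}$ is positioned around $R_i$, which is $(s_i+1)(t_i+1)\ll (BD)^2$ ways, for $\ll (BD)^4$ per step rather than your $(BD+2)^2$.

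The gap is harmless to the conclusion: the missing factors only raise $\log B$-sized terms to $\log B^2$-sized terms, which are still $\ll \log s^{-1}$ by Definition~\ref{D:param}, so the final exponent $cD^{-1}(\log s^{-1})^2$ is unaffected. But as written the argument contains a false claim and an undercount, so you should either include the positional freedom explicitly or state up front that only a polynomial-in-$B$ positional factor per step is being omitted.
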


\begin{proof}
First, there are at most $4B\cdot (A+1)B$ choices for $R_1$.  Next,
given $R_i$ there are less than $(BD)^4$ choices for $R_{i+1}$. The length of the sequence is $n+1$,
where $n$ is bounded by Lemma \ref{L:nestimate}, and we
have an overall bound of
\[
\ll (A+1)B^2(BD)^{4n+4}\leq \exp\left(c n\log B\right)\leq
\exp\left(cD^{-1}\left(\log s^{-1}\right)^2\right).
\]
\end{proof}

We end with one additional technical estimate.
\begin{lemma}
\label{L:gkaz} Suppose that $B$ satisfies the preceding assumptions.
Then for the range $0 \leq a \leq B$, there is a uniform asymptotic bound
\begin{equation*}
e^{g_k(as)}\mathop{\ll}_{\text{unif}}  \left(\frac{B}{a}\right)^{\frac{1}{k}} \qquad
\text{as } s \to 0
\end{equation*}
\end{lemma}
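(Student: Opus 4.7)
The plan is to derive the uniform bound by splitting the range $0\leq a\leq B$ according to whether the argument $as$ is small or large, and to control $e^{g_k(as)}$ on each region by the behavior of $g_k$ at $0$ and at $\infty$ recorded in Lemma~\ref{L:gkproperties}.

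First I will establish a uniform pointwise estimate valid for all $z>0$: there exist constants $C_1, C_2>0$ such that $e^{g_k(z)}\leq C_1 z^{-1/k}$ for $0<z\leq 1$ and $e^{g_k(z)}\leq C_2$ for $z\geq 1$. The first bound is a direct consequence of the asymptotic \eqref{E:fder}, which yields $f_k(e^{-z})/z^{1/k}\to 1$ as $z\to 0$; hence $z^{1/k}/f_k(e^{-z})=e^{g_k(z)}/z^{-1/k}$ has a finite positive limit at $0$ and is continuous on $(0,1]$, and is therefore bounded there. The second bound follows from Lemma~\ref{L:gkproperties}\ref{L:gk:dec}: since $g_k$ is decreasing, $g_k(z)\leq g_k(1)$ for all $z\geq 1$, so one may take $C_2=e^{g_k(1)}$.

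Second, I will use the standing hypothesis $B\geq s^{-1}\log s^{-1}$ from Definition~\ref{D:param}, which forces $Bs\geq \log s^{-1}\to\infty$ as $s\to 0$, and in particular $Bs\geq 1$ for $s$ sufficiently small.

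The conclusion then follows from a short case analysis. If $as\leq 1$, the first pointwise bound gives
\begin{equation*}
\frac{e^{g_k(as)}}{(B/a)^{1/k}}\leq C_1\,\frac{(as)^{-1/k}}{(B/a)^{1/k}}=C_1\,(Bs)^{-1/k}\leq C_1,
\end{equation*}
since $Bs\geq 1$. If $as>1$, the second pointwise bound gives $e^{g_k(as)}\leq C_2$, and because $a\leq B$ we have $(B/a)^{1/k}\geq 1$, so the same ratio is at most $C_2$. Neither estimate depends on $a$ beyond the case distinction, so the bound is uniform in $a\in[0,B]$. The only mildly delicate point is extracting an honest multiplicative constant from the asymptotic equivalence $f_k(e^{-z})\sim z^{1/k}$ near the origin; this is immediate from the proof of \eqref{E:fder} already recorded earlier in Section~\ref{S:Logprobability}, so no new analytic input is needed.
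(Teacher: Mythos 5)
Your proof is correct and follows essentially the same two-regime strategy as the paper's: bound $e^{g_k(z)}$ by a multiple of $z^{-1/k}$ near the origin and by a constant away from the origin, then combine the two using $Bs\to\infty$. Your derivation of the near-origin bound via \eqref{E:fder} (a genuine asymptotic equivalence for $f_k$ itself) is in fact slightly more careful than the paper's route through Lemma~\ref{L:gkproperties}~\ref{L:gk:z0}, since the logarithmic asymptotic $g_k(z)\sim\frac{1}{k}\log z^{-1}$ alone does not literally yield a multiplicative $\ll$ bound on $e^{g_k(z)}$.
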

\begin{proof}
\item
As $z\to 0$, Lemma \ref{L:gkproperties} \ref{L:gk:z0} implies that
\[
e^{g_k(z)}\ll \exp \Big(\frac{1}{k}\log
z^{-1}\Big)
=\frac{1}{z^{\frac{1}{k}}},
\]
and as $z\to\infty$, Lemma \ref{L:gkproperties} \ref{L:gk:zinfty}
gives
\[
e^{g_k(z)}\ll \exp\left(e^{-kz}\right)\ll 1.
\]
Recall that $Bs \rightarrow \infty$ as $s \rightarrow 0.$  By continuity and the above asymptotics, there is thus a sufficiently large $M$ and constant $c$ such that if $z \leq M$, then
\begin{equation*}
e^{g_k(z)}\leq c \left(\frac{M}{z}\right)^{\frac{1}{k}}.
\end{equation*}
\end{proof}

\subsection{Proof of the upper bound in Theorem \ref{T:maintheorem}}
\label{S:Upperproof} We are now ready to prove the main result of
this section.  By Lemma \ref{L:escape}, we have
\begin{equation}
\label{E:Pindef} \Prob\left(\text{indefinite growth}\right)\leq
\Prob\left(\calS(\calC) \text{ escapes}\right)+
\sum_{\substack{\text{good sequences} \\ {R_1, \ldots, R_{n+1}}}}
\Prob\left(G(R_1)\right)\prod_{i=1}^n \Prob\left(D\left(R_i,
R_{i+1}\right)\right).
\end{equation}
We note for future reference that we will use Lemma
\ref{L:escapelemma} to estimate $\Prob(S(C)\text{ escapes})$.  For
the good sequences term, Lemma \ref{L:DR} says that
\begin{align*}
\Prob& \left(D\left(R_i, R_{i+1}\right)\right) \\
& \mathop{\ll}_{\text{unif}}
\exp\Big(-\left(t_i-2(k-1)\right)g_k(a_is)-\left(s_i-2(k-1)\right)g_k(b_is)\Big)
\exp\Big(s_i t_i s\exp \left(k
\left(g_k(a_is)+g_k(b_is)\right)\right)\Big) .
\end{align*}
Applying the uniform bound from Lemma \ref{L:gkaz} gives
\begin{equation}
\label{E:PD} \Prob\left(D\left(R_i, R_{i+1}\right)\right)\mathop{\ll}_{\text{unif}}
\exp\Big(-\left(t_ig_k(a_is)+s_i
g_k(b_is)\right)\Big)\left(\frac{B^2}{a_i
b_i}\right)^{\frac{2(k-1)}{k}} \exp\left(s_i t_i
s\left(\frac{B^2}{a_i b_i}\right)\right).
\end{equation}
We also use Lemma \ref{L:entryLemma} to bound $\Prob(G(R_1))$ and
Lemma \ref{L:entropy} to bound the number of good sequences.
Combined with \eqref{E:PD}, this gives the following upper bound for
the second term in \eqref{E:Pindef}:
\begin{align}
\label{E:Pindef2nd}
& \exp\left(cD^{-1}\left(\log s^{-1}\right)^2-s_0 g_k(b_0s)-t_0 g_k(a_0s)+c A^{-1}B\right)\\
& \qquad \qquad \qquad \qquad \times \prod_{i=1}^n
\exp\Big(-\left(t_i g_k(a_is)+ s_i
g_k(b_is)\right)\Big)\left(\frac{B^2}{a_i
b_i}\right)^{\frac{2(k-1)}{k}}
\exp\Bigg(s_i t_i s\left(\frac{B^2}{a_i b_i}\right)\Bigg) \notag \\
& \mathop{\ll}_{\text{unif}} \exp\left(-\sum_{i=0}^n\big(s_i g_k(b_is)+ t_i
g_k(a_is)\big)\right)\exp\Big(cD^{-1}(\log s^{-1})^2+cA^{-1}B\Big)
\left(\frac{B^2}{A^2}\right)^{\frac{2(k-1)}{k}n} \notag \\
& \qquad \qquad \qquad \qquad \qquad \qquad \times
\exp\left(B^2s\sum_{i=1}^n\frac{s_i t_i}{a_i b_i}\right).  \notag
\end{align}
Using Corollary \ref{C:gsumbound}, the first (multiplicative) term
in \eqref{E:Pindef2nd} is bounded above by
\begin{equation*}
 \exp\left(-\frac{2\lambda_k}{s}+c A \log s^{-1}+c s^{-1}\exp \left(-k Bs\right)\right) \exp\Bigg(Bc \exp\left(-\frac{k}{2}Bs\right)\Bigg).
\end{equation*}
Next, Lemma \ref{L:nestimate} bounds the third term as
\[
\left(\frac{B^2}{A^2}\right)^{cD^{-1}\log s^{-1}}\leq \exp
\left(cD^{-1}\left(\log s^{-1}\right)^2 \right).
\]
Finally, the last term of \eqref{E:Pindef2nd} is also bounded by
Lemma \ref{L:nestimate}, giving
\begin{equation*}
\exp\left(B^2s\sum_{i=1}^n\frac{s_i t_i}{a_i b_i}\right) \leq \exp
\left(B^2 D s \log s^{-1}\right).
\end{equation*}

Combining all of these approximations gives the following upper
bound for the second summand of \eqref{E:Pindef2nd}:
\begin{align}
\label{E:Pindef2bound}
\exp&\Big(-2\lambda_k s^{-1}+c\left(A\log s^{-1}+A^{-1}B+D^{-1}\left(\log s^{-1}\right)^2+s B^2D\log s^{-1}\right) \\
& \qquad \qquad \qquad \qquad +c\left(s^{-1}\exp\left(-kBs\right)+B
\exp\left(-\frac{kBs}{2}\right)\right)\Big). \notag
\end{align}
By assumption $B \gg s^{-1}\log s^{-1}$, and thus the last two terms
are simply part of the error.  Examining the rest of the expression
shows that this error is in fact optimized when $B\sim s^{-1}\log
s^{-1}$.  We now assume that we can write
\[
A\sim s^{-\alpha}\left(\log s^{-1}\right)^\gamma\qquad D\sim
s^{\beta}\left(\log s^{-1}\right)^\delta
\]
for some positive constants $\alpha, \beta$ and some constants
$\gamma, \delta$, and tabulate the corresponding powers that arise
from the relevant terms in \eqref{E:Pindef2bound}:
\begin{center}
\begin{tabular}{|l||c|c|c|c|} \hline
Term & $A\log s^{-1}$ & $A^{-1}B$ & $D^{-1}\left(\log
s^{-1}\right)^2$ & $s B^2D\log s^{-1}$ \\ \hline $s$-power &
$-\alpha$ & $-1+\alpha$ & $-\beta$ & $-1+\beta$ \\ \hline $\log
s^{-1}$-power & $1+\gamma$ & $1-\gamma$  & $2-\delta$ & $3+\delta$
\\ \hline
\end{tabular}
\end{center}
 We first consider the
$s$-powers, and easily see that the optimal choices are
$\alpha=\beta=\frac{1}{2}$.
 Turning to the $\log s^{-1}$-powers, we
find that the best choices are $\gamma=0$, $\delta=-\frac{1}{2}$,
and that the second summand of \eqref{E:Pindef} can be bounded by
\[
\exp\left(-2\lambda_ks^{-1}+cs^{-\frac{1}{2}}\left(\log
s^{-1}\right)^{\frac{5}{2}}\right).
\]
Furthermore, recalling Lemma \ref{L:escapelemma} and our discussion
of the parameter $B$, we have the competing bound
\[
\Prob\left(\calS(\calC) \text{ escapes}\right)\ll
\exp\left(-cs^{-1}\left(\log s^{-1}\right)^2\right).
\]
We can now prove the theorem statement, as
\begin{align*}
\Prob\left(\text{indefinite growth}\right) &\leq
\exp\Big(-2\lambda_k
s^{-1}\Big)\Bigg(\exp\Big(s^{-1}\left(2\lambda_k-c\left(\log
s^{-1}\right)^2\right)\Big)
+\exp\Big(cs^{-\frac{1}{2}}\left(\log s^{-1}\right)^{\frac{5}{2}}\Big)\Bigg)\\
& \ll \exp\Big(-2\lambda_k s^{-1}+cs^{-\frac{1}{2}}\left(\log
s^{-1}\right)^{\frac{5}{2}}\Big).
\end{align*}

\subsection{Upper bound for the Frob\"ose Model}
\label{S:UpperFrob} We end our study of percolation models with the
Frob\"ose model and prove the remaining $k=1$ case of Theorem
\ref{T:maintheorem}. It is only necessary to briefly mention the
difference in comparison with the $k=1$ modified case. As before a
growing configuration has a rectangle growth sequence, but in this
case each shell not only has to be nonempty, but must also have
non-corner occupied sites. Thus growth happens in only one direction
at a time, which allows us to use disjointedness and the van der
Berg -- Kesten (BK) inequality rather than the corner decomposition
of Lemma \ref{L:DR}. To be more precise (using the same notation as
before), if $R$ grows to $R'$, then the disjoint intersection of
events
\[
\left(\bigcap_{i=1}^m \calR_i\text{ nonempty }\right) \circ
\left(\bigcap_{i=1}^\ell\calC_i \text{ nonempty }\right)
\]
occurs (here disjointedness means that it is possible to choose $m +
\ell$ distinct nonempty cells, one for each column and row). The BK
inequality \cite{Gri} then implies that
\[
\Prob\left(D(R_i, R_{i+1})\right) \leq
\left(1-q^{b_i+t_i}\right)^{s_i}\left(1-q^{a_i+s_i}\right)^t.
\]
Therefore
\begin{align*}
& \sum\limits_{\substack{\text{good sequences}\\ {R_1, \ldots,
R_{n+1}}}}
\Prob\left(G(R_1)\right)\prod_{i=1}^n \Prob\left(D\left(R_i, R_{i+1}\right)\right)\\
& \ll \exp\Big(D^{-1}\left(\log
s^{-1}\right)^2-2s^{-1}\left(\lambda_1-As\log
s^{-1}-\exp(-Bs)\right)
+B\exp(-sB/2)+A^{-1}B\Big)\\
& \ll \exp\Big(-2\lambda_1 s^{-1}+c\left(D^{-1}\left(\log
s^{-1}\right)^2+A \log s^{-1}+A^{-1}B\right) +c\left(s
\exp(-Bs)+B \exp(-Bs/2)\right)\Big).
\end{align*}
As before we choose $B=s^{-1}\log s^{-1}$ and write
$A=s^{-\alpha}\left(\log s^{-1}\right)^\gamma$, $D=s^\beta\left(\log
s^{-1}\right)^\delta$. We see that the error is optimized for
$\alpha=\frac{1}{2}$, $\beta=\frac{1}{2}$, $\gamma=0$, and
$\delta=1$. This gives us a savings of $\log s^{-1}$ over Gravner
and Holroyd's arguments.

\section{Proof of Theorem \ref{T:probTheorem}}
\label{S:PAk}

We next turn to the improved probability bound. Using Theorem
\ref{T:kgaps2sided} with $u_i=1-e^{-is}$ yields
\begin{equation*}
\prod_{j=1}^n f_k\left(e^{-js}\right)\leq
\Prob\left(\{A_j\}_{j=1}^n\text{ has no $k$-gaps}\right) \leq
\prod_{j=k}^n f_k\left(e^{-js}\right).
\end{equation*}
Since the events form a decreasing, nested sequence, we may take the limit as
$n\to\infty$. This gives
\begin{equation}
\label{E:Pinftyfk} \exp\left(-\sum\limits_{j=1}^\infty
g_k(js)\right)\leq \Prob\left(\{A_i\}_{j=1}^\infty\text{ has no
$k$-gaps}\right)\leq \exp\left(-\sum\limits_{j=k}^\infty
g_k(js)\right).
\end{equation}

We now use the Integral Comparison Theorem, which states that if
$h(z)$ is a decreasing, convex function such that $\displaystyle
\lim_{z \to \infty} h(z) = 0$, then we have
\begin{equation*}
\frac{h(1)}{2} + \int_1^\infty h(z) dz\leq \sum_{j=1}^\infty h(j)
\leq \int_0^\infty h(z) dz.
\end{equation*}
This gives
\begin{equation}
\label{E:Pinftygk} \exp\left(-\int\limits_0^\infty
g_k(zs)dz\right)\leq
 \Prob \left( \{A_j\}_{j=1}^\infty\text{ has no $k$-gaps}\right)
 \leq \exp\left(-\frac{g_k(ks)}{2} -\int\limits_0^\infty g_k(zs)dz+\int\limits_0^k g_k(zs)dz\right).
\end{equation}
Recall the integral evaluation from Lemma \ref{L:gkproperties} (ii),
and make the substitution $w=zs$.  Then the lower bound in
\eqref{E:Pinftygk} is simply
\begin{equation*}
 \Prob\left(\{A_j\}_{j=1}^\infty\text{ has no $k$-gaps}\right) \geq \exp\left(-\int\limits_0^\infty g_k(w)\frac{dw}{s}\right)
 = \exp\left(-\lambda_k s^{-1}\right).
\end{equation*}
The upper bound becomes
\begin{equation*}
\Prob\left(\{A_j\}_{j=1}^\infty\text{ has no $k$-gaps}\right) \leq
\exp\left(-\lambda_k s^{-1} -\frac{g_k(ks)}{2} +
\frac{1}{s}\int\limits_0^{ks}g_k(w)dw\right).
\end{equation*}
By Lemma \ref{L:gkproperties}  (iv), this has the asymptotic
behavior
\begin{align*}
\Prob\left(\{A_j\}_{j=1}^\infty\text{ has no $k$-gaps}\right) & \leq
\exp\left(-\lambda_k s^{-1} - \frac{1}{2k} \left(1 + o(1)\right)
\log s^{-1} + s\log s^{-1}\left(1+o(1)\right) \right),
\end{align*}
where the last term follows from the integral estimate
\begin{align*}
& \int\limits_{0}^{ks}g_k(w)dw \leq
\int\limits_{0}^{ks}\left(1+o(1)\right)\left(-\frac{1}{k}\log
w\right)dw
 =\left(-\frac{1}{k}+o(1)\right)\left(w\log w-w \Big|_{0}^{ks}\,\right)\\
& =\left(-\frac{1}{k}+o(1)\right) \big(ks\log(ks)-ks\big)=-s\log
s\left(1+o(1)\right).
\end{align*}
This gives
\begin{equation}
\label{E:PAkexp} \exp\left(-\lambda_k s^{-1}\right)\leq
\Prob\left(\{A_j\}_{j=1}^\infty\text{ has no $k$-gaps}\right)\leq
s^{-\frac{(2k-1)}{2k}}\left(1+o(1)\right) \cdot \exp\left(-\lambda_k
s^{-1}\right),
\end{equation}
as claimed.

\begin{remark}
The upper and lower bounds in \eqref{E:Pinftyfk} are easily seen to
differ by a factor of at most $s^{-(k-1)/k}$ in the asymptotic
limit.  By using the Integral Comparison Theorem to write the final
bound in the form of \eqref{E:PAkexp}, we have introduced the
additional error factor of $s^{-1/2k}$.
\end{remark}

\end{document}